\documentclass{amsart}
\usepackage[T1]{fontenc}
\usepackage{mathtools}
\usepackage{xcolor}
\usepackage{amsmath,amssymb}
\usepackage{amsthm}
\usepackage{thmtools}
\usepackage{etoolbox}
\usepackage{tikz}
\usepackage{url}
\usepackage[backend=biber,style=alphabetic, maxbibnames=99, maxalphanames=6, url=false, doi=false, eprint=false]{biblatex}
\usepackage{geometry}
\usepackage{hyperref}
\hypersetup{
  hypertexnames = false, 
  colorlinks = true,
  linkcolor = blue,
  citecolor = magenta,
  pdftitle={Grothendieck Weights on Permutohedral Varieties and Matroids},
  pdfauthor={Yiyu Wang}
}
\usepackage{cleveref}

\usepackage{newpxtext}

\title{Grothendieck Weights on Permutohedral Varieties and Matroids}
\author{Yiyu Wang}
\address{Department of Mathematics, The Ohio State University, 231 W. 18th Ave., Columbus, OH 43210}
\email{wang.20315@osu.edu}
\date{Sep 6, 2026}

\declaretheorem[name=Theorem,numberwithin=section]{theorem}
\declaretheorem[name=Lemma,sibling=theorem]{lemma}
\declaretheorem[name=Proposition,sibling=theorem]{proposition}
\declaretheorem[name=Corollary,sibling=theorem]{corollary}
\declaretheorem[name=Remark,sibling=theorem,style=remark]{remark}
\declaretheorem[name=Definition,sibling=theorem,style=definition]{definition}
\declaretheorem[name=Example,sibling=theorem,style=definition]{example}

\makeatletter
\expandafter\patchcmd\csname\string\thmt@restatable\endcsname
  {\thmt@trivialref{thmt@@#3}{??}}
  {\protect\ref{thmt@@#3}}
  {}{\PackageError{manuscript}{Could not link restated theorem numbers}{}}
\makeatother

\crefname{theorem}{Theorem}{Theorems}
\Crefname{theorem}{Theorem}{Theorems}
\crefname{lemma}{Lemma}{Lemmas}
\Crefname{lemma}{Lemma}{Lemmas}
\crefname{proposition}{Proposition}{Propositions}
\Crefname{proposition}{Proposition}{Propositions}
\crefname{corollary}{Corollary}{Corollaries}
\Crefname{corollary}{Corollary}{Corollaries}
\crefname{definition}{Definition}{Definitions}
\Crefname{definition}{Definition}{Definitions}
\crefname{example}{Example}{Examples}
\Crefname{example}{Example}{Examples}
\crefname{remark}{Remark}{Remarks}
\Crefname{remark}{Remark}{Remarks}
\crefname{section}{Section}{Sections}
\Crefname{section}{Section}{Sections}
\crefname{subsection}{Subsection}{Subsections}
\Crefname{subsection}{Subsection}{Subsections}
\crefname{equation}{Equation}{Equations}
\Crefname{equation}{Equation}{Equations}

\AddToHook{env/theorem/begin}{\crefalias{section}{theorem}}
\AddToHook{env/lemma/begin}{\crefalias{section}{lemma}}
\AddToHook{env/proposition/begin}{\crefalias{section}{proposition}}
\AddToHook{env/corollary/begin}{\crefalias{section}{corollary}}
\AddToHook{env/definition/begin}{\crefalias{section}{definition}}
\AddToHook{env/example/begin}{\crefalias{section}{example}}
\AddToHook{env/remark/begin}{\crefalias{section}{remark}}

\DeclareMathOperator{\Hom}{Hom}
\DeclareMathOperator{\GW}{GW}
\DeclareMathOperator{\rk}{rk}
\DeclareMathOperator{\rank}{rank}
\DeclareMathOperator{\gr}{gr}
\DeclareMathOperator{\MC}{MC}

\DeclareMathOperator{\ch}{ch}

\DeclareMathOperator{\relint}{Relint}
\DeclareMathOperator{\Star}{Star}
\DeclareMathOperator{\cone}{cone}
\DeclareMathOperator{\face}{face}
\newcommand{\sHom}{\mathcal{H}om}
\newcommand{\sref}{\succneqq} 
\newcommand{\C}{{\mathbb{C}}}
\newcommand{\R}{{\mathbb{R}}}
\newcommand{\Q}{{\mathbb{Q}}}
\newcommand{\Z}{{\mathbb{Z}}}
\newcommand{\PP}{{\mathbb{P}}}

\begin{document}

\begin{abstract}
  Grothendieck weights, introduced by Shah, are \(K\)-theoretic analogues of Minkowski weights on smooth toric varieties. We study Grothendieck weights on the permutohedral fan and prove two main results: a \emph{\(K\)-balancing condition} that characterizes Grothendieck weights by a finite system of linear equations, and an explicit \emph{product rule} for the ring structure. We apply this framework to matroids, giving a combinatorial characterization of Grothendieck weights on matroidal fans. As the main application, we compute the motivic Chern class of the hyperplane arrangement complement in its wonderful compactification and show that the result depends only on the matroid and not on the realization. This allows us to extend the definition of the motivic Chern class to all loopless matroids.
\end{abstract}

\maketitle

\setcounter{tocdepth}{1}
\tableofcontents
\section{Introduction}

%

Let \(X\) be a complete smooth toric variety with fan \(\Sigma\).
In \cite{FultonSturmfels1997}, Fulton and Sturmfels introduce the notion of \emph{Minkowski weights} and show that the Chow ring of a complete smooth toric variety is isomorphic to the ring of Minkowski weights. A Minkowski weight is a \(\Z\)-valued function on \(\Sigma\) satisfying a \emph{balancing condition}. They also give a ``fan displacement rule'' for computing the product of two Minkowski weights.

In \cite{Shah2022}, Shah introduces the \(K\)-theoretic analogue of this theory. For a cone \(\sigma\in\Sigma\), let \(x_\sigma = [\mathcal{O}_{V(\sigma)}]\in K(X)\) denote the \(K\)-class of the structure sheaf of the orbit closure \(V(\sigma)\). A \emph{Grothendieck weight} is a \(\Z\)-valued function \(g:\Sigma\to\Z\) such that \(\sum_\sigma c_\sigma g(\sigma)=0\) whenever \(\sum_\sigma c_\sigma x_\sigma=0\). The \(K\)-ring of \(X\) is isomorphic to the group \(\GW(\Sigma)\) of all Grothendieck weights via the map \(\alpha\mapsto g_\alpha\), where \(g_\alpha(\sigma)=\chi(\alpha\cdot x_\sigma)\) and \(\chi:K(X)\to\Z\) is the Euler characteristic; this map is in fact a ring isomorphism. Shah also proves a product rule for computing the product of two Grothendieck weights. Both results are obtained by applying the Riemann--Roch transform to lift the corresponding results for Minkowski weights to \(K\)-theory.

However, Shah's approach has two drawbacks. Because it proceeds via the Riemann--Roch transform, which is an isomorphism only over \(\Q\), the resulting coefficients are rational rather than integral. Moreover, these coefficients are given as constant terms of formal power series involving exponential functions, making them difficult to compute explicitly in practice.


In this paper, we derive explicit, purely combinatorial formulas for the \(K\)-balancing condition and the product rule on the permutohedral toric variety.

We briefly recall the combinatorics of the permutohedral fan.
Write \([n] = \{1,\ldots,n\}\).
The \emph{permutohedral fan} \(\Sigma_{[n]}\) is the normal fan of the permutohedron \(\Pi_n\);
its cones are indexed by flags of nonempty proper subsets
\(\mathcal{F}\colon \emptyset \subsetneq F_1 \subsetneq \cdots \subsetneq F_k \subsetneq [n]\).
We write \(\ell(\mathcal{F}) = k\) for the \emph{length} of \(\mathcal{F}\), and
\(\mathcal{F} \sref \mathcal{G}\) to mean that \(\mathcal{F}\) \emph{strictly refines} \(\mathcal{G}\),
i.e., every set of \(\mathcal{G}\) appears in \(\mathcal{F}\) and \(\mathcal{F}\ne\mathcal{G}\).

A flag \(\mathcal{G}\) is \emph{\(\{i,j\}\)-neutral} if for each \(G\in\mathcal{G}\),
either \(\{i,j\}\subseteq G\) or \(\{i,j\}\cap G=\emptyset\)
(i.e., \(i\) and \(j\) are never separated by a set of \(\mathcal{G}\)).

\begin{restatable}{theorem}{permutohedralBalancing}\label{thm:k-balancing-permutohedron}
    A function \(g:\Sigma_{[n]}\to \Z\) is a Grothendieck weight if and only if for each pair \(i\ne j\) and each \(\{i,j\}\)-neutral flag \(\mathcal{G}\),
    \[
        \sum_{\mathcal{F}\in \mathbf{S}_{ij}(\mathcal{G})}(-1)^{\ell(\mathcal{F})}g(\mathcal{F}) = \sum_{\mathcal{F}\in \mathbf{S}_{ji}(\mathcal{G})}(-1)^{\ell(\mathcal{F})}g(\mathcal{F}),
    \]
    where \(\mathbf{S}_{ij}(\mathcal{G})\) denotes the set of strict refinements \(\mathcal{F}\succneqq\mathcal{G}\) in which every new set \(H\in\mathcal{F}\setminus\mathcal{G}\) contains \(i\) but not \(j\).
\end{restatable}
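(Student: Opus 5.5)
Throughout, write \(x_{\mathcal F}=[\mathcal{O}_{V(\mathcal F)}]\) for the class attached to the cone indexed by the flag \(\mathcal F\). By definition, \(g\) is a Grothendieck weight exactly when it annihilates every linear relation among the \(x_{\mathcal F}\). So the plan is to produce a spanning set of these relations and show it consists precisely of the relations in the theorem. Concretely, for each pair \(i\ne j\) and each \(\{i,j\}\)-neutral flag \(\mathcal G\) we must show
\[
  \sum_{\mathcal F\in\mathbf S_{ij}(\mathcal G)}(-1)^{\ell(\mathcal F)}x_{\mathcal F}\;-\;\sum_{\mathcal F\in\mathbf S_{ji}(\mathcal G)}(-1)^{\ell(\mathcal F)}x_{\mathcal F}=0 \quad\text{in } K(X_{[n]}),
\]
and that these relations span the kernel of \(\Z^{\Sigma_{[n]}}\to K(X_{[n]})\).

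\textbf{Source of the relations.} I would start from the presentation of \(K\) of a smooth toric variety. Let \(\rho_S\) be the ray of the subset \(S\) and \(y_S=[\mathcal O_{V(\rho_S)}]=1-[\mathcal O(-D_S)]\). The ring \(K(X)\) is generated by the \(y_S\) subject to two kinds of relations:
\begin{itemize}
\item Stanley--Reisner relations: \(\prod y_S=0\) for non-flags;
\item linear relations: \(\prod_{\langle m,\rho_S\rangle>0}(1-y_S)^{\langle m,\rho_S\rangle}=\prod_{\langle m,\rho_S\rangle<0}(1-y_S)^{-\langle m,\rho_S\rangle}\) for \(m\in M\).
\end{itemize}
Take \(m=e_i-e_j\). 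Then \(\langle m,\rho_S\rangle=1\) if \(i\in S\not\ni j\), it is \(-1\) if \(j\in S\not\ni i\), and it is \(0\) otherwise. This gives
\[
  \prod_{i\in S\not\ni j}(1-y_S)=\prod_{j\in S\not\ni i}(1-y_S).
\]
Multiply this by \(x_{\mathcal G}=\prod_{G\in\mathcal G}y_G\) and expand. Products \(y_{H_1}\cdots y_{H_r}x_{\mathcal G}\) vanish unless \(\mathcal G\cup\{H_1,\dots,H_r\}\) is a flag; when it is a flag, the product equals \(x_{\mathcal F}\) by transversality. The sign of each term is \((-1)^{r}=(-1)^{\ell(\mathcal F)-\ell(\mathcal G)}\). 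This yields exactly the displayed relation, including the empty refinement on both sides, which cancels. So necessity follows: every Grothendieck weight satisfies the theorem.

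\textbf{Where neutrality is needed.} If \(\mathcal G\) separates \(i\) and \(j\), the same manipulation still gives a valid relation, but it is redundant. I would check that such relations are generated by the neutral ones. The argument is to restrict to the star of \(\mathcal G\). That star is a product of smaller permutohedral fans, one for each block \(G_{t+1}\setminus G_t\). Only characters \(m\) supported inside a single block act nontrivially on the star, and \(e_i-e_j\) with \(i,j\) in different blocks can be rewritten in terms of such characters modulo \(M\)-relations coming from \(x_{\mathcal G}\) itself. Alternatively, one can simply observe that sufficiency only needs the neutral family.

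\textbf{Sufficiency: the main obstacle.} We must show that the neutral relations span all linear relations among the \(x_{\mathcal F}\).

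My first attempt is a rank count. The classes \(x_{\mathcal F}\) for \(\mathcal F\) ranging over \(\Sigma_{[n]}\) span \(K(X)\), which is free of rank \(n!\), the number of maximal cones. So the relation lattice has rank \(|\Sigma_{[n]}|-n!\). The goal is to show that, modulo the given relations, every \(x_{\mathcal F}\) reduces integrally to a combination of a fixed set of \(n!\) flags. For this I would use a shelling (or Bia\l{}ynicki-Birula) order on the permutohedral fan, giving a basis of the form \(\{x_{\mathcal F_w}\}\) indexed by permutations \(w\), with \(\mathcal F_w\) determined by the descent set of \(w\). This is an analogue of the Klyachko/Gelfand--Tsetlin type basis.

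The reduction proceeds by induction on \(\ell(\mathcal F)\) decreasing and on a term order. Given \(\mathcal F\) not of the form \(\mathcal F_w\), choose adjacent elements \(i,j\) that witness a "non-descent" inside a block of \(\mathcal F\). Apply the relation with \(\mathcal G=\mathcal F\). This is legitimate because \(i,j\) lie in the same block, so \(\mathcal F\) is \(\{i,j\}\)-neutral. The relation expresses the length-\((\ell+1)\) refinement separating \(i\) before \(j\) in terms of the one separating \(j\) before \(i\), plus longer terms. Because all coefficients are \(\pm1\), the reduction stays integral. The spanning set \(\{x_{\mathcal F_w}\}\) together with the known rank \(n!\) then forces these relations to generate the whole relation lattice, integrally, since \(K(X)\) is free.

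The delicate point is arranging the term order so that every non-basis flag is actually reducible and the process terminates. I expect this combinatorial bookkeeping to be the hardest step.
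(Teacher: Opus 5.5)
Your necessity argument is the paper's own and is fine. Your aside about non-neutral \(\mathcal G\) is wrong: if some \(G\in\mathcal G\) separates \(i\) from \(j\), the expansion contains \(y_G^2\), which is not a class \(x_{\mathcal F}\). You never use it, though.

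The gap is in sufficiency, and it is more than bookkeeping. First, the reduction step as written cannot rewrite \(x_{\mathcal F}\). The relation attached to \(\mathcal G=\mathcal F\) involves only strict refinements of \(\mathcal F\), so \(x_{\mathcal F}\) does not occur in it. Take the natural convention that the blocks of \(\mathcal F_w\) are the maximal increasing runs of \(w\). Then a flag \(B_1|\cdots|B_{k+1}\) fails to be a descent flag exactly when \(\max B_t<\min B_{t+1}\) for some \(t\). The witnesses \(i=\max B_t\) and \(j=\min B_{t+1}\) lie in \emph{different} blocks of \(\mathcal F\), so \(\mathcal F\) is not \(\{i,j\}\)-neutral. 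The relation you need is the one for the coarsening \(\mathcal G=\mathcal F\setminus\{F_t\}\), in which \(\mathcal F\in\mathbf S_{ij}(\mathcal G)\) has the minimal length \(\ell(\mathcal G)+1\).

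Second, even with the right \(\mathcal G\), the minimal-length part of that relation is not one flag against one flag. If \(i,j\in G_{s+1}\setminus G_s\), it is \(\sum_A x_{\mathcal G\cup\{G_s\cup A\cup\{i\}\}}-\sum_A x_{\mathcal G\cup\{G_s\cup A\cup\{j\}\}}\), with \(A\) ranging over all subsets of \((G_{s+1}\setminus G_s)\setminus\{i,j\}\). You would need a term order in which every non-descent flag is the leading term of such a relation, and a proof that the rewriting terminates in the \(n!\) descent flags. That amounts to proving two things: these leading parts span all relations among orbit classes in each graded piece of the Chow ring, and descent flags give a basis. This is the actual content of the theorem, and you leave it undone.

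The paper avoids any explicit basis. It orders the relations by \(\ell(\mathcal G)\), which makes the relation matrix block upper triangular. The diagonal part of the row for \((e_i-e_j,\mathcal G)\) is exactly the Fulton--Sturmfels codimension-one balancing condition along \(\sigma_{\mathcal G}\) in direction \(e_i-e_j\). This uses that \(\langle e_i-e_j,e_H\rangle\in\{0,\pm1\}\), and that the vectors \(e_{\min B}-e_u\), for \(u\) in a block \(B\) of \(\mathcal G\), form a basis of \(M(\sigma_{\mathcal G})\). By Fulton--Sturmfels, these diagonal blocks span all relations among the \([V(\sigma)]\) in \(A^*(X)_\Q\). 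So the rank of the relation matrix is at least \(|\Sigma_{[n]}|-\dim A^*(X)_\Q=|\Sigma_{[n]}|-\dim K(X)_\Q\), and the neutral relations span the rational kernel. That already suffices for \(\Z\)-valued \(g\): \(K(X)\) is torsion-free, so every integral relation lies in the rational kernel. The integral generation you aim for is stronger than needed. You could complete your route by importing this leading-term statement, or by proving a genuine straightening lemma for descent flags. As it stands, sufficiency is not established.
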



To state the product rule, let \(N = \Z^n/\Z(1,\ldots,1)\) be the cocharacter lattice of \(X_{[n]}\), and write \(N_\R = N\otimes_\Z \R\).

\begin{restatable}{theorem}{permutohedralProduct}\label{cor:product-rule-general}
    Let \(g_1, g_2\) be Grothendieck weights on \(X_{[n]}\), let \(v\in N\) be a generic vector, and let \(\gamma\in\Sigma_{[n]}\). Then
    \[
        (g_1\cdot g_2)(\gamma)
        = \sum_{\substack{\sigma\cap\tau=\gamma\\(\sigma+v)\cap \tau\neq\emptyset}}
        (-1)^{\dim \sigma + \dim \tau - n + 1 + \dim\gamma}\,
        g_1(\sigma)\,g_2(\tau).
    \]
    Here the sum ranges over cones \(\sigma,\tau\in\Sigma_{[n]}\). The sign uses the fact that \(\dim \Sigma_{[n]}=n-1\).
    The condition \(\sigma\cap\tau=\gamma\) is purely combinatorial and independent of \(v\).
\end{restatable}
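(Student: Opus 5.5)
The plan is to realize the product \(g_1\cdot g_2\) as a pullback along the diagonal, and then degenerate the diagonal torically. Let \(\alpha_1,\alpha_2\in K(X_{[n]})\) be the classes with \(g_{\alpha_i}=g_i\). By Shah's ring isomorphism and the projection formula,
\[
(g_1\cdot g_2)(\gamma)=\chi(\alpha_1\alpha_2\, x_\gamma)=\chi_{X\times X}\bigl((\alpha_1\boxtimes\alpha_2)\cdot[\Delta_*\mathcal{O}_{V(\gamma)}]\bigr),
\]
where \(X=X_{[n]}\). So it suffices to write \([\Delta_*\mathcal{O}_{V(\gamma)}]\in K(X\times X)\) as an integral combination \(\sum c_{\sigma,\tau}\,x_\sigma\boxtimes x_\tau\). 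Each term then contributes \(c_{\sigma,\tau}\,\chi(\alpha_1x_\sigma)\chi(\alpha_2x_\tau)=c_{\sigma,\tau}\,g_1(\sigma)g_2(\tau)\) by Künneth. The claim reduces to the identity
\[
c_{\sigma,\tau}=(-1)^{\dim\sigma+\dim\tau-n+1+\dim\gamma}
\]
when \(\sigma\cap\tau=\gamma\) and \((\sigma+v)\cap\tau\ne\emptyset\), and \(c_{\sigma,\tau}=0\) otherwise.

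To compute \([\Delta_*\mathcal{O}_{V(\gamma)}]\), let the one-parameter subgroup \(\lambda_v(t)\) act on the first factor. The classes \([(\lambda_v(t)\times 1)\cdot\Delta(V(\gamma))]\) are all equal in \(K\), and the flat limit as \(t\to0\) is a torus-invariant subscheme \(Z\subset V(\gamma)\times V(\gamma)\subset X\times X\). The standard fan-displacement analysis (as in Fulton--Sturmfels, or Katz's tropical description) identifies \(Z\) set-theoretically with the union of the \(V(\sigma)\times V(\tau)\), over pairs with \(\sigma\cap\tau=\gamma\) and \((\sigma+v)\cap\tau\ne\emptyset\).

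Since the permutohedral fan is unimodular, I expect \(Z\) to be reduced. In fact its ideal should be a square-free monomial (Stanley--Reisner type) ideal in each affine chart, because the Gröbner/initial degeneration of the diagonal along a generic weight is square-free here. Granting that, inclusion--exclusion on the intersections of components gives
\[
[\mathcal{O}_Z]=\sum_{\sigma,\tau}\mu(\sigma,\tau)\,x_\sigma\boxtimes x_\tau .
\]
The intersection of two components \(V(\sigma)\times V(\tau)\) and \(V(\sigma')\times V(\tau')\) is \(V(\sigma+\sigma')\times V(\tau+\tau')\), which is again of the same type. The coefficients \(\mu\) are therefore Möbius-function values on a poset of cells.

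That poset is the face poset of the polyhedral complex \(\mathcal{C}\) cut out in \(N_\R\) (relative to the star of \(\gamma\)) by the cells \((\sigma+v)\cap\tau\), for \(\sigma,\tau\) in the star of \(\gamma\). The generic \(v\) makes these cells transverse, so \(\dim\bigl((\sigma+v)\cap\tau\bigr)=\dim\sigma+\dim\tau-(n-1)\). Modulo the lineality \(\gamma\), the complex \(\mathcal{C}\) is a regular cell decomposition of a ball (the common refinement of the translated star and the star).

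For such a ball, the Möbius inversion expressing the structure sheaf of the whole union through its cells is Euler characteristic bookkeeping. It gives coefficient \((-1)^{\text{codim}}\) on interior cells, and \(0\) on boundary cells, where the links are contractible. Interior cells are exactly those with \(\sigma\cap\tau=\gamma\). Their codimension in the ball is \(\dim\sigma+\dim\tau-(n-1)-\dim\gamma\) up to sign conventions, which yields the stated sign \((-1)^{\dim\sigma+\dim\tau-n+1+\dim\gamma}\). The condition \(\sigma\cap\tau=\gamma\) does not involve \(v\), as noted in the statement.

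The main obstacle is this last step. It requires two things:
\begin{itemize}
\item proving that the limit scheme is reduced, with components meeting as claimed, so that inclusion--exclusion is valid in \(K\)-theory and not just in Chow;
\item computing the Möbius function precisely enough to see that boundary cells cancel.
\end{itemize}
I would attack both by working chart by chart in the affine toric variety of a maximal cone of \(\Sigma_{[n]}\times\Sigma_{[n]}\). There the diagonal ideal \((x_i-y_i)\) degenerates under the weight \(v\) to a square-free monomial ideal whose Stanley--Reisner complex is \(\mathcal{C}\). The Hilbert series of a Stanley--Reisner ring of a ball then gives the interior-face formula directly.
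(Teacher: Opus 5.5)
Your outline is the same as the paper's. You write the product as $\chi\bigl((g_1\boxtimes g_2)\cdot\delta_*[\mathcal{O}_{V(\gamma)}]\bigr)$ and degenerate the diagonal by $\lambda_v$ on one factor; working inside $V(\gamma)\times V(\gamma)$ is equivalent to the paper's passage to $\Star(\gamma)$. You then show the special fiber $Z$ is reduced, expand $[\mathcal{O}_Z]$ by inclusion--exclusion, and evaluate a M\"obius function.

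\textbf{The gap is the reducedness step, and your reason for it is wrong.} Unimodularity of $\Sigma_{[n]}$ does not make $Z$ reduced. In a chart $U_{\sigma_0}\times U_{\tau_0}$ with $\sigma_0\neq\tau_0$, the diagonal is not cut out by $(x_i-y_i)$. It is the toric ideal of the configuration formed by the dual basis $\{a_\rho\}$ of $\sigma_0$ together with the dual basis $\{b_{\rho'}\}$ of $\tau_0$. Its initial ideal for the weight coming from $v$ corresponds to a regular subdivision. The maximal cells are indexed by faces $\sigma$ of $\sigma_0$ and $\tau$ of $\tau_0$ with $(\sigma+v)\cap\tau$ a point, and each has normalized volume $[M:\sigma^\perp+\tau^\perp]=[N:N_\sigma+N_\tau]$. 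The initial ideal is square-free exactly when all these indices equal $1$, and for smooth fans they need not.

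Here is a concrete failure. Take the Hirzebruch surface $F_2$, with rays $(1,0),(0,1),(-1,2),(0,-1)$, and $v=(-3,2)$. The pair $\sigma=\R_{\ge0}(1,0)$, $\tau=\R_{\ge0}(-1,2)$ satisfies $(\sigma+v)\cap\tau\neq\emptyset$ and $[N:N_\sigma+N_\tau]=2$, so $Z$ has a double component there. Passing to the codimension-$2$ part of your $\pm1$ formula gives the Fulton--Sturmfels rule with all multiplicities $1$. That yields $D_{(0,1)}^2=-3$ instead of the correct $-2$. So your argument, as stated, would prove a false statement for every smooth complete fan.

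The missing ingredient is what the paper calls strong unimodularity: $[N:N_\sigma+N_\tau]\in\{1,\infty\}$ for all cones $\sigma,\tau$. The paper proves this for $\Sigma_{[n]}$ in three steps:
\begin{enumerate}
\item reduce to $\dim\sigma+\dim\tau=n-1$;
\item row-reduce the ray generators of $\sigma_\pi,\tau_{\pi'}$, together with $e_{[n]}$, to the block indicator vectors $e_{S_i},e_{T_j}$;
\item recognize the result as a submatrix of the incidence matrix of a bipartite graph, which is totally unimodular.
\end{enumerate}
Combine this with the index-duality lemma $[M:\sigma^\perp+\tau^\perp]=[N:N_\sigma+N_\tau]$ and Sturmfels' criterion (a toric initial ideal is square-free $\iff$ its regular triangulation is unimodular). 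That gives reducedness chart by chart.

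A secondary correction concerns the M\"obius step. $\mathcal{C}$ is not a cell decomposition of a ball: it subdivides all of $N_\R/\operatorname{span}_\R(\gamma)$ and has unbounded cells. The right bookkeeping is in the lattice generated by the vertices of $\mathcal{C}$. There $\mu(\hat0,P)=(-1)^{\dim P+1}$ for bounded $P$, and $\mu(\hat0,P)=0$ for unbounded $P$ because the bounded faces of an unbounded polyhedron form a contractible complex. This is the Bayer--Sturmfels result the paper cites, and it is the precise form your ``contractible links'' heuristic should take.
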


\begin{remark}
    Let \((v_1,\dots,v_n)\) represent \(v\in N_\R\). Then \(v\) is generic for \(\Sigma_{[n]}\) if
    \[
        \sum_{i\in S}v_i\neq\sum_{i\in T}v_i
        \qquad\text{for all distinct }S,T\subseteq[n]\text{ with }|S|=|T|.
    \]
    See \cref{rmk:generic-permutohedron} for a proof. A convenient explicit choice is \(v_i=2^i\).
\end{remark}

In fact, both results hold in greater generality. We call a fan \(\Sigma\) \emph{strongly unimodular} (\cref{def:strongly-unimodular}) if it is unimodular and, for any two cones \(\sigma, \tau \in \Sigma\), the sublattice \(N_\sigma + N_\tau\) has index \(1\) or \(\infty\) in \(N\), where \(N_\sigma\) denotes the sublattice spanned by the rays of \(\sigma\). The permutohedral fan \(\Sigma_{[n]}\) is strongly unimodular, and this class is closed under taking subfans, star fans, and products. The \(K\)-balancing condition (\cref{thm:k-balancing}) and the product rule (\cref{thm:product-rule-arbitrary-cone}) both hold for any complete strongly unimodular fan.


We next apply the theory of Grothendieck weights to matroids.
For a loopless matroid \(\mathsf{M}\) on \(E = [n]\), Larson, Li, Payne, and Proudfoot \cite{LARSON2024109554} define the matroid \(K\)-ring \(K(\mathsf{M})\) and the matroidal fan \(\Sigma_{\mathsf{M}}\), whose cones are indexed by flags of nonempty proper flats of \(\mathsf{M}\).
As a direct application of \cref{thm:k-balancing-permutohedron}, we obtain a combinatorial characterization of Grothendieck weights on \(\Sigma_{\mathsf{M}}\).

\begin{restatable}{proposition}{matroidBalancing}\label{prop:K-balancing-for-matroid}
    A function \(g : \Sigma_{\mathsf{M}} \to \Z\) is a Grothendieck weight on \(\Sigma_{\mathsf{M}}\) if and only if for each pair \(i \ne j\) and each \(\{i,j\}\)-neutral flag of flats \(\mathcal{G}\) of \(\mathsf{M}\),
    \[
        \sum_{\mathcal{F}\in\mathbf{S}_{ij}(\mathcal{G})}(-1)^{\ell(\mathcal{F})} g(\mathcal{F})
        = \sum_{\mathcal{F}\in\mathbf{S}_{ji}(\mathcal{G})}(-1)^{\ell(\mathcal{F})} g(\mathcal{F}),
    \]
    where \(\mathbf{S}_{ij}(\mathcal{G})\) denotes the set of strict refinements \(\mathcal{F}\sref\mathcal{G}\) in which every new flat \(H\in\mathcal{F}\setminus\mathcal{G}\) contains \(i\) but not \(j\).
\end{restatable}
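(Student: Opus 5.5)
The plan is to deduce the statement from \cref{thm:k-balancing-permutohedron} by showing that Grothendieck weights on \(\Sigma_{\mathsf{M}}\) are exactly the functions whose extension by zero is a Grothendieck weight on \(\Sigma_{[n]}\). Write \(X_{\mathsf{M}}\subseteq X_{[n]}\) for the open toric subvariety with fan \(\Sigma_{\mathsf{M}}\), so that \(K(\mathsf{M})=K(X_{\mathsf{M}})\), with the classes \(x_\sigma\) for \(\sigma\in\Sigma_{\mathsf{M}}\) as generators. For \(g:\Sigma_{\mathsf{M}}\to\Z\), let \(\tilde g:\Sigma_{[n]}\to\Z\) be the extension by zero to cones not in \(\Sigma_{\mathsf{M}}\).

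\textbf{Step 1: the kernel of restriction.} The kernel of the restriction \(K(X_{[n]})\to K(X_{\mathsf{M}})\) is spanned by the classes \(x_\tau\) with \(\tau\notin\Sigma_{\mathsf{M}}\).
\begin{itemize}
\item Let \(Z=X_{[n]}\setminus X_{\mathsf{M}}\). It is the union of the torus orbits \(O(\tau)\) with \(\tau\notin\Sigma_{\mathsf{M}}\).
\item This set of cones is upward closed, so \(Z\) is closed and is the union of the orbit closures \(V(\tau)\), \(\tau\notin\Sigma_{\mathsf{M}}\).
\item The localization sequence \(G_0(Z)\to K(X_{[n]})\to K(X_{\mathsf{M}})\to 0\) identifies the kernel with the image of \(G_0(Z)\).
\item Filter \(Z\) by unions of orbits of increasing dimension. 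Each orbit is a torus, whose \(G_0\) is generated by its structure sheaf, so \(G_0(Z)\) is generated by the classes \([\mathcal O_{V(\tau)}]=x_\tau\) for \(\tau\notin\Sigma_{\mathsf{M}}\).
\end{itemize}
I expect this to be the main technical point, though it is standard.

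\textbf{Step 2: \(g\) is a Grothendieck weight on \(\Sigma_{\mathsf{M}}\) if and only if \(\tilde g\) is one on \(\Sigma_{[n]}\).}
\begin{itemize}
\item Suppose \(g\) is a Grothendieck weight and \(\sum_{\sigma\in\Sigma_{[n]}}c_\sigma x_\sigma=0\) in \(K(X_{[n]})\). Restricting to \(X_{\mathsf{M}}\) kills every term with \(\sigma\notin\Sigma_{\mathsf{M}}\), since \(V(\sigma)\cap X_{\mathsf{M}}=\emptyset\). This gives a relation \(\sum_{\sigma\in\Sigma_{\mathsf{M}}}c_\sigma x_\sigma=0\) in \(K(\mathsf{M})\). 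Hence \(\sum c_\sigma\tilde g(\sigma)=\sum_{\sigma\in\Sigma_{\mathsf M}}c_\sigma g(\sigma)=0\).
\item Conversely, suppose \(\tilde g\) is a Grothendieck weight and \(\sum_{\sigma\in\Sigma_{\mathsf{M}}}c_\sigma x_\sigma=0\) in \(K(\mathsf{M})\). By Step 1, \(\sum_{\sigma\in\Sigma_{\mathsf M}}c_\sigma x_\sigma=\sum_{\tau\notin\Sigma_{\mathsf M}}d_\tau x_\tau\) in \(K(X_{[n]})\). Applying \(\tilde g\) to this relation, the \(d_\tau\) terms contribute nothing, which gives \(\sum c_\sigma g(\sigma)=0\).
\end{itemize}
If the paper defines Grothendieck weights on \(\Sigma_{\mathsf M}\) via a pairing with \(\chi\), I would first reduce to this relation-based description as in Shah's setup.

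\textbf{Step 3: apply \cref{thm:k-balancing-permutohedron} to \(\tilde g\) and discard the trivial equations.}
\begin{itemize}
\item Let \(\mathcal G\) be an \(\{i,j\}\)-neutral flag. If \(\mathcal G\) contains a set that is not a flat, every refinement of \(\mathcal G\) contains it as well. Then \(\tilde g\) vanishes on \(\mathbf S_{ij}(\mathcal G)\) and \(\mathbf S_{ji}(\mathcal G)\), and the equation reads \(0=0\).
\item If \(\mathcal G\) is a flag of flats, the refinements in \(\mathbf S_{ij}(\mathcal G)\) that use a non-flat contribute zero. The surviving terms are exactly the refinements by flats that contain \(i\) but not \(j\), with \(\tilde g=g\) on them, and similarly for \(\mathbf S_{ji}(\mathcal G)\).
\end{itemize}
So the permutohedral equations for \(\tilde g\) are exactly the equations in the statement. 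Combined with Step 2, this proves the proposition.
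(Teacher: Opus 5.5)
Your proof is correct and follows the paper's argument. It shows that \(g\in\GW(\mathsf M)\) if and only if its zero-extension \(\tilde g\in\GW(\Sigma_{[n]})\), then applies \cref{thm:k-balancing-permutohedron} and discards the equations that become \(0=0\). The only difference is how you show that the kernel of \(K(X_{[n]})\to K(\mathsf M)\) is spanned by the \(x_\tau\) with \(\tau\notin\Sigma_{\mathsf M}\): you use the localization sequence for the open subvariety \(X_{\Sigma_{\mathsf M}}\subseteq X_{[n]}\) together with an orbit filtration of its complement, whereas the paper compares presentations to identify the kernel with the ideal generated by the non-flat \(x_S\), then uses \(x_S\cdot a=i_{S*}i_S^*(a)\) to show this ideal is spanned by such classes.
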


The main application of this paper is to the \emph{motivic Chern class} of a matroid.
For a realizable loopless matroid \(\mathsf{M}\) with realization \(L \subset \mathbb{C}^E\), let \(U_L = \mathbb{P}(L \cap (\mathbb{C}^*)^E)\) be the hyperplane arrangement complement, and let \(W_L = \overline{U_L} \subset X_{[n]}\) be its wonderful compactification.
The motivic Chern class \(\MC_y(\mathsf{M}) = \MC_y(U_L \to W_L) \in K(W_L)[y]\) is defined geometrically following \cite{BrasseletSchurmannYokura2010}.
Rather than computing the Grothendieck weight of \(\MC_y(\mathsf{M})\) directly, we work with its image under the normalized Serre duality \(\mathbb{D}\) on \(W_L\). Let \(g_{\mathsf{M}}^{\mathbb{D}}\) be the Grothendieck weight of \(\mathbb{D}(\MC_y(\mathsf{M}))\). This yields a cleaner combinatorial formula and suffices to recover \(\MC_y(\mathsf{M})\) itself.
For a matroid invariant \(\phi\) and a flag \(\mathcal{F} : \emptyset \subsetneq F_1 \subsetneq \cdots \subsetneq F_k \subsetneq E\), write
\[
    \phi(\mathsf{M})[\mathcal{F}] \coloneqq \phi(\mathsf{M}|F_1)\,\phi(\mathsf{M}|F_2/F_1)\cdots\phi(\mathsf{M}/F_k)
\]
for the product of \(\phi\) over the successive minors of \(\mathcal{F}\).

\begin{restatable}{proposition}{motivicChernWeight}\label{prop:GW-dual-MCy-matroid}
    Let \(\mathsf{M}\) be a realizable loopless matroid with realization \(L\), and let \(g_{\mathsf{M}}^{\mathbb{D}}\) denote the Grothendieck weight of \(\mathbb{D}(\MC_y(\mathsf{M})) \in K(W_L)[y]\), where \(\mathbb{D}_{W_L}(\mathcal{E}) = R\mathcal{H}om(\mathcal{E},\,\omega_{W_L}[\dim W_L])\) is the normalized Grothendieck--Serre duality on \(W_L\). For each flag of flats \(\mathcal{F}\),
    \[
        g_{\mathsf{M}}^{\mathbb{D}}(\mathcal{F}) = \frac{\chi_{\mathsf{M}}(-y)[\mathcal{F}]}{-1-y},
    \]
    where \(\chi_{\mathsf{M}}\) is the characteristic polynomial of \(\mathsf{M}\). In particular, the right-hand side is independent of the choice of realization \(L\).
\end{restatable}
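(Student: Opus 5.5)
By definition, \(g_{\mathsf{M}}^{\mathbb{D}}(\mathcal{F}) = \chi\bigl(\mathbb{D}(\MC_y(U_L\to W_L))\cdot x_{\mathcal{F}}\bigr)\), where \(x_{\mathcal{F}} = [\mathcal{O}_{V_L(\mathcal{F})}]\) and \(V_L(\mathcal{F}) = W_L\cap V(\sigma_{\mathcal{F}})\) is the boundary stratum closure of the wonderful compactification. The plan is to reduce this Euler characteristic to a product of local pieces, one for each successive minor of \(\mathcal{F}\), and then evaluate each piece on the open arrangement complement.

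\textbf{Step 1: reduce to the open stratum of \(V_L(\mathcal{F})\).} Grothendieck--Serre duality gives \(\chi(\mathbb{D}(\alpha)\cdot\beta) = \pm\chi(\alpha\cdot\mathbb{D}(\beta))\) up to the normalization sign. Since \(V_L(\mathcal{F})\) is a smooth complete intersection of the boundary divisors \(D_{F_1},\dots,D_{F_k}\), the Koszul resolution computes \(\mathbb{D}_{W_L}(\mathcal{O}_{V_L(\mathcal{F})})\) as \(\omega_{V}\) pushed forward, up to shift. I will not use this directly. It is cleaner to use the functoriality of \(\MC_y\) under the transversal (non-characteristic) inclusion \(i\colon V_L(\mathcal{F})\hookrightarrow W_L\). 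The arrangement complement has simple normal crossing boundary, so \(\MC_y(U\to W) = \lambda_y(\Omega^1_W(\log D))\otimes\mathcal{O}(-D)\). Restricting to \(V = V_L(\mathcal{F})\) and using the residue sequence for log forms, \(\Omega^1_W(\log D)|_V \cong \Omega^1_V(\log D|_V)\oplus\mathcal{O}^k\), so \(\lambda_y\) picks up a factor \((1+y)^k\). The divisor \(\mathcal{O}(-D)\) restricts to \(\mathcal{O}(-D_V)\otimes\bigotimes_{a}\mathcal{O}(-D_{F_a})|_V\). Because of the twists by the normal bundles \(\mathcal{O}(D_{F_a})|_V\), it is easiest to work directly with \(\mathbb{D}\). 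The dual class is \(\mathbb{D}\MC_y = \lambda_y(T_W(-\log D))\cdot(\text{twist})\). Pairing with \(\mathcal{O}_V\) and applying Serre duality on \(V\), the normal-bundle twists cancel against \(\omega_W|_V = \omega_V\otimes\det N^\vee\). This leaves, up to the sign and power of \(y\) coming from the normalization,
\[
\chi\bigl(\mathbb{D}\MC_y(U\to W)\cdot x_{\mathcal{F}}\bigr) \doteq (1+y)^{k}\,\chi_y\bigl(V^\circ\bigr),
\]
where \(V^\circ\) is the open stratum. I expect this bookkeeping of signs, powers of \(y\), and normal bundles to be the main obstacle. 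I would first verify it on the point case \(n=2\) and on \(\PP^1\).

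\textbf{Step 2: identify the stratum and evaluate \(\chi_y\).} Standard wonderful-compactification theory gives \(V^\circ \cong U_{L_{F_1}}\times U_{L_{F_2/F_1}}\times\cdots\times U_{L_{E/F_k}}\), a product of complements realizing the minors \(\mathsf{M}|F_1,\dots,\mathsf{M}/F_k\). The \(\chi_y\)-genus is motivic, hence multiplicative. For a projective arrangement complement \(U_{\mathsf{N}}\), the known formula \(E(U_{\mathsf{N}};u,v) = \overline{\chi}_{\mathsf{N}}(uv)\) gives, at \(uv=-y\), the value \(\chi_y(U_{\mathsf{N}}) = \chi_{\mathsf{N}}(-y)/(-y-1)\). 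This follows because the affine complement has class \(\chi_{\mathsf{N}}(\mathbb{L})\) and is a \(\C^*\)-bundle over \(U_{\mathsf{N}}\), so dividing by \(\mathbb{L}-1\) and specializing \(\mathbb{L}\mapsto -y\) yields the formula. Taking the product over the \(k+1\) minors and multiplying by \((1+y)^k\) (with the sign absorbed) gives
\[
\frac{\prod\chi_{\text{minor}}(-y)}{(-1-y)^{k+1}}\,(-1-y)^{k} = \frac{\chi_{\mathsf{M}}(-y)[\mathcal{F}]}{-1-y}.
\]
This matches the claimed formula, provided the factor from Step 1 is \((-1-y)^k\). Checking that the sign in Step 1 comes out this way is the crux. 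Independence of the realization \(L\) is then immediate, since the right-hand side depends only on the matroid.
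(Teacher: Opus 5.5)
Your route is the paper's route: Weber's formula \(\MC_y(U\to W)=\lambda_y[\Omega_W(\log D)]\cdot\mathcal{O}(-D)\), the residue sequence on the stratum \(V=D_{\mathcal{F}}\), adjunction plus Serre duality on \(V\) so that \(\det N_{V/W}\) cancels the twists \(\mathcal{O}(-D_{F_a})|_V\), then the product decomposition of \(D_{\mathcal{F}}^\circ\) and \(\chi_y(U_{\mathsf{N}})=\overline{\chi}_{\mathsf{N}}(-y)\). Your derivation of the last identity from the class \(\chi_{\mathsf{N}}(\mathbb{L})\) and the \(\C^*\)-bundle is a fine substitute for the citation to Aluffi.

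The gap is that Step 1 is only asserted up to an undetermined sign, and you even leave room for a stray power of \(y\). The statement is precisely a claim about that sign: a factor \((1+y)^k\) instead of \((-1-y)^k\) changes the answer by \((-1)^k\). For example, for the uniform matroid of rank \(2\) on \([3]\) we have \(W_L=\PP^1\) and \(\MC_y=\mathcal{O}(-3)+y\,\mathcal{O}(-2)\), hence \(\mathbb{D}(\MC_y)=-\mathcal{O}(1)-y\,\mathcal{O}\). Its value on the flag \(\emptyset\subsetneq\{1\}\subsetneq[3]\) is \(-1-y\), not \(1+y\). So as written the proposal does not yet prove the stated formula.

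The missing computation is short. Put \(d=\dim W_L\) and \(\alpha=\MC_y(U_L\to W_L)\). Since \(\mathbb{D}\) acts coefficientwise in \(y\), we have \(\mathbb{D}(\alpha)=(-1)^d\alpha^\vee\otimes\omega_{W_L}\), so no power of \(y\) can appear. The projection formula and Serre duality on \(V\), which has dimension \(d-k\), give
\begin{align*}
\chi\bigl(\mathbb{D}(\alpha)\cdot[\mathcal{O}_V]\bigr)
&=(-1)^d\chi_V\bigl((\alpha|_V)^\vee\otimes\omega_{W_L}|_V\bigr)\\
&=(-1)^{d}(-1)^{d-k}\chi_V\bigl(\alpha|_V\otimes\omega_{W_L}^{-1}|_V\otimes\omega_V\bigr)
=(-1)^k\chi_V\bigl(\alpha|_V\otimes\det N_{V/W_L}\bigr).
\end{align*}
Your residue computation then gives \(\alpha|_V\otimes\det N_{V/W_L}=(1+y)^k\MC_y(V^\circ\to V)\) in \(K(V)[y]\); only the \(K\)-class of the residue sequence is needed, not a splitting. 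So the factor is \((-1-y)^k\), and Step 2 finishes as you wrote.

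Equivalently, using the Koszul resolution you mentioned: on smooth proper \(W_L\) one has \(\chi(\mathbb{D}(\alpha)\cdot\beta)=\chi(\alpha\cdot\beta^\vee)\), and \([\mathcal{O}_V]^\vee=\prod_a\bigl(1-[\mathcal{O}(D_{F_a})]\bigr)=(-1)^k\,i_*\det N_{V/W_L}\). Incidentally, your opening claim \(\chi(\mathbb{D}(\alpha)\cdot\beta)=\pm\chi(\alpha\cdot\mathbb{D}(\beta))\) is false in general: on \(\PP^1\) with \(\alpha=1\) and \(\beta=\mathcal{O}(1)\) the two sides are \(0\) and \(\pm2\). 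The correct identity is the one just stated.
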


By the \(K\)-balancing condition, this purely combinatorial formula defines a Grothendieck weight for every loopless matroid, including nonrealizable ones.

\begin{restatable}{theorem}{motivicChernGeneral}\label{thm:motivic-Chern-non-realizable}
    For any loopless matroid \(\mathsf{M}\), the function \(g_{\mathsf{M}}^{\mathbb{D}} : \Sigma_{\mathsf{M}} \to \Z[y]\) defined by \(g_{\mathsf{M}}^{\mathbb{D}}(\mathcal{F}) = \chi_{\mathsf{M}}(-y)[\mathcal{F}]/(-1-y)\) is a Grothendieck weight on \(\Sigma_{\mathsf{M}}\).
\end{restatable}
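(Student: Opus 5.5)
We plan to verify the \(K\)-balancing condition of \cref{prop:K-balancing-for-matroid} directly for \(g(\mathcal{F}) = \chi_{\mathsf{M}}(-y)[\mathcal{F}]/(-1-y)\). Since that condition is a finite system of linear identities whose terms are built from minors of \(\mathsf{M}\), it suffices to prove, for every loopless matroid, every pair \(i\neq j\), and every \(\{i,j\}\)-neutral flag of flats \(\mathcal{G}\), the identity
\[
\sum_{\mathcal{F}\in\mathbf{S}_{ij}(\mathcal{G})}(-1)^{\ell(\mathcal{F})}\chi_{\mathsf{M}}(-y)[\mathcal{F}]
=\sum_{\mathcal{F}\in\mathbf{S}_{ji}(\mathcal{G})}(-1)^{\ell(\mathcal{F})}\chi_{\mathsf{M}}(-y)[\mathcal{F}].
\]
Multiplying through by \(-1-y\) is harmless. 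We first check that \(\chi_{\mathsf{M}}(-y)[\mathcal{F}]\) is divisible by \(-1-y\), so that \(g\) takes values in \(\Z[y]\). This holds because \(\chi\) of any loopless minor vanishes at \(1\), so each factor is divisible by \(1+y\).

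\textbf{Reduction to a single interval.} Let \(\mathcal{G}: G_1\subsetneq\cdots\subsetneq G_k\). The new flats of any \(\mathcal{F}\in\mathbf{S}_{ij}(\mathcal{G})\) contain \(i\) but not \(j\). By neutrality, all of them lie in the unique interval \((G_{a},G_{a+1})\) with \(i,j\in G_{a+1}\setminus G_a\), where we set \(G_0=\emptyset\) and \(G_{k+1}=E\). The factor \(\chi(\mathsf{M})[\mathcal{F}]\) splits as the product of the factors of \(\mathcal{G}\) over the other intervals, times the corresponding expression for the minor \(\mathsf{N}=\mathsf{M}|G_{a+1}/G_a\), which is loopless. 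After cancelling the common factor, the claim reduces to \(\mathcal{G}=\emptyset\) for the matroid \(\mathsf{N}\), with \(i,j\) in its ground set.

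\textbf{The core identity.} For a loopless matroid \(\mathsf{N}\) on \(E\) and \(i\neq j\), define
\[
A_{ij}=\sum(-1)^{m}\chi_{\mathsf{N}|H_1}\chi_{\mathsf{N}|H_2/H_1}\cdots\chi_{\mathsf{N}/H_m},
\]
summed over nonempty chains \(H_1\subsetneq\cdots\subsetneq H_m\) of proper flats with \(i\in H_1\) and \(j\notin H_m\). We must show \(A_{ij}=A_{ji}\). We will use the convolution structure. Set \(f(F,F')=\chi_{\mathsf{N}|F'/F}(t)\) for flats \(F\le F'\), taking \(f(F,F)=1\). The chains counted by \(A_{ij}\) are those for which the first jump covers \(i\) and the last remaining segment contains \(j\). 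Summing \((-1)^m\) over chains is inversion in the incidence algebra of the lattice of flats, so
\[
A_{ij}=\sum_{i\in H\le H',\ j\notin H'} f(\emptyset,H)\,(f)^{-1}_{\{i\not\ni j\}}(H,H')\,f(H',E),
\]
where the inverse is taken in the incidence algebra restricted to the interval of flats containing \(i\) but not \(j\). Equivalently, \(A_{ij}\) is minus the coefficient obtained by decomposing chains from \(\hat0\) to \(\hat1\) according to the first step containing \(i\) and the last step avoiding \(j\).

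The key input is that \(\chi\) is multiplicative in the incidence-algebra sense: \(\chi_{\mathsf{N}}(t)=\sum_{F}\mu(\emptyset,F)t^{r-\rk F}\). This means \(f=\mu * \zeta_t\), where \(\zeta_t(F,F')=t^{\rk F'-\rk F}\). The whole expression then collapses to a sum involving \(\mu\) and powers of \(t\) over flats separating \(i\) and \(j\). We expect \(A_{ij}\) to telescope to a quantity of the form
\[
\sum_{F\ni i,\ j\notin F}\chi_{\mathsf{N}|F}\cdot\chi_{\mathsf{N}/F}\ \text{-type terms}\ -\ \chi_{\mathsf{N}},
\]
or similar. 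Its symmetry in \(i,j\) should then follow from the standard identity \(\chi_{\mathsf{N}}(t)=(t-1)\,\overline{\chi}\) together with Weisner's theorem: the sum of \(\mu(\emptyset,F)\) over flats \(F\) with \(F\vee\mathrm{cl}(i)=E\) is independent of the atom chosen.

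\textbf{Fallback and main obstacle.} Alternatively, we could avoid the explicit computation by a deformation or valuativity argument. Both sides of the identity are valuative in \(\mathsf{N}\), and they agree for realizable matroids by \cref{prop:GW-dual-MCy-matroid}. Since the realizable matroids, for example Schubert matroids, span the relevant valuative group, equality would follow for all matroids. The subtlety is that the sums range over flats of \(\mathsf{N}\) and products of minors, so valuativity has to be checked, for instance via the known valuativity of \(\chi\) and of flag-sum functionals (Ardila--Sanchez). I expect the main obstacle to be carrying out the incidence-algebra collapse cleanly in the direct approach, that is, identifying precisely the symmetric closed form of \(A_{ij}\). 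The valuative route is the backup if that computation becomes unwieldy.
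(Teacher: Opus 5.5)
Your reduction to \(\mathcal{G}=\emptyset\) for the minor \((\mathsf{M}|G_{a+1})/G_a\) matches the paper's setup. So do the divisibility remark and your incidence-algebra rewriting of \(A_{ij}\), which should read \(A_{ij}=-\sum_{H\le H'}f(\emptyset,H)\,f^{-1}(H,H')\,f(H',E)\) over flats \(H\le H'\) containing \(i\) but not \(j\). But the proposal stops exactly where the content of the theorem lies. You never prove \(A_{ij}=A_{ji}\), and the closed form you ``expect'' is not the right one. Two M\"obius collapses are missing; these are the paper's \cref{lem:Psi-formula} and \cref{lem:pointed-convolution}.

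First, fix \(H'\) and sum over \(H\). Use \(f=\mu*\zeta_t\) and \(f^{-1}(H,H')=\sum_{H\le K\le H'}\mu(H,K)\,t^{\rk K-\rk H}\). Summing \(\mu(H,K)\) over \(H\in[G\vee\operatorname{cl}(i),K]\) forces \(K=G\vee\operatorname{cl}(i)\), which gives
\[
\sum_{\substack{H\ni i\\ H\le H'}}f(\emptyset,H)\,f^{-1}(H,H')=\sum_{\substack{G\le H'\\ i\in G}}\mu(\emptyset,G)+t\sum_{\substack{G\le H'\\ i\notin G}}\mu(\emptyset,G)=(t-1)\,\overline{\chi}_{\mathsf{N}|H'}(1).
\]
Hence \(A_{ij}=-(t-1)\sum_{i\in F,\ j\notin F}\overline{\chi}_{\mathsf{N}|F}(1)\,\chi_{\mathsf{N}/F}(t)\). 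Note that the beta-invariant factor \(\overline{\chi}_{\mathsf{N}|F}(1)\) appears here, not \(\chi_{\mathsf{N}|F}(t)\). This expression is still not visibly symmetric, and there is no termwise symmetry over flats separating \(i\) from \(j\).

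Second, the same kind of collapse gives
\[
\sum_{F\ni i}\overline{\chi}_{\mathsf{N}|F}(1)\,\chi_{\mathsf{N}/F}(t)=\sum_{i\notin H}\mu(\emptyset,H)\,t^{r-1-\rk H}=\overline{\chi}_{\mathsf{N}}(t),
\]
which is independent of \(i\). Write the sum over \(F\ni i,\ j\notin F\) as the sum over \(F\ni i\) minus the sum over \(F\ni i,j\); symmetry then follows. So the Weisner input you need is the pointed expansion of \(\overline{\chi}_{\mathsf{N}}\), used inside this convolution. The statement you quote about flats with \(F\vee\operatorname{cl}(i)=E\) does not do this work, since that \(\mu\)-sum is simply \(0\).

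The valuative fallback does not fill the gap as written either. Agreement on realizable matroids is fine, since there \(g^{\mathbb{D}}_{\mathsf{M}}\) is the weight of an honest \(K\)-class. Schubert matroids do span the valuative group. But the whole argument then rests on \(\mathsf{N}\mapsto A_{ij}(\mathsf{N})-A_{ji}(\mathsf{N})\) being a valuation on labelled matroids, and you only assert this. It might be provable, for instance by treating each fixed chain as a valuative invariant composed with a face map of the matroid polytope. That takes care and is no easier than the direct M\"obius computation above, which is the route the paper takes.
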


This allows us to define \(\MC_y(\mathsf{M})\) for all loopless matroids via Grothendieck--Serre duality (\cref{def:motivic-Chern-general}).
In the realizable case, applying the Hirzebruch class transformation \(T_{y*} = td_{(1+y)} \circ \MC_y\) and specializing at \(y = -1\) yields the Chern--Schwartz--MacPherson classes of matroids, recovering a result of López de Medrano, Rincón, and Shaw \cite{LopezdeMedranoRinconShaw2020}.

\begin{restatable}{corollary}{csmSpecialization}\label{cor:MCy-specializes-to-CSM}
    For a realizable loopless matroid \(\mathsf{M}\) and a flag of flats \(\mathcal{F}\) of length \(k\),
    \[
        \operatorname{csm}_k(\mathsf{M})(\mathcal{F}) = (-1)^{r-1-k}\,\beta(\mathsf{M})[\mathcal{F}]
    \]
    where \(r = \operatorname{rk}(\mathsf{M})\) and \(\beta\) denotes Crapo's beta invariant.
\end{restatable}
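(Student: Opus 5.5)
We need to prove \cref{cor:MCy-specializes-to-CSM}: for a realizable loopless \(\mathsf{M}\), the CSM class satisfies \(\operatorname{csm}_k(\mathsf{M})(\mathcal{F}) = (-1)^{r-1-k}\beta(\mathsf{M})[\mathcal{F}]\).

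The plan is to pass from the Grothendieck weight of \(\mathbb{D}(\MC_y(\mathsf{M}))\) in \cref{prop:GW-dual-MCy-matroid} to the Minkowski weight of \(\operatorname{csm}(U_L\to W_L)\). This uses two standard facts of Brasselet--Schürmann--Yokura.
\begin{itemize}
  \item The normalized Hirzebruch transformation \(T_{y*}=td_{(1+y)}\circ\MC_y\), specialized at \(y=-1\), gives the total CSM class.
  \item Serre duality intertwines with \(\MC_y\) via \(\mathbb{D}\MC_y(U\to W)=\MC_{1/y}(\cdots)\) up to the factor \((-y)^{\dim}\). More usefully, for the Todd class we have \(\chi(\mathbb{D}\alpha\cdot x)=\pm\chi(\alpha\cdot \mathbb{D}x)\), and the degree-\(k\) component of \(td\) sees only leading terms.
\end{itemize}

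Concretely, I would work with Euler characteristics \(\chi(W_L,\MC_y\cdot x_{\mathcal{F}})\). On a smooth variety, \(\chi(W_L,\MC_y(U\to W)\otimes \mathcal{O}_{V(\mathcal{F})})\) can be rewritten via Hirzebruch--Riemann--Roch as \(\int td_{(1+y)}(\MC_y)\cdot \widehat{ch}(\mathcal{O}_{V(\mathcal{F})})\), with the appropriately normalized Chern character.

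The first key step is to determine the degree-\(k\) component of the CSM class on the orbit closure \(V(\mathcal{F})\), which has codimension \(k\). Setting \(y=-1\) kills all but the top-degree (Chow) part. So \(\operatorname{csm}_k(\mathcal{F})=\int [V(\mathcal{F})]\cdot \operatorname{csm}_k\). Here I index \(\operatorname{csm}_k\) so that it pairs with cones of length \(k\); its dimension is \(r-1-k\). I would extract this as a limit: the coefficient of the leading power of \((1+y)\) in \(g^{\mathbb{D}}_{\mathsf{M}}(\mathcal{F})\), or in \(g_{\MC_y}(\mathcal{F})\) obtained by dualizing back.

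The second step is the combinatorial evaluation. We have \(\chi_{\mathsf{M}}(q)=(q-1)\overline{\chi}_{\mathsf{M}}(q)\), and for each loopless minor, \(\overline\chi(1)=(-1)^{\rk-1}\beta\). Substituting \(q=-y\) gives
\[
g^{\mathbb{D}}_{\mathsf{M}}(\mathcal{F})=\frac{\prod_{i=0}^{k}(-y-1)\overline\chi_{\mathsf{M}_i}(-y)}{-1-y}=(-1-y)^{k}\prod_i\overline\chi_{\mathsf{M}_i}(-y).
\]
Here \(\mathsf{M}_0,\dots,\mathsf{M}_k\) are the \(k+1\) minors of \(\mathcal{F}\). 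At \(y\to-1\) the lowest-order term in \((1+y)\) has order exactly \(k\), with coefficient \((-1)^k\prod_i(-1)^{\rk \mathsf{M}_i-1}\beta(\mathsf{M}_i)=(-1)^k(-1)^{r-k-1}\beta(\mathsf{M})[\mathcal{F}]\), since \(\sum_i\rk\mathsf{M}_i=r\). This order-\(k\) behaviour matches the codimension-\(k\) orbit closure: the factor \((1+y)^{k}\) is exactly the twist from \(td_{(1+y)}\) on the degree-\(k\) piece.

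The main obstacle is the precise sign and normalization bookkeeping in relating \(\chi(\mathbb{D}(\MC_y)\cdot x_{\mathcal{F}})\) to \(\int \operatorname{csm}\cdot[V(\mathcal{F})]\). This involves three sign sources:
\begin{itemize}
  \item the duality sign \((-1)^{\dim W_L}=(-1)^{r-1}\);
  \item the substitution \(y\mapsto 1/y\) with its power of \(-y\);
  \item the ambiguity between \(\mathcal{O}_{V(\mathcal{F})}\) and its dual \(\mathbb{D}\mathcal{O}_{V(\mathcal{F})}\) near \(y=-1\).
\end{itemize}
I would handle this by checking that \((1+y)^{-\deg}\)-scaling in \(td_{(1+y)}\) makes only the leading \((1+y)^k\) coefficient survive. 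I would then calibrate the overall sign on the trivial case \(\mathsf{M}=U_{1,1}\), or on \(\mathsf{M}\) Boolean, where \(U_L\) is a torus with \(\operatorname{csm}=0\) except in degree matching \(\beta\) of rank-one minors. Finally, I would compare with \cite{LopezdeMedranoRinconShaw2020} as a consistency check.
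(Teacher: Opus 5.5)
There is a genuine gap. Your overall idea is the right one: read \(\operatorname{csm}_k(\mathsf{M})(\mathcal{F})\) off as the leading coefficient of \(g^{\mathbb{D}}_{\mathsf{M}}(\mathcal{F})\) at \(y=-1\). Your evaluation \(g^{\mathbb{D}}_{\mathsf{M}}(\mathcal{F})=(-1-y)^k\prod_i\overline{\chi}_{\mathsf{M}_i}(-y)\) is also correct. But the bridge from the Euler pairing \(\chi(\mathbb{D}(\MC_y)\cdot x_{\mathcal{F}})\) to the Chow pairing \(\int[D_{\mathcal{F}}]\cap c_{\mathrm{SM}}(1_{U_L})\) is essentially the whole content of the corollary beyond \cref{prop:GW-dual-MCy-matroid}, and you leave it unproved.

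Worse, as sketched it gives the wrong sign. Identifying the coefficient of \((1+y)^k\) with \(\operatorname{csm}_k\) yields \((-1)^k(-1)^{r-1-k}\beta(\mathsf{M})[\mathcal{F}]=(-1)^{r-1}\beta(\mathsf{M})[\mathcal{F}]\), which is off from the claim by \((-1)^k\). This discrepancy depends on \(k\), so calibrating one overall sign on \(U_{1,1}\) or a Boolean matroid cannot repair it. For Boolean matroids only complete flags have \(\beta[\mathcal{F}]\neq 0\), so that check only sees \(k=r-1\).

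The sign sources you list are not the relevant ones either. The factor \((-1)^{\dim W_L}\) cancels against Serre duality, and \(y\mapsto 1/y\) is not needed. Your identity \(\chi(\mathbb{D}\alpha\cdot x)=\pm\chi(\alpha\cdot\mathbb{D}x)\) is false in general. The correct statement is \(\chi(\mathbb{D}\alpha\cdot x)=\chi(\alpha\cdot x^\vee)\).

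The missing chain is as follows. First undo the duality: since \([\mathcal{O}_{D_{\mathcal{F}}}]^\vee=(-1)^k\,i_*\det N_{D_{\mathcal{F}}/W_L}\), one gets \(g^{\mathbb{D}}_{\mathsf{M}}(\mathcal{F})=(-1)^k\chi\bigl(\MC_y(U_L\to W_L)\cdot i_*\det N_{D_{\mathcal{F}}/W_L}\bigr)\). This is \cref{cor:GW-for-MCy}, and it is where the sign \((-1)^k\), the codimension of \(D_{\mathcal{F}}\), enters.

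Next, rescale Hirzebruch--Riemann--Roch as \(\chi(\alpha\beta)=\int \ch_{(1+y)}(\beta)\cap td_{(1+y)}(\alpha)\). Grothendieck--Riemann--Roch shows that \(\ch(i_*\det N)\) vanishes below codimension \(k\) and has codimension-\(k\) part \([D_{\mathcal{F}}]\). Hence \(g^{\mathbb{D}}_{\mathsf{M}}(\mathcal{F})=(-1-y)^k\bigl(\int_{W_L}[D_{\mathcal{F}}]\cap T_{y*}(U_L\to W_L)+O(1+y)\bigr)\). So the correct expansion parameter is \(-1-y\), not \(1+y\). Dividing by \((-1-y)^k\) and setting \(y=-1\) gives \(\int[D_{\mathcal{F}}]\cap c_{\mathrm{SM}}(1_{U_L})=\prod_i\overline{\chi}_{\mathsf{M}_i}(1)=(-1)^{r-1-k}\beta(\mathsf{M})[\mathcal{F}]\).

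The paper avoids \(\mathbb{D}\) altogether: it pairs \(\MC_y\) with \(i_*\det N\) via \cref{prop:CSM-SNC-pairing} and then evaluates \(\chi(D^\circ_{\mathcal{F}})\). That is the same computation. You also still need the projection formula with \(j^*[V(\sigma_{\mathcal{F}})]=[D_{\mathcal{F}}]\) for \(j:W_L\hookrightarrow X_{[n]}\), to identify this number with the Minkowski weight \(\operatorname{csm}_k(\mathsf{M})(\mathcal{F})\). Note that this is the dimension-\(k\) piece of the class, not dimension \(r-1-k\) as you wrote.
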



As a further application, we compute the Grothendieck weights of the tautological bundles associated with a matroid.
Following \cite{BergetEurSpinkTseng2023}, for a loopless matroid \(\mathsf{M}\) of rank \(r\) on \(E = [n]\), let \(\mathcal{S}_{\mathsf{M}}\) and \(\mathcal{Q}_{\mathsf{M}}\) be the tautological subbundle and quotient bundle on \(X_{[n]}\).

For a vector bundle \(\mathcal E\), we write
\[
    \lambda_t[\mathcal E]=\sum_{i\ge 0}[\wedge^i\mathcal E]t^i\in K(X)[t],
\]
and use the same notation for \(K\)-classes via the usual \(\lambda\)-ring structure.

\begin{restatable}{proposition}{tautologicalWeight}\label{prop:gw-tautological}
    The Grothendieck weight of \(\lambda_u(\mathcal{S}_{\mathsf{M}}^\vee)\lambda_v(\mathcal{Q}_{\mathsf{M}}^\vee)\) is the map
    \[
        \mathcal{F} \mapsto u^r T_{\mathsf{M}}\!\left(1+\tfrac{1}{u},\,1+v\right)[\mathcal{F}],
    \]
    where \(T_{\mathsf{M}}\) is the Tutte polynomial of \(\mathsf{M}\).
\end{restatable}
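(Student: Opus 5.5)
We compute \(g(\mathcal{F})=\chi\bigl(\lambda_u(\mathcal{S}_{\mathsf{M}}^\vee)\lambda_v(\mathcal{Q}_{\mathsf{M}}^\vee)\cdot x_{\mathcal{F}}\bigr)\) by restricting to the orbit closure \(V(\mathcal{F})\) and reducing to the case of the empty flag.

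\textbf{Restriction to \(V(\mathcal{F})\).} For \(\mathcal{F}\colon \emptyset\subsetneq F_1\subsetneq\cdots\subsetneq F_k\subsetneq [n]\), the orbit closure is a product of smaller permutohedral varieties,
\[
V(\mathcal{F})\cong X_{F_1}\times X_{F_2\setminus F_1}\times\cdots\times X_{[n]\setminus F_k}.
\]
Following \cite{BergetEurSpinkTseng2023}, the tautological bundles are built equivariantly from \(L\) (or, in the nonrealizable case, from the base polytope of \(\mathsf{M}\)). On \(V(\mathcal{F})\), whose torus-fixed points correspond to permutations compatible with \(\mathcal{F}\), the bases selected lie in \(\mathsf{M}|F_1\oplus \mathsf{M}|F_2/F_1\oplus\cdots\oplus \mathsf{M}/F_k\). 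I therefore expect the restrictions to split in \(K\)-theory as
\[
\mathcal{S}_{\mathsf{M}}|_{V(\mathcal{F})}=\boxplus_i \mathcal{S}_{\mathsf{M}_i},\qquad
\mathcal{Q}_{\mathsf{M}}|_{V(\mathcal{F})}=\boxplus_i \mathcal{Q}_{\mathsf{M}_i},
\]
where \(\mathsf{M}_i\) runs over the successive minors. The cleanest justification uses the localization description of equivariant \(K\)-classes: compare the torus characters at each fixed point on both sides. This splitting is the step I expect to be the main obstacle, since one must check that the box product is exactly the restriction and not merely a filtration quotient.

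\textbf{Reduction to the empty flag.} Since \(\lambda\) is multiplicative on direct sums, \(\lambda_u\lambda_v\) of the restriction is an external tensor product. By Künneth, the Euler characteristic then factors as
\[
g(\mathcal{F})=\prod_i \chi\bigl(X_{E_i},\,\lambda_u(\mathcal{S}_{\mathsf{M}_i}^\vee)\lambda_v(\mathcal{Q}_{\mathsf{M}_i}^\vee)\bigr).
\]
The ranks of the minors sum to \(r\), so the prefactor \(u^r\) also splits as a product. Hence it suffices to prove the case \(\mathcal{F}=\emptyset\), namely
\[
\chi\bigl(X_{E},\,\lambda_u(\mathcal{S}_{\mathsf{M}}^\vee)\lambda_v(\mathcal{Q}_{\mathsf{M}}^\vee)\bigr)=u^r T_{\mathsf{M}}(1+1/u,\,1+v).
\]
One caveat: a minor \(\mathsf{M}_i\) might have loops, so either it is loopless because the \(F_i\) are arbitrary subsets (not flats), or the formula must be understood for matroids with loops as in \cite{BergetEurSpinkTseng2023}. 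That source treats general matroids, so this causes no problem.

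\textbf{The case \(\mathcal{F}=\emptyset\).} This is the "\(K\)-theoretic Tutte polynomial formula" of Berget--Eur--Spink--Tseng, which I would cite directly. Its main theorem gives
\[
\chi\bigl(\textstyle\sum_{p,q}[\wedge^p\mathcal{S}^\vee][\wedge^q\mathcal{Q}^\vee]u^pv^q\bigr)=\sum_{A\subseteq E}u^{\rk A}v^{|A|-\rk A}
\]
(up to the precise convention). Rewriting the Tutte polynomial gives
\[
u^rT_{\mathsf{M}}(1+1/u,1+v)=\sum_A u^r u^{-(r-\rk A)}v^{|A|-\rk A}=\sum_A u^{\rk A}v^{|A|-\rk A},
\]
which matches. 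The only remaining work is to align conventions: \(\mathcal{S}\) versus \(\mathcal{S}^\vee\), and the sign or normalization of \(\mathcal{Q}\). If needed, I would verify these on \(U_{1,1}\) and \(U_{1,2}\) before finishing.
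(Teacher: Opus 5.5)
Your proposal is correct and matches the paper's proof. The reduction to the empty flag is exactly the minor-decomposition property of Berget--Eur--Spink--Tseng (their Propositions~5.2 and~5.3, which the paper cites rather than rechecking by localization). For the empty flag, the paper rederives the $K$-theoretic Tutte formula from their Theorem~A via the exceptional Hirzebruch--Riemann--Roch isomorphism $\zeta$ (their Theorem~10.1 and Proposition~10.5), where you quote it directly. Your planned $U_{1,2}$ check does fix the conventions as stated: there $\mathcal{S}_{\mathsf{M}}\cong\mathcal{O}(-1)$ and $\mathcal{Q}_{\mathsf{M}}\cong\mathcal{O}(1)$ on $\PP^1$, and $\chi\bigl((1+u\,\mathcal{O}(1))(1+v\,\mathcal{O}(-1))\bigr)=1+2u+uv=u\,T_{\mathsf{M}}(1+\tfrac1u,1+v)$, so both duals are the right choice.
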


The Grothendieck weight of \(\lambda_u(\mathcal{S}_{\mathsf{M}}^\vee)\,\lambda_v(\mathcal{Q}_{\mathsf{M}}^\vee)\) can also be computed by applying the product rule to the individual Grothendieck weights of \(\lambda_u(\mathcal{S}_{\mathsf{M}}^\vee)\) and \(\lambda_v(\mathcal{Q}_{\mathsf{M}}^\vee)\). This yields the following identity.
Write \(I_{\mathsf{M}}(u) = \sum_{I \subseteq E\;\text{independent}} u^{|I|}\) for the independence polynomial of \(\mathsf{M}\), and let \(\operatorname{loop}(\mathsf{M})\) denote the number of loops of \(\mathsf{M}\).

\begin{restatable}{corollary}{tutteProduct}\label{cor:tutte-product-identity}
    For any loopless matroid \(\mathsf{M}\) of rank \(r\) on \(E = [n]\) and a generic vector \(w\in N\),
    \[
        u^r T_{\mathsf{M}}\!\left(1+\tfrac{1}{u},\,1+v\right)
        = \sum_{\substack{\sigma_{\mathcal{F}}\cap\sigma_{\mathcal{G}}=\{0\}\\ (\sigma_{\mathcal{F}}+w)\cap\sigma_{\mathcal{G}}\neq\emptyset}}
        (-1)^{\ell(\mathcal{F})+\ell(\mathcal{G})-n+1}\,
        I_{\mathsf{M}}(u)[\mathcal{F}]\,(1+v)^{\operatorname{loop}(\mathsf{M})}[\mathcal{G}],
    \]
    where \(\mathcal{F},\mathcal{G}\) range over all flags of nonempty proper subsets of \([n]\).
\end{restatable}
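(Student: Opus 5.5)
The plan is to apply the product rule (\cref{cor:product-rule-general}) to the two factors \(\lambda_u(\mathcal{S}_{\mathsf{M}}^\vee)\) and \(\lambda_v(\mathcal{Q}_{\mathsf{M}}^\vee)\) at the cone \(\gamma=\{0\}\). Then \(\dim\gamma=0\), and for flags \(\mathcal{F},\mathcal{G}\) we have \(\dim\sigma_{\mathcal{F}}=\ell(\mathcal{F})\) and \(\dim\sigma_{\mathcal{G}}=\ell(\mathcal{G})\). The sign \((-1)^{\dim\sigma+\dim\tau-n+1+\dim\gamma}\) therefore becomes exactly \((-1)^{\ell(\mathcal{F})+\ell(\mathcal{G})-n+1}\). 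On the left-hand side we use \cref{prop:gw-tautological} at the empty flag (the cone \(\{0\}\)), which gives \(u^rT_{\mathsf{M}}(1+\tfrac1u,1+v)\), since the product over the single minor \(\mathsf{M}\) is just the invariant of \(\mathsf{M}\). So the identity reduces to computing the individual Grothendieck weights of the two factors on all of \(\Sigma_{[n]}\) and checking that they equal \(I_{\mathsf{M}}(u)[\mathcal{F}]\) and \((1+v)^{\operatorname{loop}(\mathsf{M})}[\mathcal{G}]\).

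For these two weights we specialize \cref{prop:gw-tautological}.

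\textbf{First factor.} Setting \(v=0\) kills \(\lambda_v(\mathcal{Q}^\vee_{\mathsf{M}})\), so the weight of \(\lambda_u(\mathcal{S}^\vee_{\mathsf{M}})\) is \(\mathcal{F}\mapsto u^rT_{\mathsf{M}}(1+\tfrac1u,1)[\mathcal{F}]\). Here we must read the bracket as \(\prod_i u^{r_i}T_{\mathsf{M}_i}(1+\tfrac1u,1)\) over the successive minors \(\mathsf{M}_i\) of \(\mathcal{F}\), whose ranks \(r_i\) sum to \(r\). We then use the standard evaluation \(T_{\mathsf{N}}(1+x,1)=\sum_{I\text{ indep}}x^{\operatorname{rk}\mathsf{N}-|I|}\), coming from the corank–nullity expansion restricted to nullity-zero sets. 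It gives \(u^{r}T_{\mathsf{N}}(1+\tfrac1u,1)=\sum_I u^{|I|}=I_{\mathsf{N}}(u)\). This identity holds for every minor, including minors with loops or of rank zero, so the weight is \(I_{\mathsf{M}}(u)[\mathcal{F}]\).

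\textbf{Second factor.} Dividing by \(u^r\) and letting \(u\to\infty\), or equivalently setting \(u=0\) in the polynomial form, kills \(\lambda_u(\mathcal{S}^\vee_{\mathsf{M}})\). Using \(u^rT_{\mathsf{N}}(1+\tfrac1u,1+v)=\sum_{A}u^{\operatorname{rk}A}v^{|A|-\operatorname{rk}A}\), the \(u^0\) term collects the sets \(A\) of rank \(0\), that is, sets of loops. This gives \(\sum_{A\subseteq\text{loops}}v^{|A|}=(1+v)^{\operatorname{loop}(\mathsf{N})}\). So the weight of \(\lambda_v(\mathcal{Q}^\vee_{\mathsf{M}})\) is \((1+v)^{\operatorname{loop}(\mathsf{M})}[\mathcal{G}]\).

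The main obstacle is legitimacy. \cref{prop:gw-tautological} is stated as a polynomial identity in \(u,v\), and the Grothendieck weight map is \(\Z[u,v]\)-linear, so specializing the variables commutes with taking weights. The only care needed is that the \(u\)-specialization is done on the polynomial \(\sum_A u^{\operatorname{rk}A}v^{|A|-\operatorname{rk}A}\), where setting \(u=0\) is harmless, and not on the rational expression in \(1/u\). We also need \(\mathcal{F},\mathcal{G}\) to range over all flags of subsets, not only flats, because the weights live on \(\Sigma_{[n]}\). This is why minors with loops appear, and the formulas above are valid for them. Finally, substituting everything into \cref{cor:product-rule-general} with \(w\) generic yields the claimed identity.
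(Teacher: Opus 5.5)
Your proposal is correct and follows the paper's route: you apply \cref{cor:product-rule-general} at \(\gamma=\{0\}\) to \(\lambda_u(\mathcal{S}_{\mathsf{M}}^\vee)\) and \(\lambda_v(\mathcal{Q}_{\mathsf{M}}^\vee)\), read off the product's weight from \cref{prop:gw-tautological}, and get the factors' weights by specializing \(\sum_A u^{\operatorname{rk}A}v^{|A|-\operatorname{rk}A}\) at \(v=0\) and \(u=0\), which is how the paper obtains \cref{cor:gw-taut-specializations}. One small slip: dividing by \(u^r\) and letting \(u\to\infty\) is \emph{not} equivalent to setting \(u=0\). It extracts the top coefficient \(\det\mathcal{S}_{\mathsf{M}}^\vee\) and gives \(T_{\mathsf{M}}(1,1+v)\), which equals \(2+v\neq 1\) for \(U_{1,2}\) (two parallel elements). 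The computation you actually carry out, taking the \(u^0\) coefficient, is nonetheless the correct one.
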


\subsection*{Acknowledgements}
The author thanks Eric Katz for his mentorship and guidance on this project, and Connor Simpson and Matt Larson for helpful comments on an earlier draft.


\subsection*{Outline of the paper}
Section~2 recalls background on the \(K\)-ring of smooth toric varieties, Grothendieck weights, and the permutohedral fan.
Section~3 introduces strongly unimodular fans and establishes their basic properties.
Section~4 proves the \(K\)-balancing condition (\cref{thm:k-balancing}) for complete strongly unimodular fans.
Section~5 derives the product rule (\cref{cor:product-rule-general}).
Section~6 specializes these results to the permutohedral fan \(\Sigma_{[n]}\) and computes the Grothendieck weights of generalized permutohedra.
Section~7 extends the framework to matroids, giving a combinatorial characterization of Grothendieck weights on matroidal fans.
Section~8 defines and studies the motivic Chern class of a matroid.
Section~9 computes the Grothendieck weights of tautological bundles on the permutohedral variety and derives a Tutte polynomial identity.

\section{Preliminaries}
In this section, we collect some preliminary results on Grothendieck weights and permutohedra. Most of the results are well-known, but we include them here for completeness and to set up the notation.

\subsection{The Grothendieck ring of a smooth toric variety}\label{subsec:toric-k-ring}
Let \(\Sigma\) be a unimodular fan in a real vector space \(N_\R\) with lattice \(N\). Let \(M=\Hom(N,\Z)\) be the dual lattice, and let \(T=\operatorname{Spec} \C[M]\) be the torus. Let \(X_\Sigma\) be the toric variety associated with the fan \(\Sigma\), with dense torus \(T\). For a cone \(\sigma\in \Sigma\), we use \(V(\sigma)\) to denote the closure of the \(T\)-orbit \(O(\sigma)\) that corresponds to \(\sigma\).

For an algebraic variety \(X\), one can define its \(K\)-ring \(K(X)\) as the Grothendieck group of locally free sheaves of finite rank on \(X\), with multiplication given by tensor product. If every coherent sheaf on \(X\) admits a finite locally free resolution, then the natural map \(K(X)\to G_0(X)\) to the Grothendieck group of coherent sheaves is an isomorphism. This applies to the smooth toric varieties considered below, and we use this identification throughout. The \(K\)-ring of \(X_\Sigma\) can be described by the following theorem, which is a restatement of \cite[Theorem~1]{CCKR2025}.

\begin{theorem}\label{thm:K_ring_toric}
    Let \(\Sigma\) be a unimodular fan in \(N_\R\), and let \(X_\Sigma\) be the associated smooth toric variety. Then
    \[
        K(X_\Sigma)\cong
        \frac{\mathbb{Z}[x_\rho\mid \rho\in\Sigma(1)]}
        {I_{SR}+I_{\mathrm{lin}}},
    \]
    where
    \[
        I_{\mathrm{SR}}=\left\langle\prod_{\rho\in S}x_\rho\ \middle|\ S\subseteq\Sigma(1),\ \operatorname{cone}(S)\notin\Sigma\right\rangle
    \]
    and
    \[
        I_{\mathrm{lin}}=\left\langle
        \prod_{\langle m,u_\rho\rangle>0}(1-x_\rho)^{\langle m,u_\rho\rangle}
        -\prod_{\langle m,u_\rho\rangle<0}(1-x_\rho)^{-\langle m,u_\rho\rangle}
        \ \middle|\ m\in M
        \right\rangle.
    \]
    Under this isomorphism, each variable \(x_\rho\) maps to \([\mathcal{O}_{V(\rho)}]\in K(X_\Sigma)\).
\end{theorem}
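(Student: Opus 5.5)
The plan is to compare the stated presentation with the standard Laurent-polynomial presentation of \(K(X_\Sigma)\) by line bundles of torus-invariant divisors. That presentation is due to Vezzosi--Vistoli for arbitrary smooth toric varieties, and to Klyachko and Morelli in the complete case. It reads
\[
K(X_\Sigma)\cong \Z[L_\rho^{\pm1}\mid \rho\in\Sigma(1)]\Big/\Big(\big\langle \textstyle\prod_{\rho\in S}(1-L_\rho) : \operatorname{cone}(S)\notin\Sigma\big\rangle + \big\langle \textstyle\prod_\rho L_\rho^{\langle m,u_\rho\rangle}-1 : m\in M\big\rangle\Big),
\]
with \(L_\rho\mapsto[\mathcal O(-D_\rho)]\), where \(D_\rho=V(\rho)\).

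First I will identify the generators. The exact sequence \(0\to\mathcal O(-D_\rho)\to\mathcal O\to\mathcal O_{D_\rho}\to0\) gives \([\mathcal O(-D_\rho)]=1-x_\rho\). So the map \(\Z[x_\rho]\to K(X_\Sigma)\), \(x_\rho\mapsto[\mathcal O_{V(\rho)}]\), corresponds to \(L_\rho\mapsto 1-x_\rho\). Next I check that both ideals are killed.

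\begin{itemize}
\item For \(S\) not spanning a cone, the divisors \(D_\rho\) with \(\rho\in S\) meet transversally (smoothness and unimodularity), so all higher Tor sheaves vanish. Hence \(\prod_{\rho\in S}x_\rho=[\mathcal O_{\bigcap_{\rho\in S} D_\rho}]\), and this is \(0\) because the intersection is empty.
\item For \(m\in M\), \(\operatorname{div}(\chi^m)=\sum_\rho\langle m,u_\rho\rangle D_\rho\) is principal. Therefore \(\prod_\rho(1-x_\rho)^{\langle m,u_\rho\rangle}=1\) in \(K(X_\Sigma)\). Moving the factors with negative exponent to the other side gives exactly the generators of \(I_{\mathrm{lin}}\).
\end{itemize}

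Thus we get a well-defined ring map \(\Phi\) from \(\Z[x_\rho]/(I_{SR}+I_{\mathrm{lin}})\) to \(K(X_\Sigma)\).

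To show \(\Phi\) is an isomorphism, I will build an inverse from the Laurent presentation by sending \(L_\rho\mapsto 1-x_\rho\). This requires \(1-x_\rho\) to be a unit in \(R=\Z[x_\rho]/(I_{SR}+I_{\mathrm{lin}})\). It suffices to show that each \(x_\rho\) is nilpotent in \(R\), since then \((1-x_\rho)^{-1}=\sum_k x_\rho^k\).

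The relations then match. The Laurent relations \(\prod L_\rho^{\langle m,u_\rho\rangle}=1\) become, after multiplying through by the inverses, precisely the \(I_{\mathrm{lin}}\) generators. The SR relations match literally. So the two quotient rings are mutually inverse, and \(\Phi\) is an isomorphism.

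The main obstacle is the nilpotence of \(x_\rho\) in \(R\), which must be proved purely algebraically from the relations. I would first try to show that \(R\) is additively spanned by the square-free monomials \(x_\sigma=\prod_{\rho\in\sigma}x_\rho\), \(\sigma\in\Sigma\). Since a cone has at most \(d=\dim N_\R\) rays, a product of more than \(d\) distinct variables lies in \(I_{SR}\). So it is enough to rewrite \(x_\rho^2\) as a combination of square-free monomials.

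The rewriting goes as follows. Fix a maximal cone \(\sigma\ni\rho\). By unimodularity there is an \(m\in M\) with \(\langle m,u_\rho\rangle=1\) and \(\langle m,u_{\rho'}\rangle=0\) for the other rays \(\rho'\) of \(\sigma\). The corresponding \(I_{\mathrm{lin}}\) relation expresses \(x_\rho\) times a term, modulo \(I_{SR}\), through variables of rays outside \(\sigma\). Multiplying by \(x_\rho\) and using \(I_{SR}\) to kill products with rays not adjacent to \(\rho\) should lower the degree in \(x_\rho\).

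This is essentially the Stanley--Reisner-type argument in \cite{CCKR2025}. If it becomes delicate for non-complete fans, I would instead invoke their result directly, or pass through \(K_T(X_\Sigma)\). There one works locally on each affine chart \(U_\sigma\cong\C^k\times T'\), where \(x_\rho\) is visibly nilpotent, and uses that \(K_T\) is the ring of piecewise Laurent polynomials to glue.
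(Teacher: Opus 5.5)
The paper does not prove this statement; it quotes it from \cite[Theorem~1]{CCKR2025}. Your route is a legitimate alternative: derive it from the Laurent presentation via \(L_\rho\mapsto 1-x_\rho\). Your checks that \(I_{SR}\) and \(I_{\mathrm{lin}}\) vanish in \(K(X_\Sigma)\), and that the two sets of relations correspond, are fine. However, everything beyond the cited Laurent presentation sits in the step you flag as the main obstacle. Since \(\Phi\) factors through \(R[(1-x_\rho)^{-1}]_\rho\), which is exactly the Laurent ring, \(\Phi\) is injective if and only if every \(1-x_\rho\) is a non-zero-divisor in \(R\). Your sketch does not establish this, for four reasons.
\begin{enumerate}
\item Spanning by square-free monomials does not give nilpotence. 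In \(\Z[x]/(x^2-x)\) the element \(x\) is idempotent. What you actually need is that the rewriting never lowers total degree.
\item The relation does not lower the degree in \(x_\rho\). Write the \(I_{\mathrm{lin}}\) generator for your \(m\) as \((1-x_\rho)(1+A')=1+B'\), where \(A',B'\) have no constant term and use only rays outside \(\sigma\). This gives \(x_\rho^2=x_\rho(A'-B')-x_\rho^2A'\), and the last term still contains \(x_\rho^2\).
\item To continue you must rewrite mixed monomials such as \(x_\rho^2x_{\rho'}\). If \(m\) does not vanish on \(\rho'\), you can produce \(x_\rho x_{\rho'}^2\), which has the same support and degree, so the process need not terminate.
\item The \(K_T\) fallback does not help. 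Nilpotence of \(x_\rho\) in \(K(X_\Sigma)\) or on affine charts says nothing about \(R\) until you know \(\Phi\) is injective. Invoking \cite{CCKR2025} simply cites the statement.
\end{enumerate}

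The fix is a support-increasing induction, with \(m\) adapted to the whole support of the monomial. Let \(x^\alpha\) be a monomial whose support is a cone \(\tau\); otherwise \(x^\alpha\in I_{SR}\). Pick \(\rho\in\tau(1)\) with \(\alpha_\rho\ge 2\). By unimodularity of \(\tau\), choose \(m\in M\) with \(\langle m,u_\rho\rangle=1\) and \(\langle m,u_{\rho'}\rangle=0\) for \(\rho'\in\tau(1)\setminus\{\rho\}\). Then \(A',B'\) involve only rays outside \(\tau(1)\), and
\[
x^\alpha=\frac{x^\alpha}{x_\rho}\,(A'-B')-x^\alpha A' \qquad\text{in } R.
\]
Every monomial on the right has support strictly containing \(\tau(1)\) and total degree at least \(|\alpha|\). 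More than \(d\) rays never span a cone, so descending induction on the size of the support shows the following: every monomial of degree \(k\) is a \(\Z\)-combination of \(x_\sigma\) with \(\sigma\in\Sigma\) and \(\dim\sigma\ge k\). Hence all monomials of degree \(>d\) vanish in \(R\), so \(x_\rho^{d+1}=0\), \(1-x_\rho\) is a unit, and your inverse map is defined. This argument works equally for non-complete fans, so your worry about that case is unnecessary.
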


Extending the notation, for a cone \(\sigma\in\Sigma\), we denote the class of the structure sheaf \(\mathcal{O}_{V(\sigma)}\) by
\[
    x_\sigma=\prod_{\rho\in\sigma(1)} x_\rho.
\]
In particular, \(x_\emptyset=1\).

Furthermore, if \(\Sigma\) is complete, there is a natural map \(\chi:K(X_\Sigma)\to\Z\) taking a coherent sheaf \(\mathcal{F}\) to its Euler characteristic \(\chi(\mathcal{F})\). It is well known that \(\chi(x_\sigma)=1\) for all \(\sigma\in\Sigma\). In fact, \(\chi(\mathcal{O}_X)=1\) for every smooth rational proper variety \(X\); see, for example, \cite[Theorem~9.2.3]{CoxLittleSchenck2011}.

Another important result is that \(\chi\) can be used to define a perfect pairing on \(K(X)\).

\begin{theorem}[{\cite[Theorems~1.2 and~1.3]{AndersonPayne2015}}]\label{thm:euler-pairing-perfect}
    Suppose \(\Sigma\) is complete and unimodular, and let \(X_\Sigma\) be the associated smooth complete toric variety. Then
    \[
        \langle a,b\rangle:=\chi(ab),\qquad a,b\in K(X_\Sigma),
    \]
    defines a perfect pairing
    \[
        K(X_\Sigma)\otimes_{\mathbb{Z}}K(X_\Sigma)\longrightarrow \mathbb{Z}.
    \]
\end{theorem}

\subsection{Grothendieck weights}
Following \cite[Definition~2.4]{Shah2022}, we use the following definition.
\begin{definition}[Grothendieck weight]\label{def:gw}
    Let \(\Sigma\) be a complete unimodular fan, and let \(X_\Sigma\) be the associated toric variety. A \emph{Grothendieck weight} on \(\Sigma\) is a function \(g:\Sigma\to\mathbb{Z}\) such that for every relation
    \[
        \sum_{\sigma\in\Sigma} c_\sigma[\mathcal{O}_{V(\sigma)}]=0
        \quad\text{in } K(X_\Sigma),
    \]
    one has
    \[
        \sum_{\sigma\in\Sigma} c_\sigma g(\sigma)=0.
    \]
    The abelian group of all Grothendieck weights on \(\Sigma\) is denoted by \(\GW(\Sigma)\).
\end{definition}

\begin{proposition}\label{prop:K-bijection-GW}
    Let \(\Sigma\) be a complete and unimodular fan. The map
    \[
        \Phi:K(X_\Sigma)\to \GW(\Sigma),\qquad
        \Phi(\alpha)(\sigma):=\chi(\alpha\cdot x_\sigma)
    \]
    is an isomorphism of abelian groups.
\end{proposition}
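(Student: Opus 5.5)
The plan is to combine three facts: the classes \(x_\sigma\) generate \(K(X_\Sigma)\) as an abelian group, the Euler characteristic pairing is perfect (\cref{thm:euler-pairing-perfect}), and the definition of a Grothendieck weight. First, \(\Phi\) is well defined and lands in \(\GW(\Sigma)\). If \(\sum_\sigma c_\sigma x_\sigma=0\) in \(K(X_\Sigma)\), then multiplying by \(\alpha\) and applying \(\chi\) gives
\[
\sum_\sigma c_\sigma \Phi(\alpha)(\sigma)=\chi\Bigl(\alpha\cdot\sum_\sigma c_\sigma x_\sigma\Bigr)=0 .
\]
Additivity of \(\Phi\) follows from the linearity of \(\chi\) and of multiplication.

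Next, I will show that the \(x_\sigma\) span \(K(X_\Sigma)\) additively. By \cref{thm:K_ring_toric}, \(K(X_\Sigma)\) is generated as a ring by the \(x_\rho\), so it is spanned as a group by monomials \(\prod_\rho x_\rho^{a_\rho}\). Any monomial whose support is not a cone vanishes by \(I_{\mathrm{SR}}\). For a monomial supported on a cone \(\sigma\), I reduce powers: \(x_\rho^2\) should be rewritten as a \(\Z\)-combination of squarefree monomials supported in \(\operatorname{Star}(\rho)\). To do this, pick \(m\in M\) with \(\langle m,u_\rho\rangle=1\) and \(\langle m,u_{\rho'}\rangle=0\) for the other rays \(\rho'\) of \(\sigma\); such an \(m\) exists by unimodularity. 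The linear relation then expresses \(1-x_\rho\) times a product over other rays in terms of products over rays not in \(\sigma\). Multiplying through by \(x_\rho\) and by the rest of the monomial lets me inductively lower the total degree on \(\sigma\)'s rays, or move the support to larger cones.

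I expect this degree-reduction induction to be the main technical step. If it becomes messy, I would instead invoke the standard fact that \(K(X_\Sigma)\) is generated by classes of structure sheaves of torus-invariant subvarieties, which follows from the localization sequence for the orbit stratification together with \(K(\text{torus})=\Z\), applied orbit by orbit. Using this fact as a black box is cleaner.

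Injectivity then follows. If \(\Phi(\alpha)=0\), then \(\chi(\alpha x_\sigma)=0\) for all \(\sigma\). Since the \(x_\sigma\) span, \(\chi(\alpha\beta)=0\) for all \(\beta\), and perfectness of the pairing gives \(\alpha=0\).

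For surjectivity, take \(g\in\GW(\Sigma)\). Spanning makes the assignment \(\lambda\bigl(\sum_\sigma c_\sigma x_\sigma\bigr):=\sum_\sigma c_\sigma g(\sigma)\) well defined on all of \(K(X_\Sigma)\): this is exactly the Grothendieck weight condition. So \(\lambda\in\Hom(K(X_\Sigma),\Z)\). By perfectness of the Euler pairing, in the strong sense that \(K\to\Hom(K,\Z)\) is an isomorphism (\(K(X_\Sigma)\) is free of finite rank here), there is an \(\alpha\) with \(\chi(\alpha\beta)=\lambda(\beta)\) for all \(\beta\). In particular \(\Phi(\alpha)(\sigma)=g(\sigma)\), so \(\Phi\) is an isomorphism.
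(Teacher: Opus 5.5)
Your proposal is correct and follows the paper's proof. Injectivity comes from the \(x_\sigma\) spanning \(K(X_\Sigma)\) together with perfectness of the Euler pairing. Surjectivity comes from extending \(g\) to a homomorphism \(K(X_\Sigma)\to\Z\), well defined exactly by the Grothendieck weight condition, and representing that homomorphism via the pairing. The only additions are that you check \(\Phi(\alpha)\) actually satisfies the weight condition and sketch why the \(x_\sigma\) span; the paper leaves the first implicit and takes the second directly from the toric presentation of \(K(X_\Sigma)\).
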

\begin{proof}
    If \(\Phi(\alpha)=0\), then \(\chi(\alpha\cdot x_\sigma)=0\) for every \(\sigma\in\Sigma\). Since the classes \(x_\sigma\) span \(K(X_\Sigma)\) by \cref{thm:K_ring_toric}, the perfectness of the Euler pairing in \cref{thm:euler-pairing-perfect} implies \(\alpha=0\). Thus \(\Phi\) is injective.

    Now let \(g\in \GW(\Sigma)\). Since \(g\) vanishes on all linear relations among the classes \(x_\sigma\), the assignment
    \[
        \lambda_g\Bigl(\sum_\sigma c_\sigma x_\sigma\Bigr)\coloneq \sum_\sigma c_\sigma g(\sigma)
    \]
    defines a well-defined homomorphism \(\lambda_g:K(X_\Sigma)\to \Z\). By \cref{thm:euler-pairing-perfect}, there exists \(\alpha\in K(X_\Sigma)\) such that
    \[
        \lambda_g(\beta)=\chi(\alpha\cdot\beta)
    \]
    for all \(\beta\in K(X_\Sigma)\). In particular,
    \[
        g(\sigma)=\lambda_g(x_\sigma)=\chi(\alpha\cdot x_\sigma)=\Phi(\alpha)(\sigma)
    \]
    for every \(\sigma\in\Sigma\). Hence \(\Phi\) is surjective.
\end{proof}

We endow \(\GW(\Sigma)\) with the unique ring structure for which \(\Phi\) is a ring isomorphism.

For \(\alpha\in K(X_\Sigma)\), write \(g_\alpha:=\Phi(\alpha)\). Equivalently, \(g_\alpha:\Sigma\to\Z\) is given by
\[
    g_\alpha(\sigma)=\chi(\alpha\cdot x_\sigma),\qquad \sigma\in\Sigma.
\]

In particular, \(g_\alpha(\emptyset)=\chi(\alpha)\).

\subsection{The permutohedron and the permutohedral toric variety}\label{subsec:permutohedron}
Let \(n\ge 2\), and write \([n]=\{1,2,\dots,n\}\). The (classical) permutohedron is
\[
    \Pi_n:=\operatorname{conv}\bigl\{(\pi(1),\pi(2),\dots,\pi(n))\in\mathbb{R}^n:\pi\in\mathfrak{S}_n\bigr\},
\]
see \cite{Postnikov2009}. It lies in the affine hyperplane
\[
    x_1+x_2+\cdots+x_n=\frac{n(n+1)}2.
\]
We use the lattice
\[
    N=\mathbb{Z}^n/\mathbb{Z}(1,\dots,1),
    \qquad
    M=\Hom(N,\mathbb{Z})\cong\{(a_1,\dots,a_n)\in\mathbb{Z}^n:\textstyle\sum_i a_i=0\}.
\]
For every nonempty proper subset \(S\subsetneq [n]\), let
\[
    \bar{e}_S\in N
\]
be the image of \(\sum_{i\in S}e_i\) in \(N\), where \(\bar e_1,\dots,\bar e_n\) are the standard basis vectors in \(\mathbb{Z}^n\).

A flag of proper nonempty subsets is a chain
\[
    \mathcal{F}:\emptyset\subsetneq F_1\subsetneq F_2\subsetneq\cdots\subsetneq F_k\subsetneq [n].
\]
We define
\[
    \sigma_{\mathcal{F}}:=\operatorname{cone}(\bar e_{F_1},\bar e_{F_2},\dots,\bar e_{F_k})\subset N_{\mathbb{R}}.
\]
If \(\pi=S_1|S_2|\cdots|S_r\) is an ordered set partition of \([n]\), its associated flag is
\[
    \mathcal{F}_\pi:\emptyset\subsetneq S_1\subsetneq S_1\cup S_2\subsetneq\cdots\subsetneq S_1\cup\cdots\cup S_{r-1}\subsetneq [n].
\]

\begin{proposition}[cf.~\cite{Postnikov2009}]
    The cones \(\sigma_{\mathcal{F}}\) form a complete unimodular fan \(\Sigma_{[n]}\), and \(\Sigma_{[n]}\) is the normal fan of \(\Pi_n\). Equivalently, cones of \(\Sigma_{[n]}\) are indexed by ordered set partitions of \([n]\), via \(\pi\leftrightarrow\mathcal{F}_\pi\).
\end{proposition}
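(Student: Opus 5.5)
The claim has three parts: the cones \(\sigma_{\mathcal{F}}\) form a fan, that fan is complete and unimodular, and it is the normal fan of \(\Pi_n\). The plan is to describe each cone by inequalities and read all three properties off that description.

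\textbf{Step 1: describe the cones by inequalities.} Fix a flag \(\mathcal{F}\) with \(F_0=\emptyset\) and \(F_{k+1}=[n]\). Let \(\pi=S_1|\cdots|S_{k+1}\) be the corresponding ordered set partition, with blocks \(S_i=F_i\setminus F_{i-1}\). A point \(x\in N_\R\) is given by a representative \((x_1,\dots,x_n)\), defined only up to adding multiples of \((1,\dots,1)\).

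I claim that \(\sigma_{\mathcal{F}}\) is exactly the set of \(x\) for which:
\begin{itemize}
\item coordinates are constant on each block \(S_i\);
\item the block values weakly decrease, i.e. the value on \(S_i\) is at least the value on \(S_{i+1}\).
\end{itemize}
This is a direct check. The point \(\sum_i c_i\bar e_{F_i}\) with \(c_i\ge 0\) has value \(c_i+\cdots+c_k\) on block \(S_i\), so it satisfies both conditions. Conversely, take successive differences of the block values to recover nonnegative \(c_i\).

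\textbf{Step 2: deduce the fan and completeness.} Every \(x\in N_\R\) determines a unique ordered set partition: group equal coordinates into blocks and order the blocks by decreasing value. So each \(x\) lies in the relative interior of exactly one cone \(\sigma_{\mathcal{F}}\). This gives completeness.

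The faces of \(\sigma_{\mathcal{F}}\) are the cones obtained by turning some of the inequalities into equalities. These are precisely the \(\sigma_{\mathcal{G}}\) with \(\mathcal{G}\subseteq\mathcal{F}\) a subflag. Hence:
\begin{itemize}
\item faces of cones in the collection are again cones in the collection;
\item \(\sigma_{\mathcal{F}}\cap\sigma_{\mathcal{G}}=\sigma_{\mathcal{F}\cap\mathcal{G}}\), since a point in both cones has its unique flag contained in both \(\mathcal{F}\) and \(\mathcal{G}\).
\end{itemize}
Therefore the cones form a fan.

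\textbf{Step 3: unimodularity.} It is enough to treat the maximal cones, whose flags are complete chains \(F_i=\{a_1,\dots,a_i\}\) for \(i=1,\dots,n-1\). The vectors \(\bar e_{F_i}\) are related to \(\bar e_{a_1},\dots,\bar e_{a_{n-1}}\) by a unitriangular integer change of basis. The vectors \(\bar e_{a_1},\dots,\bar e_{a_{n-1}}\) form a \(\Z\)-basis of \(N=\Z^n/\Z(1,\dots,1)\), because \(\bar e_{a_n}=-\sum_{i<n}\bar e_{a_i}\). So the rays of each maximal cone form a lattice basis, and every cone is unimodular.

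\textbf{Step 4: normal fan and indexing.} For the normal-fan statement, use the pairing of \(x\) with points of \(\Pi_n\). A linear functional \(x\) is maximized over the vertices \(\pi\in\mathfrak S_n\) exactly by those permutations that assign larger values to coordinates where \(x\) is larger. This is the rearrangement inequality.

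Consequently, the face of \(\Pi_n\) maximized by \(x\) depends only on the ordered set partition of \(x\) from Step 2. Distinct partitions give distinct faces, because the face determines which coordinates are forced to take which sets of values. So the normal cones of the faces of \(\Pi_n\) are exactly the \(\sigma_{\mathcal{F}}\).

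The correspondence \(\pi\leftrightarrow\mathcal{F}_\pi\) between ordered set partitions and flags is an evident bijection, which gives the final indexing statement.

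The main obstacle is keeping the direction conventions consistent: the order of inequalities in Step 1 versus maximizing rather than minimizing in Step 4. The rest is routine.
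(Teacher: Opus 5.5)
The paper gives no proof of this proposition; it cites Postnikov. Your proposal is therefore a self-contained substitute for that citation rather than a variant of an in-paper argument, and it is correct.

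Your Step 1 description of \(\sigma_{\mathcal F}\) is exactly what the paper later uses without comment, for instance in the proof of \cref{cor:simplicial-generator-gw}. That description is: coordinates constant on the blocks \(F_i\setminus F_{i-1}\), with block values weakly decreasing, and strictly decreasing on the relative interior. So your route buys an explicit derivation of that description, while the citation buys brevity.

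Steps 2--3 are complete. The relative interiors partition \(N_\R\), faces correspond to subflags, and \(\sigma_{\mathcal F}\cap\sigma_{\mathcal G}=\sigma_{\mathcal F\cap\mathcal G}\). For unimodularity, note that \(n-1\) vectors spanning the rank-\((n-1)\) lattice \(N\) automatically form a basis. The paper later proves the stronger strong unimodularity in \cref{eg:braid_is_strongly_unimodular}.

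The direction issue you flag as the ``main obstacle'' is not an obstacle for this statement. The paper's convention is the inner one: \(h_P=\min\), and \(\face_{\mathcal F}\) is cut out by minimizing. You maximize instead, but the braid fan is centrally symmetric. Indeed \(-\bar e_F=\bar e_{[n]\setminus F}\) in \(N\), so \(-\sigma_{\mathcal F}\) is the cone of the flag of complements. Equivalently, your Step 4 runs verbatim with ``smaller'' in place of ``larger''.

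What does change is the dictionary between faces and cones. Under minimization, the face attached to \(\sigma_{\mathcal F}\) gives \(F_1\) the smallest values. This matters for the paper's later face computations but not for the equality of fans.

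One point should be made explicit. \(\Pi_n\) lies in a translate of \(M_\R\), so for \(x\in N_\R\) the pairing \(\langle p,x\rangle\) is defined only up to an additive constant. All points of \(\Pi_n\) have the same coordinate sum, so differences, and hence the optimizing face, are well defined.
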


The toric variety \(X_{[n]}\) associated to \(\Sigma_{[n]}\) is called the \emph{permutohedral variety}. Since \(\Sigma_{[n]}\) is complete and unimodular (hence smooth) and also polytopal (as a normal fan), \(X_{[n]}\) is a smooth projective toric variety of dimension \(n-1\); see \cite{Postnikov2009,CoxLittleSchenck2011}.
The nef torus-invariant divisors on \(X_{[n]}\) are in one-to-one correspondence with generalized permutohedra.

\begin{definition}
    A lattice polytope \(P\subset M_\R\) is called a \emph{generalized permutohedron} if
    its normal fan \(\mathcal{N}(P)\) coarsens \(\Sigma_{[n]}\).
\end{definition}

A generalized permutohedron \(P\) defines a torus-invariant divisor \(D_P\) as follows:
\[
    D_P=\sum_{\emptyset\subsetneq S\subsetneq E}-\min_{m\in P}\langle m, \bar{e}_S \rangle [V(\rho_S)],
\]
where \(\rho_S\) denotes the ray \(\cone(\bar{e}_S)\) in the fan \(\Sigma_{[n]}\).

\begin{proposition}[see, e.g., {\cite[Section 2.7]{BergetEurSpinkTseng2023}}]
    A lattice polytope \(P\) determines a nef torus-invariant divisor \(D_P\) if and only if \(P\) is a generalized permutohedron.
\end{proposition}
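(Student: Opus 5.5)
The claim is that \(D_P\) is nef exactly when \(P\) is a generalized permutohedron, i.e.\ when \(\mathcal{N}(P)\) coarsens \(\Sigma_{[n]}\). I would reduce it to the standard toric dictionary between nef torus-invariant divisors on a complete toric variety and polytopes whose normal fans coarsen the fan (\cite[Theorem~6.1.7, Proposition~6.2.5]{CoxLittleSchenck2011}).

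For a torus-invariant Cartier divisor \(D=\sum_\rho a_\rho D_\rho\) on the complete smooth toric variety \(X_{[n]}\), set
\[
P_D=\{m\in M_\R:\langle m,u_\rho\rangle\ge -a_\rho \text{ for all }\rho\}.
\]
By unimodularity, on each maximal cone \(\sigma\) there is a unique \(m_\sigma\in M\) with \(\langle m_\sigma,u_\rho\rangle=-a_\rho\) for \(\rho\in\sigma(1)\); this is the Cartier data. The criterion I would invoke is that \(D\) is nef (equivalently basepoint free, on a complete toric variety) if and only if every \(m_\sigma\) lies in \(P_D\). In that case \(P_D=\operatorname{conv}\{m_\sigma\}\) is a lattice polytope, \(\mathcal{N}(P_D)\) coarsens \(\Sigma\), and the support function of \(D\) is \(u\mapsto\min_{m\in P_D}\langle m,u\rangle\).

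\emph{Forward direction.} Suppose \(P\) is a generalized permutohedron. Each maximal cone \(\sigma\) of \(\Sigma_{[n]}\) lies in some maximal cone of \(\mathcal{N}(P)\), which corresponds to a vertex \(m_\sigma\) of \(P\). Then \(m_\sigma\) minimizes \(\langle\cdot,u\rangle\) over \(P\) for every \(u\in\sigma\). In particular \(\langle m_\sigma,\bar e_S\rangle=\min_{m\in P}\langle m,\bar e_S\rangle\) for each ray \(\rho_S\subset\sigma\), so the \(m_\sigma\) are exactly the Cartier data of \(D_P\).
\begin{itemize}
\item Each \(m_\sigma\) lies in \(P_{D_P}\), because \(P\subseteq P_{D_P}\) by definition of the minima.
\item Hence \(D_P\) is nef.
\end{itemize}
One also gets \(P_{D_P}=P\) for free: \(P_{D_P}=\operatorname{conv}\{m_\sigma\}\) by the criterion, and \(\{m_\sigma\}\) is the full vertex set of \(P\), since every maximal cone of \(\mathcal{N}(P)\) contains some maximal cone of \(\Sigma_{[n]}\).

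\emph{Converse direction.} Suppose \(D_P\) is nef. Then \(P_{D_P}\) has normal fan coarsening \(\Sigma_{[n]}\), so it suffices to show \(P_{D_P}=P\). The inclusion \(P\subseteq P_{D_P}\) holds by definition. Equality is the main obstacle, because \(P\) is only known to be a lattice polytope, so its facet normals need not be among the \(\bar e_S\). I would argue via support functions:
\begin{itemize}
\item By the criterion, the support function \(\psi\) of \(D_P\) equals \(h_{P_{D_P}}(u)=\min_{m\in P_{D_P}}\langle m,u\rangle\), a concave function that is linear on the cones of \(\Sigma_{[n]}\).
\item The function \(h_P(u)=\min_{m\in P}\langle m,u\rangle\) is concave and satisfies \(h_P\ge h_{P_{D_P}}\).
\item The two functions agree on every ray \(\bar e_S\), by the definition of \(D_P\).
\item On each maximal cone, \(h_{P_{D_P}}\) is linear, and every point \(u\) is a nonnegative combination of the ray generators. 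Concavity and positive homogeneity of \(h_P\) give \(h_P(u)\ge\sum c_S h_P(\bar e_S)=h_{P_{D_P}}(u)\), which by itself is no new information.
\end{itemize}
So this naive comparison does not close the gap, and I would expect to need an extra hypothesis. Indeed, for a lattice polytope with a facet normal not of the form \(\bar e_S\), replacing \(P\) by its "permutohedral hull" \(P_{D_P}\) yields the same divisor. Thus the converse really says: \(D_P\) is nef and \(D_P\) determines \(P\), meaning \(P=P_{D_P}\), if and only if \(P\) is a generalized permutohedron.

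I would therefore prove the statement in that corrected form, or else restrict to polytopes \(P\) that are cut out by inequalities \(\langle m,\bar e_S\rangle\ge c_S\). Under that restriction \(P=P_{D_P}\) holds immediately, and the argument reduces to the toric dictionary above.
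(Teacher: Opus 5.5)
The paper gives no argument of its own here, only the pointer to Berget--Eur--Spink--Tseng. That reference rests on the same toric dictionary you use: on a complete toric variety, nef \(\Leftrightarrow\) basepoint free \(\Leftrightarrow\) \(m_\sigma\in P_D\) for every maximal \(\sigma\), and in that case \(P_D=\operatorname{conv}\{m_\sigma\}\) and \(\mathcal{N}(P_D)\) coarsens the fan. Your forward direction, including \(P_{D_P}=P\), is correct and is exactly that argument.

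Your doubt about the converse is justified: read literally, the ``only if'' direction is false. However, your reasoning only shows that \(P\mapsto D_P\) is not injective on arbitrary lattice polytopes. To refute the literal converse you need a lattice polytope that is not a generalized permutohedron but has \(D_P\) nef.

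Such examples exist. Take \(n=3\) and \(P=\operatorname{conv}\{0,m_0\}\) with \(m_0=e_1+e_2-2e_3\). For \(S=\{1\},\{2\},\{3\},\{1,2\},\{1,3\},\{2,3\}\), the minima \(\min_{m\in P}\langle m,\bar e_S\rangle\) are \(0,0,-2,0,-1,-1\). These are also the minima for \(Q=[0,e_1-e_3]+[0,e_2-e_3]\) (in fact \(Q=P_{D_P}\)), which is a generalized permutohedron. Hence \(D_P=D_Q\) is nef. Yet \(\langle m_0,\bar e_{\{1\}}\rangle=1\) and \(\langle m_0,\bar e_{\{1,3\}}\rangle=-1\), so \(h_P=\min(0,\langle m_0,\cdot\rangle)\) is not linear on the maximal cone \(\operatorname{cone}(\bar e_{\{1\}},\bar e_{\{1,3\}})\). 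Thus \(P\) is not a generalized permutohedron. In fact, for \(n=3\) every \(D_P\) is nef: each ray of the hexagonal fan is the sum of its two neighbours, and superadditivity of \(h_P\) is then exactly the wall-crossing inequality.

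So the proposition has to be read as the one-to-one correspondence announced in the sentence preceding it, equivalently your corrected form with \(P=P_{D_P}\). Your argument proves that version. Injectivity of \(P\mapsto D_P\) on generalized permutohedra follows from \(P_{D_P}=P\). Surjectivity holds because, for nef \(D=\sum_\rho a_\rho D_\rho\), the lattice polytope \(P_D\) has \(\mathcal{N}(P_D)\) coarsening \(\Sigma_{[n]}\) and \(\min_{m\in P_D}\langle m,u_\rho\rangle=-a_\rho\), i.e.\ \(D_{P_D}=D\).
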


\section{Strongly unimodular fans}
Although the main focus of this paper is the permutohedron, our arguments work for the larger class defined below.

\begin{definition}\label{def:strongly-unimodular}
    A fan \(\Sigma\subseteq N_\R\) is called \emph{strongly unimodular} if it is unimodular and, for any two cones \(\sigma,\tau\in\Sigma\),
    \[
        [N:N_\sigma+N_\tau]=1 \text{ or }\infty.
    \]
    Here unimodular means that each cone is generated by part of a \(\Z\)-basis of \(N\).
    Thus, in addition, if \((N_\sigma)_\R\) and \((N_\tau)_\R\) generate the whole \(N_\R\), then the index of \(N_\sigma+N_\tau\) is one.
\end{definition}

We collect some useful properties of strongly unimodular fans in the following proposition.

\begin{proposition}\label{proposition:strongly-unimodular-properties}
    Let \(\Sigma\subseteq N_\R\) be strongly unimodular.
    \begin{enumerate}
        \item Every subfan \(\Sigma'\subseteq \Sigma\) is strongly unimodular.
        \item For each \(\sigma\in\Sigma\), the star fan of \(\sigma\) is strongly unimodular.
        \item If two fans \(\Sigma_1\) and \(\Sigma_2\) are strongly unimodular, then so is \(\Sigma_1\times \Sigma_2\).
    \end{enumerate}
\end{proposition}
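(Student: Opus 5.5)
The plan is to verify each of the three items directly from \cref{def:strongly-unimodular}, treating unimodularity and the index condition separately in each case.

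For (1), nothing needs to be done. A subfan \(\Sigma'\) lives in the same lattice \(N\). Its cones are cones of \(\Sigma\), so each is still generated by part of a \(\Z\)-basis. For any \(\sigma,\tau\in\Sigma'\), the index \([N:N_\sigma+N_\tau]\) is literally the same number as computed in \(\Sigma\), hence is \(1\) or \(\infty\).

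For (2), I will use the standard description of the star fan. It lives in the quotient lattice \(N(\sigma)=N/N_\sigma\), and its cones are the images \(\bar\tau\) of the cones \(\tau\in\Sigma\) containing \(\sigma\).

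Unimodularity comes first. Since \(\tau\) is unimodular and \(\sigma\) is a face of \(\tau\), the rays of \(\tau\) extend to a \(\Z\)-basis of \(N\) with the rays of \(\sigma\) among them. The images of the remaining rays of \(\tau\) are then part of a \(\Z\)-basis of \(N/N_\sigma\). This also shows that \(N_{\bar\tau}=N_\tau/N_\sigma\).

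For the index condition, take \(\tau,\tau'\supseteq\sigma\). Then \(N_{\bar\tau}+N_{\bar\tau'}=(N_\tau+N_{\tau'})/N_\sigma\), using \(N_\sigma\subseteq N_\tau\). Hence
\[
[N/N_\sigma : N_{\bar\tau}+N_{\bar\tau'}]=[N:N_\tau+N_{\tau'}]\in\{1,\infty\},
\]
by the correspondence theorem for subgroups containing \(N_\sigma\). This step, identifying the lattice of the image cone with \(N_\tau/N_\sigma\), is the only point needing care: it relies on saturation of \(N_\sigma\) in \(N_\tau\), which is where unimodularity of \(\tau\) is used.

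For (3), the cones of \(\Sigma_1\times\Sigma_2\) in \(N_1\times N_2\) are \(\sigma_1\times\sigma_2\). Unimodularity follows by taking the union of the bases for the two factors. The lattice of a product cone splits as \(N_{\sigma_1\times\sigma_2}=N_{\sigma_1}\times N_{\sigma_2}\). Therefore
\[
[N_1\times N_2 : N_{\sigma_1\times\sigma_2}+N_{\tau_1\times\tau_2}]=[N_1:N_{\sigma_1}+N_{\tau_1}]\cdot[N_2:N_{\sigma_2}+N_{\tau_2}],
\]
and a product of two numbers in \(\{1,\infty\}\) again lies in \(\{1,\infty\}\).

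I expect no real obstacle here. The main point to state cleanly is the quotient identification in (2).
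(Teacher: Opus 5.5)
Your proposal is correct and follows the paper's proof essentially step for step: (1) is immediate, (2) identifies \(N_{\bar\tau}\) with \(N_\tau/N_\sigma\) in \(N/N_\sigma\) and applies the correspondence theorem to get equal indices, and (3) splits \(N_\sigma+N_\tau\) as a direct sum so the indices multiply. One small correction to your remark in (2): the property actually used is saturation of \(N_\sigma\) in \(N\), not in \(N_\tau\). This is what makes \(N/N_\sigma\) the correct lattice for the star fan and ensures the images of the remaining rays of \(\tau\) are primitive, and your basis-extension argument already provides it.
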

\begin{proof}
    \begin{enumerate}
        \item This is immediate from the definition: subfans of unimodular fans are unimodular, and any pair of cones in \(\Sigma'\) is also a pair of cones in \(\Sigma\).

        \item Fix \(\sigma\in\Sigma\). Since \(\Sigma\) is unimodular, \(N_\sigma\) is saturated in \(N\), and the quotient \(N/N_\sigma\) is a lattice. The star fan is unimodular because any cone \(\alpha\supseteq\sigma\) is generated by part of a basis of \(N\), and its image in \(N/N_\sigma\) is generated by the complementary basis vectors.

              It remains to check the index condition. Let \(\overline{\alpha},\overline{\beta}\in\Star_\Sigma(\sigma)\), represented by cones \(\alpha,\beta\in\Sigma\) containing \(\sigma\). In the quotient lattice \(N/N_\sigma\), we have
              \[
                  N_{\overline{\alpha}}=\frac{N_\alpha}{N_\sigma},\qquad N_{\overline{\beta}}=\frac{N_\beta}{N_\sigma}.
              \]
              Thus
              \[
                  \frac{N/N_\sigma}{N_{\overline{\alpha}}+N_{\overline{\beta}}}
                  \cong
                  \frac{N}{N_\alpha+N_\beta},
              \]
              and therefore
              \[
                  [N/N_\sigma:N_{\overline{\alpha}}+N_{\overline{\beta}}]
                  =
                  [N:N_\alpha+N_\beta].
              \]
              The right-hand side is \(1\) or \(\infty\) by strong unimodularity of \(\Sigma\), so \(\Star_\Sigma(\sigma)\) is strongly unimodular.

        \item Let \(\Sigma_i\subseteq (N_i)_\R\) (\(i=1,2\)) be strongly unimodular. The product fan is unimodular because a product of cones generated by parts of bases is generated by part of the product basis. It remains to check the index condition. Take cones
              \[
                  \sigma=\sigma_1\times \sigma_2,\qquad \tau=\tau_1\times \tau_2
              \]
              in \(\Sigma_1\times\Sigma_2\). Then
              \[
                  N_\sigma=N_{\sigma_1}\oplus N_{\sigma_2},\qquad
                  N_\tau=N_{\tau_1}\oplus N_{\tau_2},
              \]
              so
              \[
                  N_\sigma+N_\tau=(N_{\sigma_1}+N_{\tau_1})\oplus(N_{\sigma_2}+N_{\tau_2}).
              \]
              Hence
              \[
                  [N_1\oplus N_2:N_\sigma+N_\tau]
                  =
                  [N_1:N_{\sigma_1}+N_{\tau_1}]\cdot [N_2:N_{\sigma_2}+N_{\tau_2}],
              \]
              with the convention that if one factor is \(\infty\), then the product is \(\infty\). Since each factor is \(1\) or \(\infty\), the product is \(1\) or \(\infty\). Therefore \(\Sigma_1\times\Sigma_2\) is strongly unimodular.
    \end{enumerate}
\end{proof}

\begin{remark}\label{rmk:non_unimodular_example}
    The index condition alone does not imply unimodularity. Let \(N=\Z^4\), with standard basis \(e_1,e_2,e_3,e_4\), and set
    \[
        \sigma=\operatorname{cone}(u_1,u_2),\qquad
        u_1=e_1,\ \ u_2=e_1+2e_2.
    \]
    Let \(\Sigma\) be the fan consisting of \(\sigma\), its faces, and the two rays \(\R_{\ge0}e_3\) and \(\R_{\ge0}e_4\). For any pair \(\alpha,\beta\in\Sigma\), the vector space \(\operatorname{span}_\R(\alpha)+\operatorname{span}_\R(\beta)\) has dimension at most \(3\). Hence
    \[
        [N:N_\alpha+N_\beta]=\infty.
    \]
    Thus \(\Sigma\) satisfies the index condition.

    On the other hand, \(\sigma\) is not unimodular: writing
    \[
        L:=N\cap \operatorname{span}_\R(\sigma)=\Z e_1\oplus \Z e_2,
    \]
    we have
    \[
        N_\sigma=\Z u_1+\Z u_2=\{(a+b,2b,0,0):a,b\in\Z\},
    \]
    so \([L:N_\sigma]=2\). Thus \(\Sigma\) is not unimodular.
\end{remark}

\begin{remark}
    The index condition alone is preserved under coarsening: if \(\Sigma\) refines \(\Sigma'\), one chooses subcones of a pair of cones in \(\Sigma'\) with the same spans and applies the index condition in \(\Sigma\). This does not imply that coarsenings are strongly unimodular: a coarsening of a unimodular fan need not be simplicial, hence need not be unimodular.
\end{remark}

\begin{example}
    The fan of \(\PP^n\) is strongly unimodular. Let \(N=\Z^n\), let \(e_1,\dots,e_n\) be the standard basis, and set
    \[
        e_0=-\sum_{i=1}^n e_i,
    \]
    so that \(\Sigma(1)=\{e_0,e_1,\dots,e_n\}\). If \([N:N_\sigma+N_\tau]<\infty\), then \(N_\sigma+N_\tau\) has rank \(n\), so it contains \(n\) linearly independent rays from \(\Sigma(1)\). Any \(n\)-subset of \(\Sigma(1)\) is an integral basis of \(N\), hence \([N:N_\sigma+N_\tau]=1\). Therefore the fan of \(\PP^n\) is strongly unimodular.
\end{example}

\begin{example}\label{eg:braid_is_strongly_unimodular}
    The braid fan \(\Sigma_{[n]}\) is strongly unimodular. Since the braid fan is unimodular, it remains to show that \([N:N_\sigma+N_\tau]\in\{1,\infty\}\) for each pair of cones \((\sigma,\tau)\).

    If \(\dim(N_\sigma+N_\tau)<n-1\), then \([N:N_\sigma+N_\tau]=\infty\). Thus we may assume
    \[
        \dim(N_\sigma+N_\tau)=n-1.
    \]
    Let \(r=\dim\sigma\) and \(s=\dim\tau\). Then \(r+s\ge n-1\). We first reduce to the case \(r+s=n-1\). If \(r+s>n-1\), choose a basis \(B\) of \(N_\R\) from \(\sigma(1)\cup\tau(1)\), and define
    \[
        \sigma'=\operatorname{cone}(B\cap\sigma(1)),\qquad
        \tau'=\operatorname{cone}(B\setminus\sigma(1)).
    \]
    Since \(B\subseteq \sigma(1)\cup\tau(1)\), we have \(B\setminus\sigma(1)\subseteq \tau(1)\). Hence \(N_{\sigma'}+N_{\tau'}\subseteq N_\sigma+N_\tau\), and
    \[
        \dim\sigma'+\dim\tau'=|B|=n-1.
    \]
    Therefore, it is enough to prove \([N:N_{\sigma'}+N_{\tau'}]=1\), and we may assume \(r+s=n-1\).

    Suppose \(\sigma=\sigma_\pi\), where \(\pi=S_1|S_2|\cdots|S_{r+1}\) is an ordered partition of \([n]\). The ray generators of \(N_\sigma\) are (brackets denote the class in \(N=\Z^n/\Z(1,\ldots,1)\))
    \[
        [e_{S_1}], [e_{S_1\cup S_2}], \ldots, [e_{S_1\cup\cdots\cup S_{r}}],
    \]
    and similarly, if \(\tau=\tau_{\pi'}\) with \(\pi'=T_1|T_2|\cdots|T_{s+1}\), the ray generators of \(N_\tau\) are
    \[
        [e_{T_1}], [e_{T_1\cup T_2}], \ldots, [e_{T_1\cup\cdots\cup T_{s}}],
    \]
    It suffices to show that these vectors contain an integral basis of \(N=\Z^n/\Z(1,\ldots,1)\). Equivalently, after adding \(e_{[n]}\), they contain a basis of \(\Z^n\). Let \(M\) be the \(n\times n\) integer matrix whose rows are the above vectors, with the last row \(e_{[n]}\).

    We now apply unimodular row operations. Replacing \(R_i\) by \(R_i-R_{i-1}\) for \(i=r,r-1,\ldots,2\), the first \(r\) rows become
    \[
        e_{S_1},e_{S_2},\dots, e_{S_{r}}.
    \]
    Applying the same operation to the \(T\)-block gives \(e_{T_1},\dots,e_{T_s}\), and then replacing the last row by
    \[
        e_{[n]}-\sum_{j=1}^s e_{T_j}=e_{T_{s+1}},
    \]
    we obtain a new matrix \(M'\). Since \(\dim(N_\sigma+N_\tau)=n-1\), the matrix \(M\) has rank \(n\), hence \(M'\) is also nonsingular.

    Consider the bipartite graph \(G\) with left vertices \(S_1,\dots,S_{r+1}\), right vertices \(T_1,\dots,T_{s+1}\), and one edge for each \(k\in[n]\), joining the unique pair \((S_i,T_j)\) with \(k\in S_i\cap T_j\). The matrix \(M'\) is obtained by deleting the row indexed by \(S_{r+1}\) from the vertex-edge incidence matrix of \(G\), so \(M'\) is a minor of that incidence matrix. Incidence matrices of bipartite graphs are totally unimodular; see the appendix theorem in \cite{HellerTompkins1956}. Therefore \(M'\) is totally unimodular. Since \(M'\) is nonsingular, \(\det M'=\pm1\). Hence its rows form a \(\Z\)-basis of \(\Z^n\), which implies \([N:N_\sigma+N_\tau]=1\).
\end{example}

\section{\texorpdfstring{$K$}{K}-balancing conditions on strongly unimodular fans}

Throughout this section, we assume that $\Sigma$ is a complete strongly unimodular fan of dimension $n$. Recall that the $K$-ring of $X=X_\Sigma$ is spanned by $x_\sigma=[\mathcal{O}_{V(\sigma)}]$, and the balancing condition for Grothendieck weights is given by the linear relations between these elements.

We start with
\[
    R_m=\prod_{\langle m,u_\rho\rangle>0}(1-x_\rho)^{\langle m,u_\rho\rangle}
    -\prod_{\langle m,u_\rho\rangle<0}(1-x_\rho)^{-\langle m,u_\rho\rangle}.
\]

We claim that if we choose $m$ carefully, all exponents \(\langle m,u_\rho\rangle\in\{0,\pm1\}\).

\begin{lemma}\label{lem:pm1-dual-basis}
    Suppose \(\Sigma\) is a complete strongly unimodular fan of dimension \(n\), and define
    \[
        Q_\Sigma:=\{m\in M_\R:-1\leq \langle m,u_\rho\rangle\leq 1\text{ for all }\rho\in\Sigma(1)\}.
    \]
    Then:
    \begin{enumerate}
        \item \(Q_\Sigma\cap M\) contains an integral basis of \(M\).
        \item More generally, for every \(\tau\in\Sigma\), \(Q_\Sigma\cap M(\tau)\) contains an integral basis of \(M(\tau):=\tau^\perp\cap M\).
    \end{enumerate}
\end{lemma}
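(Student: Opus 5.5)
The plan is to produce the basis explicitly as the dual basis of a maximal cone, and then use strong unimodularity to bound the pairings.

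Since \(\Sigma\) is complete of dimension \(n\), every cone is contained in a maximal cone \(\sigma\) of dimension \(n\). Because \(\Sigma\) is unimodular, the ray generators \(u_1,\dots,u_n\) of \(\sigma\) form a \(\Z\)-basis of \(N\). Let \(m_1,\dots,m_n\in M\) be the dual basis, so \(\langle m_i,u_j\rangle=\delta_{ij}\). For part (1), I will show that each \(m_i\) lies in \(Q_\Sigma\).

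Fix an arbitrary ray \(\rho\in\Sigma(1)\) and write
\[
u_\rho=\sum_{j=1}^n c_j u_j, \qquad c_j=\langle m_j,u_\rho\rangle\in\Z.
\]
The claim is that each \(c_i\in\{0,\pm1\}\). Let \(\sigma_i\) be the facet of \(\sigma\) spanned by the \(u_j\) with \(j\neq i\); it is a cone of \(\Sigma\). Now compare the two cones \(\sigma_i\) and \(\rho\).
\begin{itemize}
\item The sublattice \(N_{\sigma_i}+N_\rho\) is generated by \(\{u_j\}_{j\ne i}\) together with \(u_\rho\).
\item Modulo \(\{u_j\}_{j\ne i}\), the vector \(u_\rho\) reduces to \(c_i u_i\).
\item Hence \(N/(N_{\sigma_i}+N_\rho)\cong \Z/c_i\Z\), so \([N:N_{\sigma_i}+N_\rho]=|c_i|\) when \(c_i\neq 0\), and the index is \(\infty\) when \(c_i=0\).
\end{itemize}
Strong unimodularity (\cref{def:strongly-unimodular}) forces this index to be \(1\) or \(\infty\). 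Therefore \(|c_i|\le 1\), i.e. \(|\langle m_i,u_\rho\rangle|\le1\). Since \(\rho\) was arbitrary, every \(m_i\) lies in \(Q_\Sigma\cap M\), which proves (1).

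This index computation is the only real step. The point to be careful about is that the definition of strong unimodularity applies to any two cones of \(\Sigma\), not only to pairs lying in a common cone. So it is legitimate to pair the facet \(\sigma_i\) with an arbitrary ray \(\rho\), even when \(\rho\) and \(\sigma_i\) do not lie in a common cone.

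For part (2), given \(\tau\in\Sigma\), I choose the maximal cone \(\sigma\) to contain \(\tau\). By unimodularity, after renumbering, \(\tau\) is spanned by \(u_1,\dots,u_k\). Then \(m_{k+1},\dots,m_n\) vanish on \(\tau\), so they lie in \(M(\tau)=\tau^\perp\cap M\). They form a \(\Z\)-basis of \(M(\tau)\): if \(m=\sum a_i m_i\) with \(a_i=\langle m,u_i\rangle\) lies in \(\tau^\perp\), then \(a_1=\dots=a_k=0\). By part (1) these vectors lie in \(Q_\Sigma\), which proves (2).
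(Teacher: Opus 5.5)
Your proposal is correct and follows the paper's proof essentially step for step: you take the dual basis of a maximal cone containing \(\tau\), pair each facet \(\sigma_i\) with an arbitrary ray \(\rho\) to force \(|\langle m_i,u_\rho\rangle|\le 1\) via the index condition, and then keep the dual vectors that vanish on \(\tau\). The only difference is ordering: the paper proves (2) directly and obtains (1) as the case \(\tau=\{0\}\), while you prove (1) for an arbitrary maximal cone and then apply it to one containing \(\tau\).
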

\begin{proof}
    We only prove (2), since (1) is the special case \(\tau=\{0\}\).
    Since \(\Sigma\) is strongly unimodular, it is unimodular by definition.
    Fix \(\tau\in\Sigma\), and write \(r=\dim\tau\). Choose a maximal cone \(\gamma\in\Sigma(n)\) containing \(\tau\). After reindexing rays, we may write
    \[
        \tau=\operatorname{cone}(u_1,\dots,u_r),\qquad
        \gamma=\operatorname{cone}(u_1,\dots,u_n),
    \]
    with \(u_1,\dots,u_n\) a \(\Z\)-basis of \(N\). Let \(v_1,\dots,v_n\) be the dual basis of \(M\), i.e.
    \[
        \langle v_i,u_j\rangle=\delta_{ij}, \qquad \forall i,j \in [n].
    \]

    Fix any ray \(\rho\in\Sigma(1)\), and write
    \[
        u_\rho=\sum_{i=1}^n a_i u_i,\qquad a_i=\langle v_i,u_\rho\rangle\in\Z
    \]
    for the primitive generator of \(\rho\). We claim that \(a_i\in\{0,\pm 1\}\).

    For each \(i\), let
    \[
        \sigma_i:=\operatorname{cone}(u_1,\dots,\widehat{u_i},\dots,u_n)\in\Sigma.
    \]
    Then \(N_{\sigma_i}=\bigoplus_{j\ne i}\Z u_j\), \(N_\rho=\Z u_\rho\), and strong unimodularity gives
    \[
        [N:N_{\sigma_i}+N_\rho]\in\{1,\infty\}.
    \]
    If \(a_i=0\), then \(u_\rho\in (N_{\sigma_i})_\R\), so the index is \(\infty\). If \(a_i\neq 0\), then \(N_{\sigma_i}+N_\rho\) has rank \(n\), and
    \[
        [N:N_{\sigma_i}+N_\rho]=|a_i|,
    \]
    since modulo \(N_{\sigma_i}\), the class of \(u_\rho\) equals \(a_i[u_i]\). Hence \(|a_i|=1\). Therefore \(a_i\in\{0,\pm1\}\) for all \(i\), i.e.
    \[
        \langle v_i,u_\rho\rangle\in\{0,\pm1\}\quad\text{for all }i,\rho.
    \]
    In particular, each \(v_i\in Q_\Sigma\). Now observe that \(v_{r+1},\dots,v_n\) annihilate \(N_\tau\), hence lie in \(M(\tau)\), and they form an integral basis of \(M(\tau)\). Therefore
    \[
        \{v_{r+1},\dots,v_n\}\subseteq Q_\Sigma\cap M(\tau),
    \]
    so \(Q_\Sigma\cap M(\tau)\) contains an integral basis of \(M(\tau)\).
\end{proof}

We omit $\Sigma$ and use $Q$ to denote the polytope $Q_\Sigma$ when the fan is clear from context.

For each \(q\in Q\cap M\), let
\[
    \mathbf{P}_q=\{\rho\in\Sigma(1):\langle q,u_\rho\rangle=1\},\qquad
    \mathbf{N}_q=\{\rho\in\Sigma(1):\langle q,u_\rho\rangle=-1\}.
\]
Since \(q\in Q\cap M\), we have \(\langle q,u_\rho\rangle\in\{-1,0,1\}\) for every ray \(\rho\). Thus \(R_q\) can be rewritten as
\[
    R_q=\prod_{\rho\in \mathbf{P}_q}(1-x_\rho)-\prod_{\rho\in \mathbf{N}_q}(1-x_\rho).
\]
Expanding the products, we get
\[
    R_q=\sum_{\substack{\sigma\in \Sigma\\\sigma(1)\subseteq \mathbf{P}_q}}(-1)^{\dim \sigma}x_\sigma -
    \sum_{\substack{\sigma\in \Sigma\\\sigma(1)\subseteq \mathbf{N}_q}}(-1)^{\dim \sigma}x_\sigma.
\]
This is a linear relation among the \(x_\sigma\)'s. To obtain all linear relations, fix a cone \(\tau\) such that \(q\) annihilates \(N_\tau\), i.e. \(\langle q,u_\rho\rangle=0\) for all \(\rho\in\tau(1)\). Then no ray of \(\tau\) lies in \(\mathbf{P}_q\cup \mathbf{N}_q\), so \(\tau(1)\cap\sigma(1)=\emptyset\) for every cone \(\sigma\) appearing in the sums above.

When two cones \(\tau,\sigma\) do not share a common ray, we have
\[x_\tau x_\sigma=\begin{cases}
    x_{\cone(\tau,\sigma)},& \text{ if }\cone(\tau,\sigma)\in \Sigma,\\
    0,& \text{ if }\cone(\tau,\sigma) \text{ is not a cone of } \Sigma.
\end{cases}\]
Here and throughout this paper, \(\cone(\sigma,\tau)\) denotes the minimal cone in $\Sigma$ that contains both \(\sigma,\tau\).

Therefore,
\[
    x_\tau R_q = \sum_{\substack{\sigma(1)\subseteq \mathbf{P}_q\\\cone(\sigma,\tau)\in\Sigma}}(-1)^{\dim \sigma}x_{\cone(\sigma,\tau)} -
    \sum_{\substack{\sigma(1)\subseteq \mathbf{N}_q\\\cone(\sigma,\tau)\in\Sigma}}(-1)^{\dim \sigma}x_{\cone(\sigma,\tau)},
\]
where both sums range over cones \(\sigma\in\Sigma\).

We may rewrite this linear relation in another way. Instead of summing over $\sigma\in\Sigma$ such that $\cone(\sigma,\tau)$ exists in \(\Sigma\), we let $\sigma$ range over all cones that contain $\tau$ and such that \(\sigma(1)\setminus \tau(1)\) is a subset of $\mathbf{P}_q$ or $\mathbf{N}_q$. The two sets are in one-to-one correspondence by $\alpha\to\cone(\alpha,\tau)$.

\begin{equation}\label{eq:linear-relation-among-x-sigma}
    (-1)^{\dim\tau}x_\tau R_q = \sum_{\substack{\sigma\supsetneq \tau\\\sigma(1)\setminus\tau(1)\subseteq \mathbf{P}_q}}(-1)^{\dim \sigma}x_{\sigma} -
    \sum_{\substack{\sigma\supsetneq \tau\\\sigma(1)\setminus\tau(1)\subseteq \mathbf{N}_q}}(-1)^{\dim \sigma}x_{\sigma}.
\end{equation}

If $q$ annihilates \(N_\tau\), we call $\tau$ a $q$-neutral cone, or \((q,\tau)\) an admissible pair. Each admissible pair \((q,\tau)\) gives a linear relation among the \(x_\sigma\)'s.
We claim that these relations generate all linear relations among the \(x_\sigma\)'s. 

\begin{theorem}\label{thm:k-balancing}
    Let \(\Sigma\) be a complete strongly unimodular fan, and let \(X=X_\Sigma\). A function \(g:\Sigma\to\Z\) is a Grothendieck weight if and only if for each pair \((q,\tau)\) with \(q\in Q\cap M\) and \(\tau\) a \(q\)-neutral cone, one has
    \[
        \sum_{\substack{\sigma\supsetneq \tau\\\sigma(1)\setminus\tau(1)\subseteq \mathbf{P}_q}}(-1)^{\dim \sigma}g(\sigma) =
        \sum_{\substack{\sigma\supsetneq \tau\\\sigma(1)\setminus\tau(1)\subseteq \mathbf{N}_q}}(-1)^{\dim \sigma}g(\sigma).
    \]
\end{theorem}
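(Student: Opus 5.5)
The \emph{only if} direction is immediate from \eqref{eq:linear-relation-among-x-sigma}. For an admissible pair \((q,\tau)\), the element \((-1)^{\dim\tau}x_\tau R_q\) vanishes in \(K(X)\), because \(R_q\in I_{\mathrm{lin}}\). Expanding it gives exactly the signed combination of the \(x_\sigma\) in the statement. So every Grothendieck weight satisfies the displayed identity.

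For the converse, let \(F=\Z^{\Sigma}\) be the free abelian group with basis \(\{e_\sigma\}\), and let \(\mathcal{R}\subseteq F\) be the subgroup spanned by the vectors \(r_{q,\tau}\) read off from \eqref{eq:linear-relation-among-x-sigma}. Set \(A=F/\mathcal{R}\). The map \(e_\sigma\mapsto x_\sigma\) gives a surjection \(\pi:A\to K(X)\), surjective because the \(x_\sigma\) span by \cref{thm:K_ring_toric}. A function \(g\) satisfying the identities is the same as a homomorphism \(A\to\Z\), and it is a Grothendieck weight iff it factors through \(\pi\). So it suffices to show that \(\pi\) is an isomorphism, i.e.\ that the relations \(r_{q,\tau}\) generate all linear relations among the \(x_\sigma\).

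My plan is to put a ring structure on \(A\) and show that \(A\) is a quotient of \(\Z[x_\rho]\) by an ideal containing \(I_{\mathrm{SR}}+I_{\mathrm{lin}}\). Then the inverse map \(K(X)\to A\), \(x_\rho\mapsto[e_\rho]\), exists, and \(\pi\) is an isomorphism.

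First I would define an operator \(\mu_\rho\) on \(F\) for each ray \(\rho\):
\begin{itemize}
\item if \(\rho\notin\sigma(1)\), set \(\mu_\rho(e_\sigma)=e_{\cone(\rho,\sigma)}\), or \(0\) if that cone does not exist;
\item if \(\rho\in\sigma(1)\), let \(\tau\) be the facet of \(\sigma\) opposite \(\rho\). Extend \(\sigma\) to a maximal cone and take the dual basis vector \(q=v_\rho\) from the proof of \cref{lem:pm1-dual-basis}, so that \(q\in Q\cap M\), \(q\perp\tau\) and \(\langle q,u_\rho\rangle=1\). The relation \(x_\tau R_q\) shows that \(x_\rho x_\sigma\) equals a specific combination of \(x_{\sigma'}\) with \(\sigma'\supsetneq\tau\), \(\sigma'\ne\sigma\). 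I define \(\mu_\rho(e_\sigma)\) by that combination, which amounts to solving \((1-x_\rho)\cdot(\text{rest})\) for the square term.
\end{itemize}

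The key steps, in order, are:
\begin{enumerate}
\item Show that \(\mu_\rho\) preserves \(\mathcal{R}\). The relations \(r_{q,\tau}\) are built from products of factors \((1-x_{\rho'})\), so \(\mu_\rho(r_{q,\tau})\) should be rewritable as a combination of \(r_{q,\tau'}\), with \(\tau'=\cone(\tau,\rho)\) or \(\tau'=\tau\).
\item Show that the induced operators on \(A\) commute and are independent of the choices made (the maximal cone and \(q\)). Two choices \(q,q'\) differ by an element of \(M(\sigma)\). Using part~(2) of \cref{lem:pm1-dual-basis}, that difference can be written through basis vectors lying in \(Q\), and the resulting discrepancy should be a combination of relations \(r_{q'',\sigma}\). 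Here the hypothesis that the fan is strongly unimodular, rather than merely unimodular, is what keeps all exponents in \(\{0,\pm1\}\), so every auxiliary relation is again of the form \(r_{q,\tau}\).
\item With these in hand, \(A\) is a \(\Z[x_\rho]\)-module generated by \([e_{\{0\}}]\), hence a cyclic quotient \(\Z[x_\rho]/J\). By construction \(J\supseteq I_{\mathrm{SR}}\) (from the zero case of \(\mu_\rho\)). Also \(J\) contains \(R_q\) for \(q\in Q\cap M\), and these \(q\) include a basis of \(M\). Finally I would check that \(R_m\) for general \(m\) lies in the ideal generated by \(I_{\mathrm{SR}}\) and the \(R_q\) with \(q\) in such a basis. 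This uses the multiplicativity of \(\prod(1-x_\rho)^{\langle m,u_\rho\rangle}\) in \(m\) in the localized ring, where \(1-x_\rho\) is a unit, so \(I_{\mathrm{lin}}\) is generated by basis elements.
\end{enumerate}

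I expect step~2 to be the main obstacle: proving that \(\mu_\rho\) is well defined on \(A\) and that the operators commute. If it becomes unwieldy, my fallback is a rank count. \(K(X)\) is free of rank \(|\Sigma(n)|\), so it would suffice to show that \(A\) is generated by \(|\Sigma(n)|\) classes. I would try to prove that by using the relations \(r_{v_i,\tau}\) along a shelling-type ordering of the maximal cones (the fan is complete), reducing every \([e_\sigma]\) to one distinguished class per maximal cone.
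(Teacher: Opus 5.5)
Your \emph{only if} direction is correct and matches the paper. Your reformulation of the converse as ``\(\pi\colon A\to K(X)\) is injective'' is also valid. It is in fact stronger than needed: since \(g\) is \(\Z\)-valued, it suffices that \(\ker\pi\) be torsion, i.e.\ that the \(r_{q,\tau}\) span all relations over \(\Q\).

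The gap is that the two steps you defer carry essentially all of the content, and no argument is given for either. Showing that the \(\mu_\rho\) preserve \(\mathcal{R}\), do not depend on the auxiliary choices, and commute amounts to showing that \(A\) is already a commutative ring satisfying the relations of \(K(X)\). After that the rest is formal.
\begin{itemize}
\item The mechanism you sketch for step~2 is circular. The \(K\)-theoretic relations are not additive in \(q\). \(R_{q+q'}\) is related to \(R_q\) and \(R_{q'}\) only through the multiplicativity of \(\prod_\rho(1-x_\rho)^{\langle m,u_\rho\rangle}\), and that requires a ring in which the \(1-x_\rho\) are units. This is exactly the structure on \(A\) you have not yet built.
\item Step~1 has the same problem. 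Take any cone in the support of \(r_{q,\tau}\) that contains \(\rho\) (for instance, all of them when \(\rho\in\tau(1)\)). Its term is rewritten by the self-intersection rule, each with its own facet and its own auxiliary \(q'\). The rule itself is only defined by a downward recursion, since solving for \(x_\rho x_\sigma\) produces further terms \(x_\rho x_{\sigma'}\) with \(\rho\in\sigma'\). Nothing explains why the total lies in \(\mathcal{R}\).
\item The fallback needs a shelling order on \(\Sigma(n)\). Bruggesser--Mani supplies one for polytopal fans, but the theorem concerns arbitrary complete strongly unimodular fans. Even with a shelling, the \(K\)-theoretic reduction along it would still have to be carried out.
\end{itemize}

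The missing idea is that no ring structure on \(A\) is needed: filter by cone dimension and compare with Minkowski weights.
\begin{itemize}
\item Fix \(\tau\) of dimension \(p-1\) and \(q\in Q\cap M(\tau)\). The entries of \(r_{q,\tau}\) on cones \(\sigma\supset\tau\) with \(\dim\sigma=p\) are \((-1)^p\langle q,u_{\sigma/\tau}\rangle\). This is the Fulton--Sturmfels codimension-one balancing relation along \(\tau\) in direction \(q\). All other nonzero entries of \(r_{q,\tau}\) sit on cones of dimension \(>p\).
\item Order rows by \(\dim\tau\) and columns by \(\dim\sigma\). The matrix of all \(r_{q,\tau}\) is then block upper-triangular, with the Minkowski balancing matrices as diagonal blocks.
\item By \cref{lem:pm1-dual-basis}, \(Q\cap M(\tau)\) contains a basis of \(M(\tau)\). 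So each diagonal block spans all relations among the \([V(\sigma)]\), \(\dim\sigma=p\), in \(A^p(X)_\Q\).
\item Hence the rank of the relation matrix is at least \(|\Sigma|-\dim A^*(X)_\Q=|\Sigma|-\dim K(X)_\Q\), using \(\gr K(X)_\Q\cong A^*(X)_\Q\). This number is the dimension of the kernel of \(\Q^\Sigma\to K(X)_\Q\).
\item Every \(r_{q,\tau}\) lies in that kernel, so they span it. Any \(\Z\)-valued \(g\) annihilating them therefore kills every integral relation, i.e.\ is a Grothendieck weight.
\end{itemize}
This is how the paper proves the theorem. It avoids both the well-definedness questions and any shelling hypothesis.
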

\begin{proof}
    Let $\Psi\colon\Q^\Sigma \to K(X)\otimes_\Z \Q$ be the linear map $(c_\sigma)_{\sigma\in\Sigma}\mapsto \sum_{\sigma}c_\sigma x_\sigma$. By definition, $g\colon\Sigma\to\Z$ is a Grothendieck weight if and only if $g$ annihilates every element of $\ker\Psi$.

    For each pair $(q,\tau)$ with $q\in Q\cap M$ and $\tau$ a $q$-neutral cone, define $b_{(q,\tau)}\in\Q^\Sigma$ by
    \[
        (b_{(q,\tau)})_\sigma=\begin{cases}
            (-1)^{\dim \sigma},   & \text{if } \tau\subsetneq\sigma,\ \sigma(1)\setminus \tau(1)\subseteq \mathbf{P}_q, \\
            (-1)^{\dim \sigma+1}, & \text{if } \tau\subsetneq\sigma,\ \sigma(1)\setminus \tau(1)\subseteq \mathbf{N}_q, \\
            0,                    & \text{otherwise}.
        \end{cases}
    \]
    Since $\tau$ is $q$-neutral, $q\in M(\tau)=\tau^\perp\cap M$, so no ray of $\tau$ lies in $\mathbf{P}_q\cup \mathbf{N}_q$. Note that the vector \(b_{(q,\tau)}\) is exactly the coefficients in \cref{eq:linear-relation-among-x-sigma}. Therefore $b_{(q,\tau)}\in\ker\Psi$, which proves the \emph{only if} direction: every Grothendieck weight $g$ satisfies $\langle g,b_{(q,\tau)}\rangle=0$ for all admissible pairs $(q,\tau)$.

    For the \emph{if} direction, let $M_{\mathrm{GW}}$ be the matrix with row vectors $b_{(q,\tau)}$, rows indexed by admissible pairs $(q,\tau)$ and columns by $\sigma\in\Sigma$. Since all rows lie in $\ker\Psi$,
    \[
        \rank(M_{\mathrm{GW}})\leq\dim \ker\Psi=|\Sigma|-\dim K(X)\otimes_\Z \Q.
    \]
    It suffices to establish the reverse inequality
    \[
        \rank(M_{\mathrm{GW}})\geq|\Sigma|-\dim K(X)\otimes_\Z \Q.
    \]

    The strategy is to compare $M_{\mathrm{GW}}$ with a Minkowski-weight balancing matrix, using \(\gr K(X)\otimes_\Z\Q\cong A^*(X)\otimes_\Z\Q\). Let
    \[
        \Psi'\colon\Q^\Sigma \to A^*(X)\otimes_\Z \Q,\qquad
        (c_\sigma)_{\sigma\in\Sigma}\longmapsto \sum_{\sigma\in\Sigma}c_\sigma [V(\sigma)].
    \]
    Here $A^k(X)$ denotes the Chow group of codimension-$k$ cycles on $X$, and $A^*(X)=\bigoplus_{k=0}^n A^k(X)$ is the graded Chow ring.

    For \(k=0,1,\dots,n\), write \(\Sigma^{(k)}:=\{\sigma\in\Sigma:\operatorname{codim}\sigma=k\}\).
    Fix \(k\in\{0,\dots,n-1\}\), \(\tau\in\Sigma^{(k+1)}\), and \(q\in Q\cap M(\tau)\). In the terminology of Fulton--Sturmfels \cite[Section~2]{FultonSturmfels1997}, a Minkowski $k$-weight \(c\colon\Sigma^{(k)}\to\Q\) satisfies the codimension-one balancing condition along $\tau$ in direction $m\in M(\tau)$:
    \[
        \sum_{\substack{\sigma\in\Sigma^{(k)}\\ \sigma\supset\tau}} c(\sigma)\langle m, u_{\sigma/\tau}\rangle = 0,
    \]
    where $u_{\sigma/\tau}$ denotes the primitive generator of the unique ray of $\sigma$ not in $\tau(1)$. Since $q\in M(\tau)\cap Q$, \cref{lem:pm1-dual-basis} gives $\langle q,u_{\sigma/\tau}\rangle\in\{-1,0,1\}$. Grouping terms by the sign of this pairing, the condition for $m=q$ becomes
    \[
        \sum_{\substack{\sigma\supset\tau\\ \sigma(1)\setminus\tau(1)\subseteq \mathbf{P}_q}} c(\sigma)
        -
        \sum_{\substack{\sigma\supset\tau\\ \sigma(1)\setminus\tau(1)\subseteq \mathbf{N}_q}} c(\sigma)=0.
    \]
    Here and in the next display, both sums range over cones \(\sigma\in\Sigma^{(k)}\).
    By \cref{lem:pm1-dual-basis}, $Q\cap M(\tau)$ contains an integral basis of $M(\tau)$; hence letting $q$ range over $Q\cap M(\tau)$ yields all codimension-one balancing conditions along $\tau$. Since \(\dim\sigma=n-k\) is constant on \(\Sigma^{(k)}\), multiplying through by $(-1)^{n-k}$ gives the equivalent form
    \[
        \sum_{\substack{\sigma\supset\tau\\ \sigma(1)\setminus\tau(1)\subseteq \mathbf{P}_q}} (-1)^{\dim\sigma} c(\sigma)
        +
        \sum_{\substack{\sigma\supset\tau\\ \sigma(1)\setminus\tau(1)\subseteq \mathbf{N}_q}} (-1)^{\dim\sigma+1} c(\sigma)=0.
    \]
    Define a row vector \(b^{\mathrm{MW}}_{(q,\tau)}\in\Q^{\Sigma^{(k)}}\) by 
    \[
        \bigl(b^{\mathrm{MW}}_{(q,\tau)}\bigr)_\sigma=
        \begin{cases}
            (-1)^{\dim\sigma},   & \text{if }\sigma\supset\tau,\ \sigma(1)\setminus\tau(1)\subseteq \mathbf{P}_q, \\
            (-1)^{\dim\sigma+1}, & \text{if }\sigma\supset\tau,\ \sigma(1)\setminus\tau(1)\subseteq \mathbf{N}_q, \\
            0,                   & \text{otherwise},
        \end{cases}
    \]
    where \(\sigma\) ranges over all \(\Sigma^{(k)}\). Note that this description is identical to the row vector \(b_{(q,\tau)}\), except \(\sigma\) only ranges over codimension \(k\) cones.
    Let \(M_{\mathrm{MW}}^{(k)}\) be the matrix with row vectors \(b^{\mathrm{MW}}_{(q,\tau)}\), rows indexed by \((q,\tau)\) with \(\tau\in\Sigma^{(k+1)}\), \(q\in Q\cap M(\tau)\), and columns indexed by \(\Sigma^{(k)}\).

    By the preceding paragraph and the Fulton--Sturmfels description of Minkowski weights \cite[Theorem~2.1]{FultonSturmfels1997}, the row span of \(M_{\mathrm{MW}}^{(k)}\) is the space of linear relations among the classes $[V(\sigma)]$, $\sigma\in\Sigma^{(k)}$, in $A^{n-k}(X)$. Therefore
    \[
        \rank\bigl(M_{\mathrm{MW}}^{(k)}\bigr) = \dim\ker\bigl(\Psi'|_{\Q^{\Sigma^{(k)}}}\bigr).
    \]

    Now form one large block diagonal matrix \(M_{\mathrm{MW}}\), using each \(M_{\mathrm{MW}}^{(k)}\) as a diagonal block. Since the row vectors of the two matrices \(M_{\mathrm{GW}}\) and \(M_{\mathrm{MW}}\) have the same description except for the range of \(\sigma\), we conclude that \(M_\mathrm{GW}\) is block upper-triangular with block entries \(M_\mathrm{MW}^{(k)}\).

    Since \([V(\sigma)]\in A^{n-k}(X)\) for each \(\sigma\in\Sigma^{(k)}\), the map \(\Psi'\) respects the codimension grading. The zero cone contributes no relation, so the preceding rank computation gives
    \[
        \rank(M_{\mathrm{MW}})
        =
        \sum_{k=0}^{n-1}\rank\!\bigl(M_{\mathrm{MW}}^{(k)}\bigr)
        =
        \dim\ker\Psi'
        =
        |\Sigma|-\dim A^*(X)\otimes_\Z\Q.
    \]

    Since \(M_{\mathrm{GW}}\) is block upper-triangular, and the corresponding block diagonal matrix is \(M_{\mathrm{MW}}\),
    \[
        \rank(M_{\mathrm{GW}})\ge \rank(M_{\mathrm{MW}})=|\Sigma|-\dim A^*(X)\otimes_\Z\Q.
    \]

    Since \(\gr K(X)\otimes_\Z\Q\cong A^*(X)\otimes_\Z\Q\) (see \cite[Example~15.2.16]{Fulton1998}), we have \(\dim K(X)\otimes_\Z\Q=\dim A^*(X)\otimes_\Z\Q\), so
    \[
        \rank(M_{\mathrm{GW}})\ge |\Sigma|-\dim K(X)\otimes_\Z\Q.
    \]
    Together with the reverse inequality, $\rank(M_{\mathrm{GW}})=\dim\ker\Psi$, so the vectors $\{b_{(q,\tau)}\}$ span $\ker\Psi$. Hence any function $g$ annihilating all $b_{(q,\tau)}$ is a Grothendieck weight, completing the proof.
\end{proof}

\begin{remark}\label{rmk:smaller-Q}
    The index set $Q\cap M$ in \cref{thm:k-balancing} can be replaced by any subset $S\subseteq Q\cap M$ such that for every $\tau\in\Sigma$, the set $S\cap M(\tau)$ contains an integral basis of $M(\tau)$. Indeed, the proof of \cref{thm:k-balancing} only uses this property of $Q\cap M$.
\end{remark}

\section{Product rule for Grothendieck weights}

In this section, we describe the ring structure of Grothendieck weights on a strongly unimodular fan. Throughout this section, we assume \(\Sigma\) is a complete strongly unimodular fan of dimension \(n\) in \(N_\R\), and \(X=X_\Sigma\) is the corresponding toric variety.

We first state the goal of this section. Given two Grothendieck weights \(g_1,g_2\), view them as elements of \(K(X)\). The product \(g_1\cdot g_2\) is completely determined by \(g_1\) and \(g_2\), and our goal is to find coefficients \(m_{\sigma,\tau}^\gamma\) such that
\begin{equation}\label{eq:product-coeff}
    (g_1\cdot g_2)(\gamma)=\sum_{\sigma,\tau}m_{\sigma,\tau}^\gamma\, g_1(\sigma)\,g_2(\tau).
\end{equation}

Following \cite{FultonSturmfels1997}, we first reduce the problem to the study of the diagonal of toric varieties.

\begin{proposition}
    Let \(\delta\colon X\to X\times X\) be the diagonal embedding. Any coefficients \(m_{\sigma,\tau}^\gamma\) in an expansion of \(\delta_*[\mathcal{O}_{V(\gamma)}]\) give product coefficients. More precisely, if
    \[
        \delta_*[\mathcal{O}_{V(\gamma)}]
        = \sum_{\sigma,\tau} m_{\sigma,\tau}^\gamma\,
        [\mathcal{O}_{V(\sigma\times\tau)}]
    \]
    in the ring \(K(X\times X)\), then \(m_{\sigma,\tau}^\gamma\) can be used in \cref{eq:product-coeff}.
\end{proposition}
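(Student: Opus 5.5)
We have to show that if \(\delta_*[\mathcal{O}_{V(\gamma)}]=\sum_{\sigma,\tau} m^\gamma_{\sigma,\tau}[\mathcal{O}_{V(\sigma\times\tau)}]\) in \(K(X\times X)\), then \((g_1\cdot g_2)(\gamma)=\sum m^\gamma_{\sigma,\tau}g_1(\sigma)g_2(\tau)\) for all Grothendieck weights \(g_1,g_2\). The plan is to evaluate the Euler characteristic of one and the same class on \(X\times X\) in two ways, using the projection formula for \(\delta\) and the Künneth formula for \(\chi\) on \(X\times X\).

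Write \(g_1=g_\alpha\) and \(g_2=g_\beta\) with \(\alpha,\beta\in K(X)\), using \cref{prop:K-bijection-GW}. Since \(\Phi\) is a ring isomorphism, \(g_1\cdot g_2=g_{\alpha\beta}\), so
\[
(g_1\cdot g_2)(\gamma)=\chi(\alpha\beta\cdot x_\gamma).
\]
Let \(p_1,p_2\colon X\times X\to X\) be the projections, and set \(\alpha\boxtimes\beta:=p_1^*\alpha\cdot p_2^*\beta\). Then \(\delta^*(\alpha\boxtimes\beta)=\alpha\beta\). Since \(\delta\) is a closed embedding, hence proper, and \(X\) is smooth, the projection formula in \(K\)-theory gives
\[
\delta_*\bigl(\alpha\beta\cdot x_\gamma\bigr)=(\alpha\boxtimes\beta)\cdot\delta_*x_\gamma .
\]
Pushforward along a proper map to a point preserves Euler characteristics, so \(\chi_X(\alpha\beta x_\gamma)=\chi_{X\times X}\bigl((\alpha\boxtimes\beta)\cdot\delta_*x_\gamma\bigr)\).

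Next we substitute the assumed expansion of \(\delta_*x_\gamma\). This requires
\[
[\mathcal{O}_{V(\sigma\times\tau)}]=[\mathcal{O}_{V(\sigma)\times V(\tau)}]=p_1^*x_\sigma\cdot p_2^*x_\tau .
\]
The first equality holds because the orbit closure of the cone \(\sigma\times\tau\) in the product fan is \(V(\sigma)\times V(\tau)\). The second holds because \(\mathcal{O}_{V(\sigma)}\boxtimes\mathcal{O}_{V(\tau)}=\mathcal{O}_{V(\sigma)\times V(\tau)}\), and the external tensor product of locally free resolutions resolves it, since the projections are flat. Substituting,
\[
\chi\bigl((\alpha\boxtimes\beta)\delta_*x_\gamma\bigr)=\sum_{\sigma,\tau}m^\gamma_{\sigma,\tau}\,\chi\bigl(p_1^*(\alpha x_\sigma)\cdot p_2^*(\beta x_\tau)\bigr).
\]

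Finally, the Künneth formula for coherent cohomology over \(\C\) gives \(\chi(\mathcal{E}\boxtimes\mathcal{F})=\chi(\mathcal{E})\chi(\mathcal{F})\). Extending bilinearly to \(K\)-classes, each summand equals \(\chi(\alpha x_\sigma)\chi(\beta x_\tau)=g_1(\sigma)g_2(\tau)\), which is the claimed formula. The only step needing real care is identifying \([\mathcal{O}_{V(\sigma\times\tau)}]\) with the exterior product, together with Künneth; both are standard.
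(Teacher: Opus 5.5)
Your proposal is correct and follows the paper's argument. You multiply the expansion by \(p_1^*\alpha\cdot p_2^*\beta\) and apply \(\chi\), using the projection formula for \(\delta\) on the left and the identity \([\mathcal{O}_{V(\sigma\times\tau)}]=p_1^*x_\sigma\cdot p_2^*x_\tau\) together with K\"unneth on the right; you justify that identity in more detail than the paper, which simply asserts it, though exactness of the tensored resolution really comes from Tor-independence of the two factors (K\"unneth over \(\C\)), not from flatness of the projections alone.
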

\begin{proof}
    Let
    \[
        g_1\boxtimes g_2 \coloneqq p_1^*g_1\cdot p_2^*g_2 \in K(X\times X),
    \]
    where \(p_1,p_2\colon X\times X\to X\) are the projections. Multiply both sides by \(g_1\boxtimes g_2\) and then apply \(\chi\).

    On the left-hand side, by the projection formula,
    \[
        \chi\bigl((g_1\boxtimes g_2)\cdot \delta_*[\mathcal{O}_{V(\gamma)}]\bigr)
        = \chi\bigl(\delta^*(g_1\boxtimes g_2)\cdot [\mathcal{O}_{V(\gamma)}]\bigr).
    \]
    By the definition of \(\delta^*\), we have \(g_1\cdot g_2=\delta^*(g_1\boxtimes g_2)\), so the left-hand side equals
    \[
        (g_1\cdot g_2)(\gamma)
        = \chi\bigl((g_1\boxtimes g_2)\cdot \delta_*[\mathcal{O}_{V(\gamma)}]\bigr).
    \]

    On the right-hand side, since \([\mathcal{O}_{V(\sigma\times\tau)}]=[\mathcal{O}_{V(\sigma)}]\boxtimes[\mathcal{O}_{V(\tau)}]\),
    \[
        \chi\bigl((g_1\boxtimes g_2)\,[\mathcal{O}_{V(\sigma\times\tau)}]\bigr)
        = \chi\bigl(g_1[\mathcal{O}_{V(\sigma)}]\bigr)\,
        \chi\bigl(g_2[\mathcal{O}_{V(\tau)}]\bigr)
        = g_1(\sigma)\,g_2(\tau).
    \]
    Comparing the two sides finishes the proof.
\end{proof}

Since the classes \([\mathcal{O}_{V(\sigma)}]\) are not linearly independent (the balancing condition furnishes the linear relations), such coefficients \(m_{\sigma,\tau}^\gamma\) are not unique. We describe one way to find a solution.

We first treat the case \(\gamma=\{0\}\). The general case reduces to this one by replacing \(\Sigma\) with \(\operatorname{Star}(\gamma)\). We need to express \(\delta_*[\mathcal{O}_{X}]=[\mathcal{O}_{\delta(X)}]\) as a linear combination of classes of \(T\times T\)-invariant subvarieties. The most natural way to do this is via one-parameter subgroup degeneration of \(\delta(X)\).

Following Fulton--Sturmfels \cite[Theorem~3.2]{FultonSturmfels1997}, we choose \(v\in N\) generic in the sense of the next lemma.

\begin{lemma}\label{lem:generic-v-choice}
    There exists a nonempty dense open subset \(U\subset N_\R\) such that for any \(v\in U\), the following hold:
    \begin{enumerate}
        \item For every \(\sigma,\tau\in\Sigma\), if \((\sigma+v)\cap\tau\neq\emptyset\), then \((\sigma^\circ+v)\cap(\tau^\circ)\neq\emptyset\).
        \item If \((\sigma+v)\cap\tau\neq\emptyset\), then \[\dim ((\sigma+v)\cap\tau)=\dim\sigma+\dim\tau-\dim N_\R.\]
    \end{enumerate}
\end{lemma}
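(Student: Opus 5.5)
The plan is to show that the set of ``bad'' vectors \(v\) is contained in a finite union of proper linear subspaces of \(N_\R\) (possibly also translates of them, in which case they are proper affine subspaces); the complement is then open and dense. Since \(\Sigma\) is finite, it suffices to treat one pair \((\sigma,\tau)\) at a time and to intersect the resulting finitely many open dense sets.

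Fix \(\sigma,\tau\in\Sigma\), and write \(L_\sigma,L_\tau\) for their linear spans. First suppose \(L_\sigma+L_\tau\neq N_\R\). Then \((\sigma+v)\cap\tau\neq\emptyset\) forces \(v\in\tau-\sigma\subseteq L_\sigma+L_\tau\), which is a proper subspace. So we exclude \(L_\sigma+L_\tau\). Outside it the hypotheses of (1) and (2) fail, so both statements hold vacuously.

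Now suppose \(L_\sigma+L_\tau=N_\R\). The set \(\{v:(\sigma+v)\cap\tau\neq\emptyset\}\) equals the polyhedral cone \(\tau-\sigma\), and likewise \(\{v:(\sigma^\circ+v)\cap\tau^\circ\neq\emptyset\}=\tau^\circ-\sigma^\circ\). The latter is the relative interior of \(\tau-\sigma\), because the relative interior of a Minkowski sum of convex sets is the sum of the relative interiors. Since \(\tau-\sigma\) is full-dimensional here, its relative interior is its interior. Hence the bad set for (1) is contained in the boundary of \(\tau-\sigma\). That boundary lies in the finite union of the hyperplanes spanned by the facets of this polyhedral cone, and we remove these hyperplanes.

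For (2), consider the map \(\phi:L_\sigma\times L_\tau\to N_\R\), \((a,b)\mapsto b-a\). It is surjective, so its fibres are affine spaces of dimension \(\dim\sigma+\dim\tau-\dim N_\R\). The set \((\sigma+v)\cap\tau\) is identified with \(\phi^{-1}(v)\cap(\sigma\times\tau)\). For \(v\) in the interior of \(\tau-\sigma=\phi(\sigma\times\tau)\), the fibre meets the relative interior of \(\sigma\times\tau\); this is exactly part (1), after excluding the boundary. A fibre of an affine map that meets the relative interior of a full-dimensional polyhedron (here \(\sigma\times\tau\) is full-dimensional in \(L_\sigma\times L_\tau\)) intersects it in a set of full fibre dimension. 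This gives the dimension formula in (2).

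The main obstacle I anticipate is this last step: one must verify carefully that \(\phi^{-1}(v)\) meets \(\sigma^\circ\times\tau^\circ\), which is exactly what (1) provides, and that a neighbourhood of that point inside the fibre stays in \(\sigma\times\tau\). Both follow from openness of \(\sigma^\circ\times\tau^\circ\) in \(L_\sigma\times L_\tau\). Finally, \(U\) is the complement of the finitely many proper subspaces and hyperplanes collected above. It is open, and it is dense because a finite union of proper affine subspaces has empty interior. Also, \(U\cap N\neq\emptyset\), so a lattice vector \(v\) can indeed be chosen.
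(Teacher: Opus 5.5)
Your proof is correct, and your part (2) is essentially the paper's argument. The paper observes that once the relative interiors meet, $\operatorname{aff}((\sigma+v)\cap\tau)=(L_\sigma+v)\cap L_\tau$; your fibre map $\phi$ makes the same point explicit.

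The real difference is in part (1) and in how $U$ is described. You remove, for each spanning pair $(\sigma,\tau)$, the facet hyperplanes of the full-dimensional cone $\tau-\sigma$, and then use $\operatorname{relint}(\tau-\sigma)=\tau^\circ-\sigma^\circ$. The paper removes only the proper subspaces $L_\sigma+L_\tau$ and shows that this already suffices. Suppose $v\in\partial(\tau-\sigma)$, and let $u$ be a supporting functional with $u\ge 0$ on $\tau-\sigma$ and $\langle u,v\rangle=0$. Then $v\in\tau'-\sigma'$, where $\sigma'=\sigma\cap u^\perp$ and $\tau'=\tau\cap u^\perp$ are faces. Hence $v\in L_{\sigma'}+L_{\tau'}\subseteq u^\perp$, which is a proper sum of spans of cones of $\Sigma$.

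The same argument shows that each of your facet hyperplanes $u^\perp$ equals some $L_{\sigma'}+L_{\tau'}$. So your $U$ in fact coincides with the paper's, but your write-up does not establish this. That intrinsic description, ``$v$ avoids every proper $L_\sigma+L_\tau$'', is exactly what the paper reuses in \cref{rmk:generic-permutohedron} to get a checkable genericity criterion for $\Sigma_{[n]}$ (e.g.\ $v_i=2^i$). Your route relies only on standard facts about Minkowski sums and boundaries of polyhedral cones and is fully adequate for the lemma as stated. The paper's route additionally yields that explicit condition.

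One small point on your last remark. It is true that $U\cap N\neq\emptyset$, but the reason is that every removed set is a linear subspace (facet hyperplanes of a cone pass through the origin). Hence $U$ is stable under positive scaling, and any rational point of $U$ can be scaled into $N$. Your hedge about affine translates is unnecessary, and if such sets were really present this scaling argument would fail.
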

For a polyhedron \(C\), we use \(C^\circ\) to denote its relative interior, and \(\partial C=C\setminus C^\circ\) to denote its boundary.
\begin{proof}
    Write \(L_\sigma=(N_\sigma)_\R\) for the linear span of \(\sigma\), and set
    \[
        U\coloneqq N_\R\setminus
        \bigcup_{\substack{\sigma,\tau\in\Sigma\\L_\sigma+L_\tau\neq N_\R}}
        (L_\sigma+L_\tau).
    \]
    This is the complement of a finite union of proper linear subspaces, hence is nonempty, dense, and open.

    Fix \(v\in U\) and suppose \((\sigma+v)\cap\tau\neq\emptyset\). Then
    \[
        v\in C\coloneqq\tau-\sigma\subseteq L_\sigma+L_\tau,
    \]
    so \(L_\sigma+L_\tau=N_\R\), and \(C\) is full-dimensional.

    We claim \(v\in C^\circ\). If \(v\in\partial C\), choose a supporting functional \(0\neq u\in M_\R\) that is nonnegative on \(C\) and satisfies \(\langle u,v\rangle=0\). Then \(\sigma'=\sigma\cap u^\perp\) and \(\tau'=\tau\cap u^\perp\) are faces of \(\sigma\) and \(\tau\), and
    \[
        v\in\tau'-\sigma'\subseteq L_{\sigma'}+L_{\tau'}\subseteq u^\perp\subsetneq N_\R,
    \]
    contradicting \(v\in U\). Thus \(v\in C^\circ=\tau^\circ-\sigma^\circ\), proving \((\sigma^\circ+v)\cap\tau^\circ\neq\emptyset\).

    Since the relative interiors meet,
    \[
        \operatorname{aff}\bigl((\sigma+v)\cap\tau\bigr)=(L_\sigma+v)\cap L_\tau.
    \]
    Consequently,
    \[
        \dim\bigl((\sigma+v)\cap\tau\bigr)=\dim(L_\sigma\cap L_\tau)=\dim\sigma+\dim\tau-\dim N_\R.
    \]
\end{proof}

We fix such an integral vector \(v\in U\cap N\).

Let \(\lambda_v\colon\mathbb{G}_m \to T\) be the one-parameter subgroup defined by \(v\).  Consider the family
\[
    \Gamma \subset \mathbb{G}_m\times X\times X,
    \qquad
    \Gamma = \bigl\{(t,\,\lambda_{-v}(t)\,p,\,p) \bigm| t\in \mathbb{G}_m,\; p\in X\bigr\}.
\]

Consider the scheme-theoretic closure of \(\Gamma\) in \(\mathbb{P}^1\times X\times X\), and let
\[
    \Gamma_t \coloneqq \overline{\Gamma}\cap (\{t\}\times X\times X),
    \qquad t\in \mathbb{P}^1.
\]
The intersection is scheme-theoretic. Clearly \(\Gamma_1=\delta(X)\); we set \(Z\coloneqq\Gamma_0\).

The family \(\overline{\Gamma}\) is flat over \(\mathbb{P}^1\). Indeed, since \(\Gamma\) is smooth and connected, \(\overline{\Gamma}\) is integral. The projection \(\overline{\Gamma}\to \mathbb{P}^1\) is dominant and \(\overline{\Gamma}\) is integral, so it is flat. A standard consequence of flatness is
\[
    [\mathcal{O}_{\Gamma_0}] = [\mathcal{O}_{\Gamma_1}],
\]
which means
\[
    [\mathcal{O}_{\delta(X)}] = [\mathcal{O}_Z].
\]

We next identify the special fiber scheme-theoretically using local toric initial degenerations.  The following elementary index comparison is used in the normalized-volume computation.

\begin{lemma}\label{lem:index-duality}
    Let \(\sigma,\tau\in \Sigma\). Assume
    \[
        \dim \sigma+\dim \tau=n
        \quad\text{and}\quad
        \rank(N_\sigma+N_\tau)=n.
    \]
    Then
    \[
        [M:\sigma^\perp+\tau^\perp]=[N:N_\sigma+N_\tau].
    \]
\end{lemma}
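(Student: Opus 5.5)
The plan is to identify the quotient \(M/(\sigma^\perp+\tau^\perp)\) with the torsion part of a dual lattice, using the standard duality between sublattices of \(N\) and orthogonal complements in \(M\). Write \(L=N_\sigma+N_\tau\). By hypothesis \(L\) has rank \(n\), so it is a finite-index sublattice of \(N\). Moreover, since \(\dim\sigma+\dim\tau=n=\rank L\), the sum \(N_\sigma\oplus N_\tau\to L\) is an isomorphism, i.e.\ \(N_\sigma\cap N_\tau=0\).

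First I would compute the dual of \(L\). Consider the restriction map \(r\colon M\to \Hom(L,\Z)\). It is injective because \(L\) has full rank. Since \(L\) has finite index, dualizing \(0\to L\to N\to N/L\to 0\) gives \(\operatorname{coker} r\cong \operatorname{Ext}^1(N/L,\Z)\). This is a finite group of order \([N:L]\). So \([\Hom(L,\Z):r(M)]=[N:L]\).

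Next I would identify \(\sigma^\perp+\tau^\perp\) inside this picture. Since \(L=N_\sigma\oplus N_\tau\), we have \(\Hom(L,\Z)=N_\sigma^\vee\oplus N_\tau^\vee\). An element \(m\in\sigma^\perp\) restricts to \((0,m|_{N_\tau})\), and an element of \(\tau^\perp\) restricts to \((m|_{N_\sigma},0\). The key step is to show that \(r(\sigma^\perp+\tau^\perp)=r(M)\cap\bigl(0\oplus N_\tau^\vee\bigr)+r(M)\cap\bigl(N_\sigma^\vee\oplus 0\bigr)\). Then I compare indices: \([M:\sigma^\perp+\tau^\perp]\) equals the index of this "split part" of \(r(M)\) in \(r(M)\).

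Unimodularity is used at this point. Each \(N_\sigma\) is saturated, and is spanned by part of a \(\Z\)-basis. Hence the restriction \(M\to N_\sigma^\vee\) is surjective with kernel \(\sigma^\perp\), and similarly for \(\tau\). Thus \(M/\sigma^\perp\cong N_\sigma^\vee\) and \(M/\tau^\perp\cong N_\tau^\vee\). It follows that \(M/(\sigma^\perp+\tau^\perp)\) is the cokernel-type quotient
\[
M/(\sigma^\perp+\tau^\perp)\cong \operatorname{coker}\bigl(\sigma^\perp\to M/\tau^\perp\bigr)=\operatorname{coker}\bigl(\sigma^\perp\to N_\tau^\vee\bigr).
\]
The composite \(M\to N_\sigma^\vee\oplus N_\tau^\vee=\Hom(L,\Z)\) has cokernel of order \([N:L]\), and its first component is surjective with kernel \(\sigma^\perp\). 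Applying the snake lemma to the projection onto \(N_\sigma^\vee\) then shows that \(\operatorname{coker}(\sigma^\perp\to N_\tau^\vee)\cong\operatorname{coker}(M\to N_\sigma^\vee\oplus N_\tau^\vee)\). This gives \([M:\sigma^\perp+\tau^\perp]=[N:N_\sigma+N_\tau]\).

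The main point to check carefully is this snake-lemma step. Concretely, I need that the map of short exact sequences
\[
(0\to\sigma^\perp\to M\to N_\sigma^\vee\to0)\;\longrightarrow\;(0\to N_\tau^\vee\to N_\sigma^\vee\oplus N_\tau^\vee\to N_\sigma^\vee\to0)
\]
has identity as its right vertical map. That identification is exactly where saturatedness of \(N_\sigma\) is needed. The rest is bookkeeping of finite indices. The first step computing \(\operatorname{coker} r\) is standard: choose a Smith normal form basis for \(L\subseteq N\).
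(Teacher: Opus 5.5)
Your proposal is correct and follows essentially the same route as the paper. Both arguments use saturation of \(N_\sigma\) and \(N_\tau\) to identify \(M/(\sigma^\perp+\tau^\perp)\) with the cokernel of the restriction map \(M\to N_\sigma^\vee\oplus N_\tau^\vee\), and then show this cokernel has order \([N:N_\sigma+N_\tau]\). The paper gets that order as \(|\det A^T|=|\det A|\) and realizes the isomorphism by the explicit map \((\bar m_1,\bar m_2)\mapsto\overline{m_1-m_2}\); you instead use \(\operatorname{Ext}^1(N/L,\Z)\) (or Smith normal form) and the snake lemma.
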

\begin{proof}
    Put \(a=\dim\sigma\), \(b=\dim\tau\), so \(a+b=n\). Choose \(\mathbb{Z}\)-bases \(u_1,\dots,u_a\) of \(N_\sigma\) and \(v_1,\dots,v_b\) of \(N_\tau\), and let \(A\in\mathrm{Mat}_{n\times n}(\mathbb{Z})\) be the matrix with these vectors as columns (in a fixed basis of \(N\)). Since \(\mathrm{im}(A)=N_\sigma+N_\tau\) has rank \(n\),
    \[
        [N:N_\sigma+N_\tau]=|\det A|.
    \]

    Since \(N_\sigma\) is a saturated sublattice of \(N\), it is a direct summand, so \(M\to N_\sigma^\vee\cong\mathbb{Z}^a\), \(m\mapsto m|_{N_\sigma}\), is surjective with kernel \(\sigma^\perp\); similarly for \(\tau\). In the given bases, the map
    \[
        \phi\colon M\longrightarrow M/\sigma^\perp\oplus M/\tau^\perp\cong\mathbb{Z}^a\oplus\mathbb{Z}^b,
        \qquad
        \phi(m)=\bigl(m+\sigma^\perp,\;m+\tau^\perp\bigr)
    \]
    has matrix \(A^T\). Its kernel is \(\sigma^\perp\cap\tau^\perp=(N_\sigma+N_\tau)^\perp=0\), so \(|\mathrm{coker}\,\phi|=|\det A^T|=|\det A|\). The surjection \((\bar m_1,\bar m_2)\mapsto\overline{m_1-m_2}\) from \(M/\sigma^\perp\oplus M/\tau^\perp\) to \(M/(\sigma^\perp+\tau^\perp)\) has kernel \(\mathrm{im}\,\phi\), so
    \[
        [M:\sigma^\perp+\tau^\perp]=|\mathrm{coker}\,\phi|=|\det A|=[N:N_\sigma+N_\tau].\qedhere
    \]
\end{proof}

\begin{lemma}\label{lem:diagonal-degeneration-reduced}
    Assume \(\Sigma\) is complete and strongly unimodular.  For the family
    \[
        \Gamma=\{(t,\lambda_{-v}(t)p,p):t\in\mathbb G_m,\ p\in X\},
    \]
    with \(v\) chosen as above, the special fiber \(Z=\Gamma_0\) is reduced.  More
    precisely,
    \[
        Z=
        \bigcup_{\substack{(\sigma+v)\cap\tau\neq\emptyset\\
                \dim\sigma+\dim\tau=n}}
        V(\sigma)\times V(\tau)
    \]
    scheme-theoretically.
\end{lemma}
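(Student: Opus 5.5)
The plan is to realize \(\overline{\Gamma}\) (over \(\mathbb{A}^1\subset\mathbb{P}^1\)) as a normal toric variety, and then read off the special fiber as the principal torus-invariant divisor \(\operatorname{div}(t)\). Reducedness will then come from two facts: every component of \(\operatorname{div}(t)\) has multiplicity one, by strong unimodularity, and normal toric varieties are Cohen--Macaulay.

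\textbf{Step 1: identify the ambient torus and fan.} \(\Gamma\) is the orbit closure in \(\mathbb{G}_m\times X\times X\) of the subtorus \(T'=\{(t,\lambda_{-v}(t)s,s)\}\subset \mathbb{G}_m\times T\times T\). Its cocharacter lattice is \(N'=\Z\times N\), embedded in \(\Z\oplus N\oplus N\) by
\[
(s,w)\mapsto (s,\,w-sv,\,w).
\]
For \(\sigma,\tau\in\Sigma\) set
\[
C_{\sigma,\tau}=\{(s,w)\in\R_{\ge0}\times N_\R : w-sv\in\sigma,\ w\in\tau\}.
\]
These are the preimages of the cones \(\R_{\ge0}\times\sigma\times\tau\) of the fan of \(\mathbb{A}^1\times X\times X\). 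Since \(\Sigma\) is complete, they form a fan \(\Sigma_v\) with support \(\R_{\ge0}\times N_\R\).

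\textbf{Step 2: compare \(X_{\Sigma_v}\) with \(\overline{\Gamma}\).} By the standard description of orbit closures of subtori, there is a toric morphism \(X_{\Sigma_v}\to \mathbb{A}^1\times X\times X\) whose image is \(\overline{\Gamma}\cap(\mathbb{A}^1\times X\times X)\). It is finite and birational onto its image, so it is the normalization of that image. I expect the main obstacle to be showing that this map is in fact a closed embedding, i.e.\ that \(\overline{\Gamma}\) is already normal. Chart by chart, this amounts to showing that the restriction map
\[
\Z_{\ge0}\oplus(\sigma^\vee\cap M)\oplus(\tau^\vee\cap M)\to C_{\sigma,\tau}^\vee\cap M'
\]
is surjective. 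I would attack this by using \cref{lem:pm1-dual-basis}, which controls the pairings of a dual basis against the ray generators, together with \cref{lem:index-duality}, which gives \([M:\sigma^\perp+\tau^\perp]=1\) in the relevant top-dimensional cases. If surjectivity proves hard in general, a fallback is available. The \(K\)-theoretic identity \([\mathcal{O}_{\Gamma_0}]=[\mathcal{O}_{\Gamma_1}]\) only needs a flat family, and pushing forward from \(X_{\Sigma_v}\) is compatible with it, so one could work on the normalization throughout. However, the statement asks for scheme-theoretic equality, so I will aim for normality.

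\textbf{Step 3: compute the special fiber.} The fiber over \(0\) is \(\operatorname{div}(t)\), where \(t=\chi^{(1,0)}\). Its components are the divisors \(D_\rho\) of the rays \(\rho\) of \(\Sigma_v\) lying in \(\{s>0\}\), each with multiplicity equal to the \(s\)-coordinate of the primitive generator \(u_\rho\). A ray meeting \(\{s=1\}\) at the point \(w\) is a minimal cone \(C_{\sigma,\tau}\) such that \(w-v\in\sigma^\circ\) and \(w\in\tau^\circ\). Since \(v\) is generic, \cref{lem:generic-v-choice} shows that this occurs exactly when \((\sigma+v)\cap\tau\neq\emptyset\) with \(\dim\sigma+\dim\tau=n\), and then \((\sigma+v)\cap\tau=\{w\}\) is a single point. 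The corresponding orbit closure maps isomorphically onto \(V(\sigma)\times V(\tau)\), which gives the set-theoretic decomposition in the statement.

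\textbf{Step 4: multiplicity one, then reducedness.} In the situation of Step 3, \(L_\sigma+L_\tau=N_\R\), so strong unimodularity gives \(N_\sigma+N_\tau=N\). Hence \(v=a-b\) with \(a\in N_\tau\) and \(b\in N_\sigma\), and this decomposition is unique because \(L_\sigma\cap L_\tau=0\). Therefore \(w=a\in N\), the primitive generator of \(\rho\) is \((1,w)\), and its multiplicity is \(1\). So \(\operatorname{div}(t)\) is generically reduced along every component. Finally, \(X_{\Sigma_v}\) is normal toric and hence Cohen--Macaulay, so the Cartier divisor \(\operatorname{div}(t)\) is Cohen--Macaulay and has no embedded components. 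A generically reduced scheme without embedded components is reduced, so \(Z\) equals the reduced union \(\bigcup V(\sigma)\times V(\tau)\) scheme-theoretically.
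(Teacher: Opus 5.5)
Your Steps 1, 3 and 4 are correct as far as they go. \(X_{\Sigma_v}\) is the normalization of \(Y=\overline{\Gamma}\cap(\mathbb{A}^1\times X\times X)\). The rays of \(\Sigma_v\) off \(\{s=0\}\) are the points \(\{w\}=(\sigma+v)\cap\tau\) with \(\dim\sigma+\dim\tau=n\). Strong unimodularity forces \(w\in N\), so \(\operatorname{div}(t)\) has multiplicity one along each such \(D_\rho\), and Cohen--Macaulayness excludes embedded components. But all of this shows only that the special fiber of the \emph{normalization} is reduced.

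The step that transfers this to \(Z\) is that \(X_{\Sigma_v}\to\mathbb{A}^1\times X\times X\) is a closed embedding. Equivalently, for maximal \(\sigma_0,\tau_0\), the map \(\Z_{\ge0}\oplus(\sigma_0^\vee\cap M)\oplus(\tau_0^\vee\cap M)\to C_{\sigma_0,\tau_0}^\vee\cap M'\) is surjective. You name this as ``the main obstacle'' and leave it unproved, and it is a genuine gap, not a technicality. Reducedness does not descend along a finite birational map. For example, \(Y'=\{a^2=t^2b\}\subset\mathbb{A}^1_t\times\mathbb{A}^2\) is flat over \(\mathbb{A}^1_t\) with normalization \((t,x)\mapsto(t,tx,x^2)\) from \(\mathbb{A}^2\). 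The normalization's fiber over \(t=0\) is reduced, while \(Y'_0=\{a^2=0\}\) is not. Conversely, since \(\Gamma\cong\mathbb{G}_m\times X\) is smooth, a reduced special fiber makes \(Y\) satisfy \(R_1\) and \(S_2\), hence normal. So, granting your Steps 3--4, the normality you postpone is equivalent to the lemma: you have reformulated the statement rather than proved it.

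What is missing is a unimodular covering of \(C^\vee=C_{\sigma_0,\tau_0}^\vee\) by cones spanned by the semigroup generators. These generators are \(e_t=(1,0)\), \(\hat a_\rho=(-\langle a_\rho,v\rangle,a_\rho)\) and \(\hat b_{\rho'}=(0,b_{\rho'})\), where \(\{a_\rho\}\) and \(\{b_{\rho'}\}\) are the dual bases for \(\sigma_0\) and \(\tau_0\).

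\begin{itemize}
\item If \((\sigma_0+v)\cap\tau_0=\emptyset\), then \(C\subset\{s=0\}\), the chart misses \(t=0\), and there is nothing to prove.
\item Otherwise \(C\) is full-dimensional. The facets of \(C^\vee\) not containing \(e_t\) are dual to the rays \((1,w)\) of \(C\), that is, to the vertices \(\{w\}=(\sigma+v)\cap\tau\). Such a facet is spanned by the \(n\) lifts of \(\{a_\rho\in\sigma^\perp\}\cup\{b_{\rho'}\in\tau^\perp\}\).
\item Slide any \((h,m)\in C^\vee\) in the \(-e_t\) direction; this is bounded because \(C\ni(1,w)\). It stops on one of these facets, so \(C^\vee\) is the union of the cones spanned by \(e_t\) and such \(n\) lifts.
\item By \cref{lem:index-duality} and strong unimodularity, \([M:\sigma^\perp+\tau^\perp]=[N:N_\sigma+N_\tau]=1\). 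Hence each such set of \(n\) vectors is a \(\Z\)-basis of \(M\), each of these cones is unimodular, and the semigroup is saturated.
\end{itemize}

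This is exactly the combinatorial input of the paper's proof. The paper identifies the special fiber on each chart with the \(\eta_v\)-initial ideal of the toric ideal of the diagonal. It then shows that the maximal cells of the associated regular subdivision are these same unimodular simplices, and concludes by Sturmfels' criterion that the initial ideal is square-free. That gives reducedness of \(Z\) directly, without passing through the normalization. With the covering argument added, your route becomes a valid alternative. It replaces the Gröbner-degeneration machinery with standard facts about normal toric varieties: the multiplicity of \(\operatorname{div}(t)\) and Cohen--Macaulayness.
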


\begin{proof}
    It is enough to work on maximal affine charts \(U_{\sigma_0}\times U_{\tau_0}\).
    Since \(\Sigma\) is complete and strongly unimodular, it is smooth.  Let
    \(\{a_\rho:\rho\in\sigma_0(1)\}\) and
    \(\{b_{\rho'}:\rho'\in\tau_0(1)\}\) be the dual bases of \(M\) determined by the
    ray bases of \(\sigma_0\) and \(\tau_0\).  On this chart the diagonal is defined by
    the toric ideal
    \[
        I_{\sigma_0,\tau_0}
        =
        \ker\Bigl(
        \mathbb C[x_\rho,y_{\rho'}]\longrightarrow\mathbb C[M],\quad
        x_\rho\mapsto\chi^{a_\rho},\
        y_{\rho'}\mapsto\chi^{b_{\rho'}}
        \Bigr).
    \]
    The translate by \(\lambda_{-v}\) gives
    \[
        x_\rho\mapsto t^{-\langle a_\rho,v\rangle}\chi^{a_\rho},
        \qquad
        y_{\rho'}\mapsto\chi^{b_{\rho'}}.
    \]
    Set
    \[
        \eta_v(x_\rho)=\eta_v(a_\rho)=\langle a_\rho,v\rangle,
        \qquad
        \eta_v(y_{\rho'})=\eta_v(b_{\rho'})=0 .
    \]
    The graph closure over this chart is the Groebner degeneration of
    \(I_{\sigma_0,\tau_0}\) with respect to \(\eta_v\); hence its special fiber is
    defined by the minimum-weight initial ideal
    \(\operatorname{in}_{\eta_v}(I_{\sigma_0,\tau_0})\)
    \cite[Chapter~1]{Sturmfels1996}.

    The special fiber uses the minimum \(\eta_v\)-initial ideal.  In Sturmfels'
    convention for toric initial ideals, this corresponds to using the height
    vector \(-\eta_v\) on the configuration
    \[
        \mathcal{A}=
        \{a_\rho:\rho\in\sigma_0(1)\}
        \cup
        \{b_{\rho'}:\rho'\in\tau_0(1)\}
        \subset M .
    \]
    The resulting regular subdivision is obtained from the lower faces of the
    lifted configuration
    \[
        \{(a_\rho,-\eta_v(a_\rho)):\rho\in\sigma_0(1)\}
        \cup
        \{(b_{\rho'},-\eta_v(b_{\rho'})):\rho'\in\tau_0(1)\}
        \subset M_\mathbb R\oplus\mathbb R
    \]
    \cite[Theorem~8.3 and Chapter~8]{Sturmfels1996}.  In this convention, a subset
    of the configuration \(\mathcal{A}\) is a cell exactly when there is some
    \(p\in N_\mathbb R\) such that
    \[
        m\longmapsto \langle m,p\rangle-\eta_v(m)
    \]
    is zero on that subset and strictly positive on all other vectors.  We now
    determine the cells. For a lower cell defined by \(p\), the values
    on the two parts of \(\mathcal{A}\) are
    \[
        \langle a_\rho,p-v\rangle,\qquad
        \langle b_{\rho'},p\rangle .
    \]
    Since \(a_\rho\) and \(b_{\rho'}\) are the dual bases, the nonnegativity of these
    numbers gives \(p-v\in\sigma_0\) and \(p\in\tau_0\).  Let \(\sigma\le\sigma_0\)
    and \(\tau\le\tau_0\) be the unique faces with
    \[
        p-v\in\sigma^\circ,\qquad p\in\tau^\circ .
    \]
    Then the zero set of the function above is exactly
    \[
        \{a_\rho\in \sigma^\perp\}
        \cup
        \{b_{\rho'}:\rho'\in\tau^\perp\}.
    \]
    Conversely, any \(p\) satisfying these two relative-interior conditions defines
    this cell.  Thus the cells in this chart are precisely the displayed subsets,
    with \((\sigma^\circ+v)\cap\tau^\circ\neq\emptyset\).  By
    \cref{lem:generic-v-choice}, this is the same as
    \((\sigma+v)\cap\tau\neq\emptyset\).

    We claim each maximal cell of the subdivision above is a unimodular simplex.
    Under the correspondence above, inclusion of cells is opposite to componentwise
    inclusion of the pairs \((\sigma,\tau)\).  Hence a cell is maximal exactly when
    \((\sigma,\tau)\) is minimal among the pairs with
    \((\sigma+v)\cap\tau\neq\emptyset\).  In the common refinement, these minimal
    pairs are exactly the vertices, i.e. those for which \((\sigma+v)\cap\tau\) is a
    point.  Under the genericity condition in \cref{lem:generic-v-choice}, every
    nonempty intersection satisfies
    \[
        \dim((\sigma+v)\cap\tau)=\dim\sigma+\dim\tau-n.
    \]
    Thus the maximal cells are exactly those with
    \[
        (\sigma+v)\cap\tau\neq\emptyset,
        \qquad
        \dim\sigma+\dim\tau=n.
    \]

    Note that the vectors \(a_\rho\in \sigma^\perp\) form a lattice basis of
    \(\sigma^\perp\), and similarly the corresponding \(b_{\rho'}\)'s form a
    lattice basis of \(\tau^\perp\). By the genericity of \(v\), \(\rank(N_\sigma+N_\tau)=n\), hence \(\sigma^\perp\cap\tau^\perp= \{0\}\).  Each such maximal cell contains
    \[
        (n-\dim\sigma)+(n-\dim\tau)=n
    \]
    vectors, and these vectors span \(M_\mathbb R\); hence it is a simplex cell. Therefore such a maximal cell has normalized volume
    \[
        [M:\sigma^\perp+\tau^\perp]
        =
        [N:N_\sigma+N_\tau]
        =
        1.
    \]
    Here the second equality is \cref{lem:index-duality}, and the last equality is
    strong unimodularity. Hence every maximal cell is a unimodular simplex, so the regular subdivision is in fact a unimodular triangulation. A toric initial ideal is
    square-free if and only if the corresponding regular triangulation is unimodular
    \cite[Corollary~8.9]{Sturmfels1996}.  Therefore
    \(\operatorname{in}_{\eta_v}(I_{\sigma_0,\tau_0})\) is square-free monomial.
    It follows that \(Z\cap(U_{\sigma_0}\times U_{\tau_0})\) is reduced on each maximal affine
    chart.  These charts cover \(X\times X\), so \(Z\) is reduced.  The same
    maximal cells give the displayed orbit-closure decomposition.
\end{proof}

Consequently, in the notation above,
\[
    C = \bigl\{(\sigma,\tau) \bigm|
    (\sigma+v)\cap \tau\neq \emptyset,\;
    \dim \sigma + \dim \tau = n\bigr\}.
\]

To compute the class \([\mathcal{O}_Z]\in K(X\times X)\), we use the following inclusion-exclusion complex.

\begin{lemma}
    Let \(X\) be a smooth toric variety with fan \(\Sigma\) and \(Z_i=V(\sigma_i)\) for some \(\sigma_i\in \Sigma\), \(i=1,2,\ldots,r\). Then the following C\v{e}ch complex for the closed cover \(\{Z_i\}\)
    \[
        0 \to \mathcal{O}_{\bigcup_i Z_i}
        \to \bigoplus_i \mathcal{O}_{Z_i}
        \to \bigoplus_{i<j} \mathcal{O}_{Z_i\cap Z_j}
        \to \cdots
        \to \mathcal{O}_{\bigcap_i Z_i}
        \to 0
    \]
    is an exact sequence.
\end{lemma}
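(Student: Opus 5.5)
The plan is to work locally and reduce to a statement about square-free monomial ideals. Exactness of a complex of coherent sheaves is local, so it suffices to check it on each affine chart \(U_{\gamma}\cong\mathbb{A}^n\) with \(\gamma\) a maximal cone. The rays of \(\gamma\) give coordinates \(x_\rho\), \(\rho\in\gamma(1)\). On this chart \(V(\sigma_i)\cap U_\gamma\) is empty unless \(\sigma_i\subseteq\gamma\). When \(\sigma_i\subseteq\gamma\), it is the coordinate subspace cut out by the prime ideal \(P_i=(x_\rho:\rho\in\sigma_i(1))\).

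Terms with \(V(\sigma_i)\cap U_\gamma=\emptyset\) vanish, and so do all intersections involving them. On the chart the complex is therefore the same kind of Čech complex, for the subfamily of indices with \(\sigma_i\subseteq\gamma\). Intersections are computed by sums of ideals: \(Z_{i_1}\cap\cdots\cap Z_{i_p}\) is cut out by \(P_{i_1}+\cdots+P_{i_p}\), which is again a coordinate prime ideal, namely that of \(V(\cone(\sigma_{i_1},\dots,\sigma_{i_p}))\) or empty. The union \(\bigcup Z_i\) is taken with its reduced structure, so its ideal is \(\bigcap P_i\).

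It remains to prove the algebraic statement. Let \(S=\C[x_1,\dots,x_n]\) and let \(P_1,\dots,P_r\) be ideals generated by subsets of the variables. The claim is that
\[
0\to S/\textstyle\bigcap_i P_i\to\bigoplus_i S/P_i\to\bigoplus_{i<j}S/(P_i+P_j)\to\cdots\to S/(P_1+\cdots+P_r)\to0
\]
is exact, with the usual alternating-sign Čech maps. All modules involved are \(\Z^n\)-graded. It therefore suffices to check exactness in each monomial degree \(x^a\), where \(a\in\Z_{\ge0}^n\).

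Fix \(a\) and let \(\operatorname{supp}(a)\) be its support. The monomial \(x^a\) is nonzero in \(S/(P_{i_1}+\cdots+P_{i_p})\) exactly when no variable of \(\operatorname{supp}(a)\) lies in any \(P_{i_j}\), that is, when every \(i_j\) lies in \(J_a=\{i: P_i\cap\operatorname{supp}(a)=\emptyset\}\). Each such graded piece is one-dimensional. In degree \(a\), the complex without its first term is therefore the augmented-free cochain complex of the full simplex on the vertex set \(J_a\).

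First suppose \(J_a\neq\emptyset\). Then that cochain complex is acyclic except for \(H^0\cong\C\), spanned by the all-ones vector. The first term \((S/\bigcap P_i)_a\) is \(\C\) precisely when \(x^a\notin\bigcap P_i\), which is equivalent to \(J_a\neq\emptyset\). The first map is the diagonal inclusion, so it identifies \((S/\bigcap P_i)_a\) with that \(H^0\), and the complex is exact in degree \(a\).

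Now suppose \(J_a=\emptyset\). Then every term vanishes in degree \(a\), including \((S/\bigcap P_i)_a\), because \(x^a\in\bigcap P_i\). This is where the square-free, monomial nature of the ideals is used: \(\bigcap P_i\) is spanned by the monomials it contains.

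The main point needing care is the identification \(\bigcap P_i=\{\text{monomials lying in every }P_i\}\), and that intersections and sums of these ideals behave combinatorially. Both hold because all the ideals are monomial ideals. The rest is the contractibility of a simplex.
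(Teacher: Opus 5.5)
Your argument is correct, but it takes a different route from the paper. The paper argues by induction on \(r\). It uses the two-term Mayer--Vietoris sequence \(0\to\mathcal{O}_{A\cup B}\to\mathcal{O}_A\oplus\mathcal{O}_B\to\mathcal{O}_{A\cap B}\to0\), which is exact for any two closed subschemes. It combines this with the distributive law \(I_3+(I_1\cap I_2)=(I_3+I_1)\cap(I_3+I_2)\) for monomial ideals. Together these give \(Z_r\cap\bigcup_{i<r}Z_i=\bigcup_{i<r}(Z_r\cap Z_i)\), so the inductive hypothesis applies to both \(\{Z_i\}_{i<r}\) and \(\{Z_r\cap Z_i\}_{i<r}\).

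You instead use the fine \(\Z^n\)-grading directly. Every module in the complex has graded pieces of dimension at most one. So each multidegree strand is the cochain complex of the full simplex on \(J_a\), with \((S/\bigcap P_i)_a\) mapping in diagonally, and exactness reduces to the contractibility of a simplex.

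What each approach buys:
\begin{itemize}
\item Your version is self-contained. It avoids the mapping-cone bookkeeping that the paper's ``follows by induction'' leaves implicit.
\item Your version also shows that only monomiality is used, not square-freeness. The same proof works verbatim for arbitrary monomial ideals, since sums and intersections of monomial ideals are computed on monomials.
\item The paper's version isolates the conceptual reason: distributivity of the ideals involved. That argument applies to any family of closed subschemes whose ideals satisfy the needed distributive identities, beyond the monomial setting.
\end{itemize}

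One small correction: if \(\Sigma\) is not complete, the charts \(U_\gamma\) over maximal cones are \(\mathbb{A}^k\times(\C^*)^{n-k}\), not \(\mathbb{A}^n\). Your argument still goes through unchanged using the \(M\)-grading on \(\C[\gamma^\vee\cap M]\), or by flat base change from \(\mathbb{A}^k\). In the paper's application to \(X\times X\) the fan is complete anyway.
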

\begin{proof}
    The proof relies on two facts. First, the \emph{Mayer--Vietoris} exact sequence for two closed subschemes \(A,B\) of a scheme \(X\):
    \[
        0 \to \mathcal{O}_{A\cup B}
        \to \mathcal{O}_A \oplus \mathcal{O}_B
        \to \mathcal{O}_{A\cap B}
        \to 0.
    \]
    Second, for three \(Z_i=V(\sigma_i)\), we have
    \[
        Z_3 \cap (Z_1 \cup Z_2)
        = (Z_3\cap Z_1) \cup (Z_3\cap Z_2)
    \]
    scheme-theoretically. This holds because in each affine chart of \(X\), the ideals of the \(Z_i\) are generated by monomials. Monomial ideals satisfy the distributive law (see, e.g., \cite[Lemma~7.3.2]{MooreRogersSatherWagstaff2018}):
    \[
        I_3 + (I_1 \cap I_2) = (I_3 + I_1) \cap (I_3 + I_2).
    \]

    Using these two facts, the lemma follows by induction on~\(r\).
\end{proof}
\begin{remark}
    The above complex is not necessarily exact for general \(X\) and \(Z_i\) if \(r>2\).
\end{remark}

Let \(Z_i=V(\sigma_i)\times V(\tau_i)\) be the components of \(Z\), with \((\sigma_i,\tau_i)\in C\). Applying the lemma to \(Z=\bigcup_i Z_i\) gives
\[
    [\mathcal{O}_Z]
    = \sum_{\emptyset\neq S\subseteq C} (-1)^{|S|+1}\,
    [\mathcal{O}_{\bigcap_{i\in S} Z_i}].
\]

In smooth toric varieties, scheme-theoretic intersections of torus-invariant subvarieties are again torus-invariant:
\[
    V(\sigma)\cap V(\tau) = V(\operatorname{cone}(\sigma,\tau))
\]
if \(\operatorname{cone}(\sigma,\tau)=\sigma+\tau\in \Sigma\), and empty otherwise. The operation \(\operatorname{cone}(\cdot,\cdot)\) is the \emph{join} of two cones in the poset of cones of~\(\Sigma\). We reinterpret the above summation as follows.

Let
\[
    P_v = \bigl\{(\sigma,\tau) \bigm| (\sigma+v)\cap \tau\neq \emptyset\bigr\}
    \cup\{\hat{0},\hat{1}\},
\]
with the partial order defined by \((\sigma,\tau) \leq (\sigma',\tau')\) if \(\sigma\subseteq\sigma'\) and \(\tau\subseteq\tau'\), together with \(\hat{0} \leq (\sigma,\tau) \leq \hat{1}\) for all \((\sigma,\tau)\). We first show that
\begin{lemma}
    The poset \(P_v\) is a lattice. More precisely, for any two elements \((\sigma,\tau),(\sigma',\tau')\in P_v\), their meet and join are given by
    \begin{align*}
        (\sigma,\tau) \wedge (\sigma',\tau')
         & = \begin{cases}
                 (\sigma\cap \sigma', \tau\cap \tau'),
                  & \text{if } (\sigma\cap \sigma',\tau\cap \tau')\in P_v, \\
                 \hat{0},
                  & \text{otherwise},
             \end{cases}   \\[6pt]
        (\sigma,\tau) \vee (\sigma',\tau')
         & = \begin{cases}
                 \bigl(\operatorname{cone}(\sigma,\sigma'),\,
                 \operatorname{cone}(\tau,\tau')\bigr),
                  & \text{if } \operatorname{cone}(\sigma,\sigma')\in \Sigma
                 \text{ and } \operatorname{cone}(\tau,\tau')\in \Sigma,     \\
                 \hat{1},
                  & \text{otherwise}.
             \end{cases}
    \end{align*}
\end{lemma}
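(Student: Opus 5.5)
We need to show P_v is a lattice with the stated meet and join. The plan is to show two things: the set of honest elements of \(P_v\) is closed downward under taking face pairs only in a restricted sense, and it is closed upward. Then the formulas follow from the lattice structure of the face poset of \(\Sigma\).

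\emph{Step 1 (upward closure).} If \((\sigma,\tau)\in P_v\) and \(\sigma\subseteq\sigma'\), \(\tau\subseteq\tau'\) with \(\sigma',\tau'\in\Sigma\), then \((\sigma'+v)\cap\tau'\supseteq(\sigma+v)\cap\tau\neq\emptyset\). Hence \((\sigma',\tau')\in P_v\). For the join, take \((\sigma,\tau),(\sigma',\tau')\in P_v\). Any common upper bound \((\alpha,\beta)\) in \(P_v\setminus\{\hat 1\}\) has \(\alpha\supseteq\sigma,\sigma'\) and \(\beta\supseteq\tau,\tau'\). Such an \(\alpha\) exists iff \(\operatorname{cone}(\sigma,\sigma')\in\Sigma\), and then it is the least such cone, since in a simplicial fan the cones containing two given cones contain \(\sigma+\sigma'\) when that is a cone. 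The same holds on the \(\tau\) side. If both joins exist, the pair \((\operatorname{cone}(\sigma,\sigma'),\operatorname{cone}(\tau,\tau'))\) lies in \(P_v\) by upward closure and is the least upper bound. Otherwise the only upper bound is \(\hat 1\).

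\emph{Step 2 (meet).} Any lower bound \((\alpha,\beta)\neq\hat 0\) satisfies \(\alpha\subseteq\sigma\cap\sigma'\) and \(\beta\subseteq\tau\cap\tau'\). Here \(\sigma\cap\sigma'\) and \(\tau\cap\tau'\) are cones of \(\Sigma\), namely common faces. So if \((\sigma\cap\sigma',\tau\cap\tau')\in P_v\), it is the greatest lower bound. If it is not in \(P_v\), then every lower bound \((\alpha,\beta)\) would satisfy \((\alpha+v)\cap\beta\subseteq(\sigma\cap\sigma'+v)\cap(\tau\cap\tau')=\emptyset\), so no honest lower bound exists. The meet is then \(\hat 0\).

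\emph{Step 3 (conclusion).} With \(\hat0,\hat1\) adjoined, every pair of elements has a meet and a join; pairs involving \(\hat0\) or \(\hat1\) are trivial. Hence \(P_v\) is a lattice.

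The only point needing care is that minimal upper bounds are unique, that is, that \(\operatorname{cone}(\sigma,\sigma')\) really is the least cone containing both. This follows because \(\Sigma\) is unimodular and hence simplicial: a cone containing \(\sigma\) and \(\sigma'\) contains all their rays, and if \(\sigma+\sigma'\) is a cone it is the face spanned by those rays. I expect nothing harder; the genericity of \(v\) is not even needed here.
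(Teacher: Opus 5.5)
Your proposal is correct and follows essentially the same route as the paper. The meet is read off from the face structure of \(\Sigma\) together with monotonicity of \((\sigma+v)\cap\tau\), and the join reduces to the upward-closure inclusion \((\sigma+v)\cap\tau\subseteq(\cone(\sigma,\sigma')+v)\cap\cone(\tau,\tau')\); genericity of \(v\) is unused in both your argument and the paper's (and uniqueness of a least cone containing \(\sigma,\sigma'\) already follows from the fan being closed under intersections, so simpliciality is only needed to identify it with \(\sigma+\sigma'\)).
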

\begin{proof}
    We verify that the two operations are indeed the meet and join in \(P_v\). The meet is obvious. For the join: if \(\operatorname{cone}(\sigma,\sigma')\notin \Sigma\) or \(\operatorname{cone}(\tau,\tau')\notin \Sigma\), then there is no upper bound of \((\sigma,\tau)\) and \((\sigma',\tau')\) in \(P_v\) other than \(\hat{1}\). If both cones lie in \(\Sigma\), it suffices to show that \(\bigl(\operatorname{cone}(\sigma,\sigma'),\, \operatorname{cone}(\tau,\tau')\bigr)\in P_v\), i.e.\ that \(\bigl(\operatorname{cone}(\sigma,\sigma')+v\bigr)\cap \operatorname{cone}(\tau,\tau')\neq \emptyset\). This is clear, since
    \[
        (\sigma+v)\cap \tau
        \subset \bigl(\operatorname{cone}(\sigma,\sigma')+v\bigr)
        \cap \operatorname{cone}(\tau,\tau'),
    \]
    and the left-hand side is nonempty.
\end{proof}

In particular, every \((\sigma_i,\tau_i)\) belongs to \(P_v\), as do all their joins. Let \(L_v\) be the sublattice of \(P_v\) generated by all \((\sigma_i,\tau_i)\in C\). The set \(C\) consists of the atoms of \(L_v\); in particular, \(L_v\) is an atomic lattice. The inclusion-exclusion formula can be rewritten as
\[
    [\mathcal{O}_Z]
    = \sum_{(\sigma,\tau)\in L_v}
    -\mu_{L_v}\!\bigl(\hat{0},(\sigma,\tau)\bigr)\,
    [\mathcal{O}_{V(\sigma\times\tau)}].
\]
This is the classical crosscut theorem in poset theory \cite[Theorem~3]{Rota1964}.

To compute the Möbius function of \(L_v\), we show that \(P_v\) is isomorphic to the face poset of the polyhedral complex given by the common refinement of \(\Sigma\) and \(\Sigma+v\). This uses the genericity of \(v\).

\begin{lemma}
    Let \(\mathcal{C}=\mathcal{C}(\Sigma,v)\) be the polyhedral complex given by the common refinement of \(\Sigma\) and \(\Sigma+v\). The poset \(P_v\) is isomorphic to the face lattice \(F(\mathcal{C})\) of~\(\mathcal{C}\).
\end{lemma}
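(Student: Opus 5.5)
The plan is to write down the explicit map \(\Phi\colon P_v\setminus\{\hat0,\hat1\}\to F(\mathcal{C})\setminus\{\emptyset\}\) given by
\[
    (\sigma,\tau)\longmapsto (\sigma+v)\cap\tau,
\]
and to extend it by \(\hat0\mapsto\emptyset\) and \(\hat1\mapsto\hat1\), where the top element is adjoined to the face lattice in the usual way. The cells of the common refinement \(\mathcal{C}\) are exactly the nonempty sets \((\sigma+v)\cap\tau\) with \(\sigma,\tau\in\Sigma\), and each is a polyhedron. So \(\Phi\) is well defined and surjective onto the nonempty cells. It is clearly order-preserving: if \(\sigma\subseteq\sigma'\) and \(\tau\subseteq\tau'\), then \((\sigma+v)\cap\tau\subseteq(\sigma'+v)\cap\tau'\), and a cell of a polyhedral complex contained in another cell is a face of it.

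The main step is injectivity together with order-reflection, and this is where the genericity from \cref{lem:generic-v-choice}(1) enters. Given a nonempty cell \(P=(\sigma+v)\cap\tau\), we have \((\sigma^\circ+v)\cap\tau^\circ\neq\emptyset\). Hence for any point \(p\in P^\circ\), the cone \(\sigma\) is the unique cone of \(\Sigma\) whose relative interior contains \(p-v\), and \(\tau\) is the unique cone whose relative interior contains \(p\). I would verify this by noting that \(P^\circ\) is contained in \((\sigma^\circ+v)\cap\tau^\circ\) when the latter is nonempty: the relative interior of an intersection of two convex sets whose relative interiors meet is the intersection of the relative interiors.

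It follows that the pair \((\sigma,\tau)\) is recovered from \(P\), so \(\Phi\) is injective. For order-reflection, suppose \(P=\Phi(\sigma,\tau)\) is a face of \(P'=\Phi(\sigma',\tau')\). Take \(p\in P^\circ\subseteq P'\). Then \(p-v\in\sigma'\) lies in the relative interior of a unique face of \(\sigma'\), and since \(p-v\in\sigma^\circ\) and \(\Sigma\) is a fan, that face must be \(\sigma\). So \(\sigma\subseteq\sigma'\), and likewise \(\tau\subseteq\tau'\). Thus \(\Phi\) is a poset isomorphism between the proper parts, and adjoining \(\hat0\leftrightarrow\emptyset\) and \(\hat1\) gives the isomorphism \(P_v\cong F(\mathcal{C})\).

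The main obstacle I expect is the careful verification that relative interiors behave as claimed. Specifically, I need \(P^\circ=(\sigma^\circ+v)\cap\tau^\circ\) for the minimal representation, and I need to rule out a single cell arising from two different pairs in \(P_v\). The second point is exactly where non-generic \(v\) would fail, since a cell could then lie on the boundary of \(\sigma+v\). I would handle it using \cref{lem:generic-v-choice}(1) as above, applied to every pair in \(P_v\), and by invoking the standard fact (Rockafellar) that \(\operatorname{ri}(A\cap B)=\operatorname{ri}A\cap\operatorname{ri}B\) when \(\operatorname{ri}A\cap\operatorname{ri}B\neq\emptyset\).
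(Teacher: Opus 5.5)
Your proposal is correct and follows essentially the same route as the paper. The paper uses the same map \((\sigma,\tau)\mapsto(\sigma+v)\cap\tau\) and, instead of proving injectivity and order-reflection directly, writes down an explicit inverse: a relative-interior point \(x\) of a cell determines the minimal cones containing \(x-v\) and \(x\). That inverse rests on the same identity \(\relint\bigl((\sigma+v)\cap\tau\bigr)=(\sigma^\circ+v)\cap\tau^\circ\), obtained from \cref{lem:generic-v-choice}(1), that your injectivity and order-reflection argument uses.
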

\begin{proof}
    Recall that the face lattice of a polyhedral complex is the set of all faces of all polyhedra, together with a minimum element \(\emptyset\) and a maximum element \(\hat{1}\), ordered by inclusion. We construct mutually inverse poset isomorphisms \(\Phi\colon P_v\to F(\mathcal{C})\) and \(\Psi\colon F(\mathcal{C})\to P_v\).

    For any \((\sigma,\tau)\in P_v\), define
    \[
        \Phi\bigl((\sigma,\tau)\bigr) = (\sigma+v)\cap \tau.
    \]
    Set \(\Phi(\hat{0})=\emptyset\) and \(\Phi(\hat{1})=\hat{1}\). Conversely, for any face \(F\in F(\mathcal{C})\), choose any point \(x\) in its interior. Let \(\sigma(F)\in \Sigma\) and \(\tau(F)\in \Sigma\) be the minimal cones such that \(x\in \sigma(F)+v\) and \(x\in \tau(F)\). Define
    \[
        \Psi(F) = \bigl(\sigma(F),\,\tau(F)\bigr).
    \]

    We verify:
    \begin{enumerate}
        \item \emph{\(\Phi\) and \(\Psi\) are well-defined.}
              For \(\Phi\), this is clear. For \(\Psi\), the choice of \(x\) does not matter because \(\relint(F)=(\sigma^\circ+v)\cap \tau^\circ\) for some \(\sigma,\tau\in \Sigma\); any interior point of \(F\) determines the same minimal cones.

        \item \emph{\(\Phi\circ \Psi =\mathrm{Id}_{F(\mathcal{C})}\).}
              For any face \(F\in F(\mathcal{C})\), we need
              \[
                  \Phi(\Psi(F)) = \bigl(\sigma(F)+v\bigr)\cap \tau(F) = F.
              \]
              This follows because \(\relint(F)=(\sigma(F)^\circ+v)\cap \tau(F)^\circ=\relint\bigl((\sigma(F)+v)\cap \tau(F)\bigr)\) by \cref{lem:generic-v-choice}.

        \item \emph{\(\Psi\circ \Phi = \mathrm{Id}_{P_v}\).}
              For any \((\sigma,\tau)\in P_v\), choose \(x\in \relint\bigl((\sigma+v)\cap \tau\bigr)\). The minimal cones containing \(x\) are exactly \(\sigma\) and \(\tau\), again by \(\relint\bigl((\sigma+v)\cap \tau\bigr)=(\sigma^\circ+v)\cap \tau^\circ\).

        \item \emph{\(\Phi\) and \(\Psi\) are order-preserving.}
              If \((\sigma,\tau)\leq (\sigma',\tau')\) in \(P_v\), then \(\sigma\subseteq \sigma'\) and \(\tau\subseteq \tau'\), whence
              \[
                  (\sigma+v)\cap \tau \subseteq (\sigma'+v)\cap \tau',
              \]
              so \(\Phi((\sigma,\tau))\leq \Phi((\sigma',\tau'))\). Conversely, if \(F\leq F'\) in \(F(\mathcal{C})\) and \(x\in \relint(F)\subseteq F'\), the minimal cones for \(F\) are contained in those for \(F'\), giving \(\Psi(F)\leq \Psi(F')\). \qedhere
    \end{enumerate}
\end{proof}

Since \(L_v\) is the sublattice of \(P_v\) generated by all atoms, \(L_v\) is isomorphic to the sublattice of \(F(\mathcal{C})\) generated by all vertices. In other words, \(L_v\) is isomorphic to the face lattice of the subcomplex of \(\mathcal{C}\) generated by all vertices. The Möbius function of such lattices is computed in \cite{BayerSturmfels2001}.

\begin{theorem}
    The Möbius function of \(L_v\) is given by
    \[
        \mu_{L_v}\!\bigl(\hat{0},(\sigma,\tau)\bigr)
        = \begin{cases}
            (-1)^{\dim \sigma + \dim \tau - n + 1},
             & \text{if } (\sigma+v)\cap \tau \text{ is bounded}, \\
            0,
             & \text{otherwise}.
        \end{cases}
    \]
\end{theorem}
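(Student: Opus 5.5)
The plan is to compute \(\mu_{L_v}(\hat 0,(\sigma,\tau))\) geometrically, transporting everything to the polyhedral complex \(\mathcal{C}\) via the isomorphism \(\Phi\colon P_v\cong F(\mathcal{C})\) of the preceding lemma. Fix \((\sigma,\tau)\in L_v\) and let \(F=\Phi((\sigma,\tau))=(\sigma+v)\cap\tau\). By \cref{lem:generic-v-choice}, \(F\) is a nonempty polyhedron of dimension \(d=\dim\sigma+\dim\tau-n\). Since \(F\subseteq\tau\) and \(\tau\) is a pointed cone (\(\Sigma\) is unimodular), \(F\) is pointed, so it has at least one vertex. The atoms of \(L_v\) lying below \((\sigma,\tau)\) correspond to the vertices of \(F\). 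The join in \(L_v\), which is the join in \(P_v\), corresponds to taking the smallest face of \(F\) containing a given set of vertices.

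\textbf{Step 1 (crosscut).} Apply Rota's crosscut theorem to the interval \([\hat 0,(\sigma,\tau)]\) of the atomic lattice \(L_v\), with the atoms as crosscut. This gives
\[
\mu_{L_v}(\hat 0,(\sigma,\tau))=\sum_{k\ge 1}(-1)^k N_k,
\]
where \(N_k\) is the number of \(k\)-sets of vertices of \(F\) whose join is all of \(F\), i.e.\ which lie in no proper face of \(F\). Let \(\Delta_F\) be the simplicial complex on the vertex set of \(F\) whose simplices are the vertex sets contained in some proper face of \(F\). Since every subset of the vertices is either in \(\Delta_F\) or counted by some \(N_k\), and the full Boolean lattice on a nonempty set has alternating sum \(0\), the formula becomes
\[
\mu_{L_v}(\hat 0,(\sigma,\tau))=\tilde\chi(\Delta_F).
\]
One checks directly that the sign conventions match, with the empty set contributing \(-1\) to \(\tilde\chi\).

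\textbf{Step 2 (nerve lemma).} Cover \(\Delta_F\) by the full simplices on the vertex sets of the proper faces of \(F\) that contain a vertex. Because \(F\) is pointed, every nonempty face of \(F\) contains a vertex. Any intersection of such faces is again a face, hence either empty or a nonempty face containing a vertex. So both this simplicial cover of \(\Delta_F\) and the cover of \(\partial F\) by its proper nonempty faces have the same nerve. All members of both covers are convex, hence contractible. Applying the nerve lemma twice gives \(\Delta_F\simeq\partial F\), the relative boundary of \(F\).

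\textbf{Step 3 (topology of the boundary).} If \(F\) is bounded, it is a \(d\)-polytope and \(\partial F\cong S^{d-1}\), so \(\tilde\chi=(-1)^{d-1}=(-1)^{\dim\sigma+\dim\tau-n+1}\). This includes the case \(d=0\), where \(\partial F=\emptyset\) and \(\tilde\chi=-1\). If \(F\) is unbounded and pointed, its relative boundary is homeomorphic to \(\R^{d-1}\): project radially from an interior point, or note it is the graph of a convex function over a hyperplane. It is therefore contractible and \(\tilde\chi=0\). This gives both cases of the formula.

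The main obstacle is Step 2 in the unbounded case. One must make sure that elements \((\sigma,\tau)\in L_v\) with \(F\) unbounded are handled uniformly: the interval \([\hat 0,(\sigma,\tau)]\) in \(L_v\) is not the full face lattice of \(F\), because unbounded proper faces are only present when they are joins of vertices. The crosscut formulation avoids ever describing this interval explicitly. The remaining work is to verify carefully that the contractibility of \(\partial F\) for pointed unbounded polyhedra (including \(d=1\), a ray, where \(\partial F\) is a point) is applied correctly. Alternatively, one can cite Bayer--Sturmfels \cite{BayerSturmfels2001} directly for this step.
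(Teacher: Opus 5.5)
Your proof is correct, but it follows a different route from the paper's. The paper uses \(\Phi\) to identify the interval \([\hat 0,(\sigma,\tau)]\) of \(L_v\) with the vertex--facet lattice \(\hat{\mathcal{V}}(P)\) of \(P=(\sigma+v)\cap\tau\). It then quotes \cite[Theorem~4.4]{BayerSturmfels2001} for \(\mu=(-1)^{\dim P+1}\) or \(0\), and inserts \(\dim P=\dim\sigma+\dim\tau-n\) from \cref{lem:generic-v-choice}.

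You instead reprove that M\"obius computation in three steps:
the atom form of the crosscut theorem (which the paper already uses to expand \([\mathcal{O}_Z]\)) gives \(\mu=\tilde\chi(\Delta_F)\);
the nerve theorem gives \(\Delta_F\simeq\partial F\);
the topology of the relative boundary of a line-free polyhedron finishes.
The only shared ingredient is the final dimension count.

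The paper's version is two lines, but it rests on an external theorem and on an identification of the interval that is stated without argument. Yours is self-contained modulo standard topology. Because it uses only the atoms of the interval and joins (which agree in \(L_v\) and \(P_v\)), you never have to describe the interval exactly, and elements that are not joins of atoms automatically get \(\mu=0\). Your side remark that unbounded faces lie in \(L_v\) only when they are joins of vertices overlooks that \(L_v\) is also closed under meets, but nothing in your argument depends on it.

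One small correction in Step~3: the ``graph of a convex function over a hyperplane'' description of \(\partial F\) fails when \(\operatorname{rec}(F)=\sigma\cap\tau\) is not full-dimensional in the affine span of \(F\). This happens here. For \(\mathbb{P}^2\) with \(v=(1,2)\), \(\sigma=\operatorname{cone}(\rho_2,\rho_3)\) and \(\tau=\operatorname{cone}(\rho_1,\rho_2)\), the cell is the half-strip \(\{0\le x\le 1,\ y\ge x+1\}\), whose boundary is not a graph over any line. Use your other option, radial projection from a relative-interior point. It maps \(\partial F\) homeomorphically onto the set of unit directions not in \(\operatorname{rec}(F)\), which is contractible, so \(\tilde\chi=0\) as you need.
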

\begin{proof}
    The value \(\mu_{L_v}\!\bigl(\hat{0},(\sigma,\tau)\bigr)\) depends only on the interval \([\hat{0},(\sigma,\tau)]\) in \(L_v\). Using the isomorphism \(\Phi\), we reduce to the interval between the empty set and the polyhedron \(P=(\sigma+v)\cap \tau\). The interval \([\hat{0},(\sigma,\tau)]\) is isomorphic to the \emph{vertex-facet lattice} \(\hat{\mathcal{V}}(P)\) of~\(P\) defined in \cite{BayerSturmfels2001}.

    By \cite[Theorem~4.4]{BayerSturmfels2001}, the Möbius function of \(\hat{\mathcal{V}}(P)\) is
    \[
        \mu_{\hat{\mathcal{V}}(P)}(\hat{0},P)
        = \begin{cases}
            (-1)^{\dim P+1},
             & \text{if } P \text{ is bounded}, \\
            0,
             & \text{otherwise}.
        \end{cases}
    \]
    By \cref{lem:generic-v-choice},
    \[
        \dim P = \dim\bigl((\sigma+v)\cap \tau\bigr)
        = \dim \sigma + \dim \tau - n,
    \]
    which completes the proof.
\end{proof}

For a nonempty intersection \(P=(\sigma+v)\cap\tau\), its recession cone is
\[
    \operatorname{rec}(P)=\sigma\cap\tau.
\]
Thus \(P\) is bounded if and only if \(\sigma\cap\tau=\{0\}\). Summarizing the results above, we obtain the main product formula.
\begin{theorem}\label{thm:product-rule}
    Let \(\Sigma\) be a complete strongly unimodular fan of dimension~\(n\) in~\(N_{\mathbb{R}}\), and let \(g_1,g_2\) be Grothendieck weights on \(X_\Sigma\). Then
    \[
        (g_1\cdot g_2)(\{0\})
        = \sum_{\substack{\sigma\cap\tau=\{0\}\\(\sigma+v)\cap \tau\neq\emptyset}}
        (-1)^{\dim \sigma + \dim \tau - n}\,
        g_1(\sigma)\,g_2(\tau).
    \]
\end{theorem}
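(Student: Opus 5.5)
The plan is to assemble the pieces established above. By the diagonal-reduction proposition, it suffices to exhibit coefficients \(m_{\sigma,\tau}^{\{0\}}\) with \(\delta_*[\mathcal{O}_X]=\sum m_{\sigma,\tau}^{\{0\}}[\mathcal{O}_{V(\sigma\times\tau)}]\) in \(K(X\times X)\). Then \((g_1\cdot g_2)(\{0\})=\sum m_{\sigma,\tau}^{\{0\}}g_1(\sigma)g_2(\tau)\).

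We proceed in three steps.

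\emph{Step 1: degenerate the diagonal.} Flatness of \(\overline{\Gamma}\to\mathbb{P}^1\) gives \([\mathcal{O}_{\delta(X)}]=[\mathcal{O}_Z]\). By \cref{lem:diagonal-degeneration-reduced}, \(Z\) is the reduced union of the \(V(\sigma)\times V(\tau)\) with \((\sigma,\tau)\in C\).

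\emph{Step 2: inclusion--exclusion.} The Čech-complex lemma applies to \(X\times X\), which is a smooth toric variety with fan \(\Sigma\times\Sigma\). It expresses \([\mathcal{O}_Z]\) as an alternating sum of classes of intersections. Each intersection is \(V(\sigma\times\tau)\) for the join in \(P_v\), or empty when the join is \(\hat 1\). The crosscut theorem rewrites this as
\[
[\mathcal{O}_Z]=\sum_{(\sigma,\tau)\in L_v}-\mu_{L_v}(\hat 0,(\sigma,\tau))\,[\mathcal{O}_{V(\sigma\times\tau)}].
\]

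\emph{Step 3: insert the Möbius function.} By the Möbius theorem, \(-\mu_{L_v}(\hat0,(\sigma,\tau))\) equals \((-1)^{\dim\sigma+\dim\tau-n}\) when \((\sigma+v)\cap\tau\) is bounded, and \(0\) otherwise. Since \(\operatorname{rec}((\sigma+v)\cap\tau)=\sigma\cap\tau\), boundedness is equivalent to \(\sigma\cap\tau=\{0\}\). This yields exactly the displayed sum, except that it runs over \(L_v\) rather than over all pairs with \((\sigma+v)\cap\tau\neq\emptyset\).

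The main obstacle is therefore to show that every \((\sigma,\tau)\in P_v\) with \(\sigma\cap\tau=\{0\}\) already lies in \(L_v\). Under the isomorphism \(P_v\cong F(\mathcal{C})\), such a pair corresponds to a bounded cell \(P=(\sigma+v)\cap\tau\), i.e.\ a polytope. A polytope is the convex hull, and hence the join in the face lattice, of its vertices. Each vertex of \(P\) corresponds to an atom in \(C\). So \((\sigma,\tau)\) is a join of atoms and lies in \(L_v\).

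Conversely, elements of \(L_v\) that are not bounded contribute zero. For these, no check is needed beyond knowing that the Möbius formula holds on \(L_v\) as stated.

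Finally, pairs with \((\sigma+v)\cap\tau=\emptyset\) never occur in \(L_v\), which justifies the second constraint on the index set.

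I would also double-check that the flatness sign \(\lambda_{-v}\) matches the convention \((\sigma+v)\cap\tau\) used in \cref{lem:diagonal-degeneration-reduced}. Those two fixes align the sum exactly with the statement.
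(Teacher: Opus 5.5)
Your proposal is correct and follows the paper's argument essentially step for step: the diagonal reduction, the flat degeneration to the reduced special fiber $Z$ (\cref{lem:diagonal-degeneration-reduced}), the C\v{e}ch/crosscut expansion over $L_v$, the Bayer--Sturmfels M\"obius values combined with $\operatorname{rec}((\sigma+v)\cap\tau)=\sigma\cap\tau$, and finally the observation that a bounded cell is the join of its vertices, which lets the restriction to $L_v$ be dropped. The sign convention for $\lambda_{-v}$ that you want to double-check is already fixed inside the proof of \cref{lem:diagonal-degeneration-reduced}, where the cells come out as $(\sigma+v)\cap\tau$, so no further adjustment is needed.
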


For every pair in this sum, \((\sigma+v)\cap\tau\) is a nonempty bounded polytope, so \((\sigma,\tau)\) belongs to \(L_v\), because a bounded polytope is always the join of its vertices. Therefore the condition \((\sigma,\tau)\in L_v\) can safely be dropped from the summation.

The formula for a general cone \(\gamma\) follows by applying \cref{thm:product-rule} to \(\Star(\gamma)\) in the quotient \(N_\mathbb{R}/\operatorname{span}_\R(\gamma)\). For cones \(\sigma,\tau\supseteq\gamma\), the nonempty condition in the quotient is equivalent to \((\sigma+v)\cap\tau\neq\emptyset\), and the quotient cones intersect only at the origin if and only if \(\sigma\cap\tau=\gamma\); see \cref{rmk:boundedness}. The exponent
\[
    (\dim\sigma-\dim\gamma)+(\dim\tau-\dim\gamma)-(n-\dim\gamma)
\]
has the same parity as \(\dim\sigma+\dim\tau-n+\dim\gamma\).

\begin{theorem}\label{thm:product-rule-arbitrary-cone}
    Let \(\Sigma\) be a complete strongly unimodular fan of dimension~\(n\) in~\(N_{\mathbb{R}}\). Then for any two Grothendieck weights \(g_1,g_2\) on \(X_\Sigma\) and any cone \(\gamma\in \Sigma\),
    \[
        (g_1\cdot g_2)(\gamma)
        = \sum_{\substack{\sigma\cap\tau=\gamma\\(\sigma+v)\cap \tau\neq\emptyset}}
        (-1)^{\dim \sigma + \dim \tau - n + \dim \gamma}\,
        g_1(\sigma)\,g_2(\tau),
    \]
    where the sum ranges over cones \(\sigma,\tau\in\Sigma\).
\end{theorem}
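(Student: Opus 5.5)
The plan is to reduce to \cref{thm:product-rule} applied to the star fan of \(\gamma\). Write \(\bar N = N/N_\gamma\) and let \(\bar\Sigma=\Star_\Sigma(\gamma)\subseteq \bar N_\R\). This fan is complete of dimension \(n-\dim\gamma\), and it is strongly unimodular by \cref{proposition:strongly-unimodular-properties}(2). Its cones \(\bar\sigma\) correspond bijectively to cones \(\sigma\supseteq\gamma\) of \(\Sigma\), with \(\dim\bar\sigma=\dim\sigma-\dim\gamma\).

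\textbf{Step 1: transfer the weights to \(V(\gamma)\).} On the geometric side, \(V(\gamma)\cong X_{\bar\Sigma}\). For \(\sigma\supseteq\gamma\), the restriction of \(\mathcal{O}_{V(\sigma)}\) to \(V(\gamma)\) is the orbit closure \(V(\bar\sigma)\) of \(X_{\bar\Sigma}\). Let \(\iota\colon V(\gamma)\hookrightarrow X\) be the inclusion and, for a Grothendieck weight \(g=g_\alpha\), set \(\bar g:=g_{\iota^*\alpha}\) on \(\bar\Sigma\). The projection formula gives
\[
\bar g(\bar\sigma)=\chi(\iota^*\alpha\cdot x_{\bar\sigma})=\chi(\alpha\cdot\iota_*x_{\bar\sigma})=\chi(\alpha\cdot x_\sigma)=g(\sigma).
\]
Since \(\iota^*\) is a ring map, \((g_1\cdot g_2)(\gamma)=\chi(\alpha_1\alpha_2 x_\gamma)=(\bar g_1\cdot\bar g_2)(\{0\})\). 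Applying \cref{thm:product-rule} to \(\bar\Sigma\) with the image \(\bar v\) of \(v\) then expresses \((g_1\cdot g_2)(\gamma)\) as a sum over pairs \(\bar\sigma,\bar\tau\) with \(\bar\sigma\cap\bar\tau=\{0\}\) and \((\bar\sigma+\bar v)\cap\bar\tau\neq\emptyset\). Each term carries the sign \((-1)^{\dim\sigma+\dim\tau-2\dim\gamma-(n-\dim\gamma)}\), which has the stated parity.

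\textbf{Step 2: translate the index set back to \(\Sigma\).} Three facts are needed.

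First, \(\bar v\) is generic for \(\bar\Sigma\). The linear spans of \(\bar\sigma,\bar\tau\) are the images of \(L_\sigma,L_\tau\), and both contain \(L_\gamma\). So \(\bar L_\sigma+\bar L_\tau\) is a proper subspace exactly when \(L_\sigma+L_\tau\) is, and the condition \(v\notin L_\sigma+L_\tau\) descends. The dimension statements of \cref{lem:generic-v-choice} descend in the same way.

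Second, \(\bar\sigma\cap\bar\tau=\{0\}\) if and only if \(\sigma\cap\tau=\gamma\). For cones containing \(\gamma\) in a fan, \(\sigma\cap\tau\) is a common face containing \(\gamma\). Its image in the quotient is \(\bar\sigma\cap\bar\tau\), since locally near \(\gamma\) the star is the quotient of the fan.

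Third, for \(\sigma,\tau\supseteq\gamma\), we need \((\bar\sigma+\bar v)\cap\bar\tau\neq\emptyset\) if and only if \((\sigma+v)\cap\tau\neq\emptyset\). The direction \(\Leftarrow\) is immediate. For \(\Rightarrow\), write \(t=s+v+\ell\) with \(s\in\sigma\), \(t\in\tau\), and \(\ell\in L_\gamma\). Write \(\ell=a-b\) with \(a,b\in\gamma\), which is possible since \(\gamma\) spans \(L_\gamma\). Then \(t+b=(s+a)+v\), with \(t+b\in\tau\) and \(s+a\in\sigma\), since both cones contain \(\gamma\).

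Finally, pairs with \(\sigma\not\supseteq\gamma\) or \(\tau\not\supseteq\gamma\) never satisfy \(\sigma\cap\tau=\gamma\). So summing over all \(\sigma,\tau\in\Sigma\) is the same as summing over pairs in the star, which gives the formula.

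The main obstacle is justifying the genericity transfer and the use of \(v\) itself rather than a freshly chosen vector for each star. This is resolved by noting that the open set \(U\) in \cref{lem:generic-v-choice} is defined by avoiding the spaces \(L_\sigma+L_\tau\), and these conditions are compatible with quotienting by \(L_\gamma\). The second fact (the intersection identification) is the step I would check most carefully, possibly recording it as the remark \cref{rmk:boundedness} referenced earlier.
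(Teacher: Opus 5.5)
Your proposal is correct and follows the same route as the paper: apply \cref{thm:product-rule} to \(\Star(\gamma)\) and translate the nonemptiness and intersection conditions back as in \cref{rmk:boundedness}; your second fact follows cleanly from your \(\ell=a-b\) trick with \(v=0\) together with \(\sigma\cap\operatorname{span}_\R(\gamma)=\gamma\). Your Step~1 and the genericity transfer make explicit what the paper leaves implicit, but note that Step~1 only needs \(\iota_*x_{\bar\sigma}=x_\sigma\), since the (derived) pullback \(\iota^*x_\sigma\) is in general not \(x_{\bar\sigma}\) in \(K(V(\gamma))\).
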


\begin{remark}\label{rmk:boundedness}
    Let \(\sigma,\tau\in\Sigma\) contain \(\gamma\), set \(L=\operatorname{span}_\R(\gamma)\), and let \(\pi:N_\R\to N_\R/L\) be the quotient map. Then
    \[
        \pi\bigl((\sigma+v)\cap\tau\bigr)=(\pi(\sigma)+\pi(v))\cap\pi(\tau).
    \]
    Indeed, if \(x\in\sigma+v\) and \(y\in\tau\) have the same image, write \(x-y=a-b\) with \(a,b\in\gamma\), using \(L=\gamma-\gamma\). Then \(x+b=y+a\) lies in \((\sigma+v)\cap\tau\) and has the same image. Taking \(v=0\) also gives \(\pi(\sigma)\cap\pi(\tau)=\pi(\sigma\cap\tau)\).

    Since \(\gamma\) is a face of \(\sigma\), we have \(\sigma\cap L=\gamma\). Consequently,
    \[
        \pi(\sigma)\cap\pi(\tau)=\{0\}
        \quad\Longleftrightarrow\quad \sigma\cap\tau\subseteq L
        \quad\Longleftrightarrow\quad \sigma\cap\tau=\gamma.
    \]
    When \((\sigma+v)\cap\tau\neq\emptyset\), the recession cone of its image under \(\pi\) is \(\pi(\sigma)\cap\pi(\tau)\). Thus this image is bounded exactly when \(\sigma\cap\tau=\gamma\). This condition depends only on the cones \(\sigma,\tau,\gamma\); the nonempty condition \((\sigma+v)\cap\tau\neq\emptyset\) remains a separate requirement in the sum.
\end{remark}

\begin{example}
    The smallest example is the projective plane \(\mathbb{P}^2\). Its fan has three rays generated by \(\rho_1=(1,0)\), \(\rho_2=(0,1)\), and \(\rho_3=(-1,-1)\). One easily checks that \(N_\sigma + N_\tau = \mathbb{Z}^2\) for any two \(2\)-dimensional cones \(\sigma,\tau\), so the fan of \(\mathbb{P}^2\) is strongly unimodular.

    To write down the product rule, we choose \(v=(1,2)\) as a generic vector. For any two Grothendieck weights \(g_1,g_2\) on \(\mathbb{P}^2\), \cref{thm:product-rule} gives
    \begin{align*}
        (g_1\cdot g_2)(\{0\})
        =\; & g_1(\{0\})\,g_2(\operatorname{cone}(\rho_1,\rho_2))
        + g_1(\rho_3)\,g_2(\rho_2)
        + g_1(\operatorname{cone}(\rho_1,\rho_3))\,g_2(\{0\})        \\
            & - g_1(\rho_3)\,g_2(\operatorname{cone}(\rho_1,\rho_2))
        - g_1(\operatorname{cone}(\rho_1,\rho_3))\,g_2(\rho_2).
    \end{align*}
    In \(K(\mathbb{P}^2)\simeq \mathbb{Z}[x]/(x^3)\), where \(x=[\mathcal{O}_H]\) for a hyperplane \(H\), this becomes
    \begin{align*}
        \chi(g_1\,g_2)
        =\; & \chi(g_1)\,\chi(g_2 x^2)
        + \chi(g_1 x)\,\chi(g_2 x)
        + \chi(g_1 x^2)\,\chi(g_2)         \\
            & - \chi(g_1 x)\,\chi(g_2 x^2)
        - \chi(g_1 x^2)\,\chi(g_2 x).
    \end{align*}
    Setting \(g_1=x^i\), \(g_2=x^j\) for \(i,j=0,1,2\), the left-hand side defines a matrix
    \[
        B = \begin{bmatrix}
            1 & 1 & 1 \\
            1 & 1 & 0 \\
            1 & 0 & 0
        \end{bmatrix},
    \]
    and the right-hand side corresponds to \(BmB\), where
    \[
        m = \begin{bmatrix}
            0 & 0  & 1  \\
            0 & 1  & -1 \\
            1 & -1 & 0
        \end{bmatrix}.
    \]
    Our theorem asserts that \(BmB=B\), which can be directly verified (in fact, \(Bm=I\)).

    \begin{figure}[ht]
        \centering
        \begin{tikzpicture}[scale=0.6]
            \coordinate (O) at (0,0);
            \coordinate (V) at (1,2);

            \draw[thick] (O) -- (3,0);
            \draw[thick] (O) -- (0,3);
            \draw[line width=1.2pt, red] (0,0) -- (0,1);
            \draw[line width=1.2pt, red,dashed] (0,1) -- (1,2);
            \draw[thick] (O) -- (-2,-2);
            \node[below] at (0,0) {\(\Sigma\)};

            \draw[thick, dashed, blue] (V) -- +(3,0);
            \draw[thick, dashed, blue] (V) -- +(0,3);
            \draw[thick, dashed, blue] (V) -- +(-3,-3);
            \fill (V) circle (1pt) node[left] {\(v\)};
            \node[blue] at (2,2.3) {\(\Sigma+v\)};

            \coordinate (P) at (0,1);
            \fill (P) circle (1.2pt) node[right] {\scriptsize \(p\)};
        \end{tikzpicture}
        \caption{The fan \(\Sigma\) (solid) and its translate \(\Sigma+v\) (dashed).}
    \end{figure}
\end{example}

\section{Grothendieck weights on permutohedral toric varieties}
In this section, we apply the \(K\)-balancing condition and product rule to the permutohedral toric variety and determine the ring structure of Grothendieck weights.

We use the notation introduced in \cref{subsec:permutohedron}. Fix a natural number \(n\), and let \(\Sigma_{[n]}\) be the normal fan of the permutohedron \(\Pi_n\). We use flags of subsets to index cones.

We first determine the \(K\)-balancing condition on \(\Sigma_{[n]}\). Let \(Q=Q_{\Sigma_{[n]}}\) be the lattice polytope defined in \cref{lem:pm1-dual-basis}. By \cref{thm:k-balancing}, a function \(g:\Sigma_{[n]} \to \Z\) is a Grothendieck weight if and only if for each pair \((q,\tau)\) with \(q\in Q\cap M\) and \(\tau\) a \(q\)-neutral cone, one has
\[
    \sum_{\substack{\sigma\supsetneq \tau\\\sigma(1)\setminus\tau(1)\subseteq \mathbf{P}_q}}(-1)^{\dim \sigma}g(\sigma)
    =
    \sum_{\substack{\sigma\supsetneq \tau\\\sigma(1)\setminus\tau(1)\subseteq \mathbf{N}_q}}(-1)^{\dim \sigma}g(\sigma).
\]

\begin{lemma}
    Let \(S=\{e_i-e_j:1\leq i<j\leq n\}\). Then
    \begin{enumerate}
        \item \(S\) is a subset of \(Q\cap M\),
        \item For each \(\tau\in\Sigma_{[n]}\), \(S\cap M(\tau)\) contains an integral basis of \(M(\tau)\).
    \end{enumerate}
\end{lemma}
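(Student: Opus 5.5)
For part (1), I would compute the pairing directly. The vector \(e_i-e_j\) has coordinate sum zero, so it lies in \(M\). For any nonempty proper \(S\subsetneq[n]\) the pairing is
\[
\langle e_i-e_j,\bar e_S\rangle=[i\in S]-[j\in S]\in\{-1,0,1\}.
\]
This pairing is well defined on \(N\) because \(\langle e_i-e_j,(1,\dots,1)\rangle=0\). Every ray of \(\Sigma_{[n]}\) has primitive generator \(\bar e_S\), so \(e_i-e_j\in Q\).

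For part (2), I would fix \(\tau=\sigma_{\mathcal{F}}\) and let \(\pi=S_1|\cdots|S_{r+1}\) be the ordered set partition with \(\mathcal{F}=\mathcal{F}_\pi\). First I would identify
\[
M(\tau)=\{a\in\Z^n:\textstyle\sum_i a_i=0,\ \sum_{i\in S_1\cup\cdots\cup S_k}a_i=0\ \text{for } k\le r\}.
\]
Taking successive differences shows this equals \(\{a:\sum_{i\in S_k}a_i=0 \text{ for all } k=1,\dots,r+1\}\). This lattice splits as a direct sum over the blocks, \(M(\tau)=\bigoplus_k M_{S_k}\), where \(M_{S_k}\) consists of the zero-sum integer vectors supported on \(S_k\).

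Next I would write down a basis inside \(S\). In each block, list \(S_k=\{s_1<s_2<\cdots<s_m\}\). The vectors \(e_{s_1}-e_{s_2},\,e_{s_2}-e_{s_3},\dots,e_{s_{m-1}}-e_{s_m}\) all have smaller index first, so they lie in \(S\), and they form the standard \(\Z\)-basis of the type-A root lattice of \(S_k\). To justify this, take any zero-sum integer vector \(a\) supported on \(S_k\). It equals \(\sum_{t=1}^{m-1}(a_{s_1}+\cdots+a_{s_t})(e_{s_t}-e_{s_{t+1}})\) with integer coefficients, and these vectors are linearly independent. Each \(e_i-e_j\) with \(i,j\) in the same block pairs to zero with every \(\bar e_{F_k}\), because each \(F_k\) is a union of blocks; hence it lies in \(M(\tau)\). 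Taking the union of these block bases over all \(k\) gives an integral basis of \(M(\tau)\) contained in \(S\cap M(\tau)\).

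The only mildly delicate step is the identification of \(M(\tau)\) as the direct sum of the block root lattices, in particular checking that it is saturated and has the correct rank \(n-1-r\). The count works out: the block bases contain \(\sum_k(|S_k|-1)=n-(r+1)\) vectors in total. Once this is in place, \cref{rmk:smaller-Q} lets us use \(S\) in place of \(Q\cap M\) in \cref{thm:k-balancing}.
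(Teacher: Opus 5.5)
Your proof is correct and follows the paper's argument: the same direct pairing check \(\langle e_i-e_j,\bar e_T\rangle\in\{-1,0,1\}\) for (1), and the same decomposition \(M(\tau)=\bigoplus_t L_t\) into zero-sum lattices on the blocks of the flag for (2). The only difference is the choice of basis in each block: you use consecutive differences \(e_{s_t}-e_{s_{t+1}}\), while the paper uses \(e_{\min B_t}-e_u\); both lie in \(S\) and both are \(\Z\)-bases of the block lattice.
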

\begin{proof}
    (1) For any \(1\le i<j\le n\), we have \(e_i-e_j\in M\).
    To show \(e_i-e_j\in Q\), let \(T\subsetneq [n]\) be a nonempty proper subset. The ray \(\rho_T\in\Sigma_{[n]}(1)\) has primitive generator \(e_T\in N\), and
    \[
        \langle e_i-e_j,e_T\rangle
        =
        \begin{cases}
            1,  & i\in T,\ j\notin T, \\
            -1, & i\notin T,\ j\in T, \\
            0,  & \text{otherwise}.
        \end{cases}
    \]
    Hence \(\langle e_i-e_j,e_T\rangle\in\{-1,0,1\}\) for all rays, so \(e_i-e_j\in Q\). Therefore \(S\subseteq Q\cap M\).

    (2) Fix \(\tau=\sigma_{\mathcal F}\), where
    \[
        \mathcal F:\emptyset\subsetneq F_1\subsetneq\cdots\subsetneq F_r\subsetneq [n].
    \]
    Let
    \[
        B_1=F_1,\quad B_2=F_2\setminus F_1,\quad \dots,\quad B_r=F_r\setminus F_{r-1},\quad B_{r+1}=[n]\setminus F_r.
    \]
    Then
    \[
        M(\tau)=\left\{a=(a_1,\dots,a_n)\in M:\sum_{u\in B_t}a_u=0\ \text{for all }t=1,\dots,r+1\right\}.
    \]
    Hence
    \[
        M(\tau)=\bigoplus_{t=1}^{r+1}L_t,\qquad
        L_t:=\{a\in\Z^n:\operatorname{supp}(a)\subseteq B_t,\ \sum_{u\in B_t}a_u=0\}.
    \]
    For each block \(B_t\), choose \(b_t=\min(B_t)\) and define
    \[
        \mathcal B_t:=\{\,e_{b_t}-e_u\mid u\in B_t\setminus\{b_t\}\,\}.
    \]
    Every vector in \(\mathcal B_t\) lies in \(S\), and also in \(M(\tau)\).
    Moreover, \(\mathcal B_t\) is a \(\Z\)-basis of \(L_t\). Therefore
    \[
        \mathcal B:=\bigcup_{t=1}^{r+1}\mathcal B_t
    \]
    is a \(\Z\)-basis of \(M(\tau)\) and satisfies \(\mathcal B\subseteq S\cap M(\tau)\). We conclude that \(S\cap M(\tau)\) contains an integral basis of \(M(\tau)\).
\end{proof}

By \cref{rmk:smaller-Q}, the \(K\)-balancing condition on \(\Sigma_{[n]}\) can be indexed by \(S\). To apply \cref{thm:k-balancing} to \(\Sigma_{[n]}\), we first translate the theorem using the language of flags of subsets. We summarize the result in the following lemma.

\begin{lemma}
    Suppose \(\tau=\sigma_{\mathcal{G}}\), \(\sigma=\sigma_{\mathcal{F}}\), and \(q=e_i-e_j\). Then
    \begin{enumerate}
        \item \(\tau\) is \(q\)-neutral if and only if for each \(G_i\in \mathcal{G}\), either \(\{i,j\}\cap G_i=\emptyset\), or \(\{i,j\}\subset G_i\). We call such a flag of subsets \(\{i,j\}\)-neutral.
        \item \(\tau\subset\sigma\) if and only if \(\mathcal{F}\) refines \(\mathcal{G}\), and we denote this by \(\mathcal{F}\succeq \mathcal{G}\).
        \item \(\sigma(1)\setminus\tau(1)\subseteq \mathbf{P}_q\) if and only if \(i\in H, j\not\in H\) for all \(H\in \mathcal{F}\setminus \mathcal{G}\).
        \item \(\sigma(1)\setminus\tau(1)\subseteq \mathbf{N}_q\) if and only if \(i\not\in H, j\in H\) for all \(H\in \mathcal{F}\setminus \mathcal{G}\).
        \item \(\dim \sigma=\ell(\mathcal{F})\coloneqq |\mathcal{F}|\), the length of the flag.
    \end{enumerate}
\end{lemma}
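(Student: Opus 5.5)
The lemma is a dictionary between cones of \(\Sigma_{[n]}\) and flags, so I will verify the five items in turn. The single computation used throughout is the pairing of \(q=e_i-e_j\) with a ray generator \(\bar e_T\). As in the preceding lemma, this equals \(1\) if \(i\in T,\ j\notin T\), equals \(-1\) if \(j\in T,\ i\notin T\), and equals \(0\) otherwise.

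For (1), \(\tau=\sigma_{\mathcal G}\) is \(q\)-neutral exactly when \(\langle q,\bar e_{G}\rangle=0\) for every ray \(\rho_G\) of \(\tau\), i.e.\ for every \(G\in\mathcal G\). By the pairing formula, this says that \(G\) contains both or neither of \(i,j\).

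For (2), the faces of the simplicial cone \(\sigma_{\mathcal F}=\operatorname{cone}(\bar e_{F_1},\dots,\bar e_{F_k})\) are the cones on subsets of its rays. Since \(\Sigma_{[n]}\) is a fan, \(\tau\subseteq\sigma\) for two cones of the fan forces \(\tau\) to be a face of \(\sigma\). So \(\tau\subseteq\sigma\) holds iff \(\tau(1)\subseteq\sigma(1)\). This requires the rays \(\rho_S\), \(S\) a nonempty proper subset, to be pairwise distinct, which is true because the vectors \(\bar e_S\) are distinct primitive vectors in \(N=\Z^n/\Z(1,\dots,1)\) for \(S\neq\emptyset,[n]\). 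Hence \(\tau\subseteq\sigma\) is equivalent to every set of \(\mathcal G\) occurring in \(\mathcal F\), which is the definition of \(\mathcal F\succeq\mathcal G\).

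For (3) and (4), under the identification in (2), \(\sigma(1)\setminus\tau(1)\) is \(\{\rho_H: H\in\mathcal F\setminus\mathcal G\}\). The condition \(\rho_H\in\mathbf P_q\) means \(\langle q,\bar e_H\rangle=1\), which by the pairing formula is \(i\in H,\ j\notin H\). Similarly, \(\rho_H\in\mathbf N_q\) is \(j\in H,\ i\notin H\).

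For (5), the generators \(\bar e_{F_1},\dots,\bar e_{F_k}\) are part of a lattice basis by unimodularity, so they are linearly independent. Therefore \(\dim\sigma_{\mathcal F}=k=\ell(\mathcal F)\).

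None of these steps poses a real obstacle. The only point that needs a moment's care is (2): one must use distinctness of the rays \(\rho_S\) and the fact that containment of cones in a fan is the face relation.
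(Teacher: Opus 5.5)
Your proof is correct and matches the paper's approach. The paper states this lemma without a separate proof; it relies on the pairing \(\langle e_i-e_j,\bar e_T\rangle\in\{0,\pm1\}\) computed in the preceding lemma and on the standard identification of rays with nonempty proper subsets and of faces of \(\sigma_{\mathcal F}\) with subflags, which is exactly what you check item by item. Your remarks that the rays \(\rho_S\) are distinct and that containment of two cones in a fan is the face relation fill in the only points the paper leaves implicit.
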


\permutohedralBalancing*

The product rule for Grothendieck weights follows from \cref{thm:product-rule-arbitrary-cone}. For the reader's convenience, we restate \cref{cor:product-rule-general} here.

\permutohedralProduct*

\begin{remark}\label{rmk:generic-permutohedron}
    Let \((v_1,\dots,v_n)\) represent \(v\in N_\R\). Then \(v\) is generic for \(\Sigma_{[n]}\) in the sense of \cref{lem:generic-v-choice} if
    \[
        \sum_{i\in S}v_i\neq\sum_{i\in T}v_i
        \qquad\text{for all distinct }S,T\subseteq[n]\text{ with }|S|=|T|.
    \]
    This condition is unchanged by adding a common constant to the coordinates, so it is well defined on \(N_\R\). In particular, \(v_i=2^i\) gives a generic vector.
\end{remark}
\begin{proof}
    By the proof of \cref{lem:generic-v-choice}, it suffices to show that \(v\) avoids every proper sum \(L_\sigma+L_\tau\). Fix such \(\sigma,\tau\). Form a matrix whose rows are the indicator representatives of their primitive ray generators, together with \(\mathbf 1=(1,\ldots,1)\). Apply the row operations in \cref{eg:braid_is_strongly_unimodular}, and denote the resulting totally unimodular matrix by \(A\). Its row space is the inverse image of \(L_\sigma+L_\tau\) in \(\R^n\), so
    \[
        v\in L_\sigma+L_\tau
        \iff\operatorname{rank}\begin{pmatrix}A\\v\end{pmatrix}=\operatorname{rank}A.
    \]

    Let \(r=\operatorname{rank}A<n\), and choose \(r\) independent rows to form a matrix \(B\). Choose \(r+1\) columns \(I\) containing a nonzero \(r\times r\) minor of \(B\). Expanding along the last row, the total unimodularity of \(A\) gives
    \[
        \det\begin{pmatrix}B_I\\v_I\end{pmatrix}
        =\sum_{i\in I}c_i v_i,\qquad c_i\in\{0,\pm1\},
    \]
    where the cofactors \(c_i\) are not all zero. The vector \(\mathbf 1\) belongs to the row space of \(A\), hence also of \(B\). Therefore
    \[
        \sum_{i\in I}c_i=\det\begin{pmatrix}B_I\\\mathbf 1_I\end{pmatrix}=0.
    \]
    Thus \(S=\{i:c_i=1\}\) and \(T=\{i:c_i=-1\}\) are nonempty and have equal cardinality, and the hypothesis gives
    \[
        \det\begin{pmatrix}B_I\\v_I\end{pmatrix}
        =\sum_{i\in S}v_i-\sum_{i\in T}v_i\neq0
    \]
    for this \(I\). Therefore, \(v\notin L_\sigma+L_\tau\).
\end{proof}

For the rest of the section, we compute Grothendieck weights of nef line bundles on \(X\).

Recall that nef line bundles on \(X=X_{[n]}\) are in one-to-one correspondence with generalized permutohedra. Let \(P\) be any generalized permutohedron, and let \(\mathcal{L}_P\) be the corresponding nef line bundle. Let \(g_P\) denote the Grothendieck weight defined by \([\mathcal{L}_P]\in K(X)\), i.e.,
\[
    g_P(\mathcal{F})=\chi(\mathcal{L}_P \mathcal{O}_{V(\sigma)})=\chi(\mathcal{L}_P|_{V(\sigma)}).
\]

To express \(g_P\) purely in terms of \(P\), we need to introduce the following terminology.

\begin{definition}
    Let \(P\subset M_\R\) be a generalized permutohedron, and let \(h_P(u):=\min_{m\in P}\langle m,u\rangle\) denote its divisor support function. For a flag of subsets \(\mathcal{F}:\emptyset\subsetneq F_1\subsetneq\cdots\subsetneq F_k\subsetneq[n]\), the \(\mathcal{F}\)-\emph{face} of \(P\) is
    \[
        \face_{\mathcal{F}}(P):=\{m\in P:\langle m,u\rangle=h_P(u)\}
    \]
    for any \(u\) in the relative interior of \(\sigma_{\mathcal{F}}=\operatorname{cone}(e_{F_1},\dots,e_{F_k})\). This is independent of the choice of \(u\) since the normal fan of \(P\) coarsens \(\Sigma_{[n]}\). For the empty flag \(\mathcal{F}=\emptyset\), set \(\face_\emptyset(P):=P\).
\end{definition}

It is well known that on a complete toric variety, for any nef line bundle with corresponding polytope \(P\),
\[
    \chi(\mathcal{L}_P)=|P\cap M|,
\]
which follows from \cite[Proposition~4.3.3 and Theorem~9.2.1]{CoxLittleSchenck2011}. Applying the same argument to \(\mathcal{L}_P|_{V(\sigma)}\), we obtain
\begin{lemma}\label{lem:gw-generalized-permutohedra}
    For a generalized permutohedron \(P\) in \(M_\R\), the Grothendieck weight \(g_P\) is given by
    \[
        g_P(\mathcal{F})=|\face_{\mathcal{F}}(P)\cap M|.
    \]
\end{lemma}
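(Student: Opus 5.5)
The plan is to reduce the computation of \(g_P(\mathcal{F})=\chi(\mathcal{L}_P|_{V(\sigma_{\mathcal{F}})})\) to the lattice-point count for a nef line bundle on a smaller complete toric variety. Write \(\sigma=\sigma_{\mathcal{F}}\). The orbit closure \(V(\sigma)\) is the smooth complete toric variety of the star fan \(\Star(\sigma)\) in \(N(\sigma)_\R=N_\R/\operatorname{span}_\R(\sigma)\). Its character lattice is \(M(\sigma)=\sigma^\perp\cap M\). By \cref{proposition:strongly-unimodular-properties} this star fan is again unimodular, and it is complete because \(\Sigma_{[n]}\) is complete.

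First, I restrict the line bundle. Nefness is preserved under restriction to a closed subvariety, so \(\mathcal{L}_P|_{V(\sigma)}\) is a nef line bundle on \(V(\sigma)\). To identify its polytope, I choose a vertex \(m_0\) of \(\face_{\mathcal{F}}(P)\); it lies in \(M\) since \(P\) is a lattice polytope. I then replace \(D_P\) by the linearly equivalent divisor \(D_P-\operatorname{div}(\chi^{m_0})\), whose coefficients vanish on every ray of \(\sigma\). This holds because \(\langle m_0,\bar e_{F_i}\rangle=h_P(\bar e_{F_i})\): the point \(m_0\) minimizes every \(u\) in \(\sigma\), since the minimizing face is constant on the relative interior and \(m_0\) lies in the faces of the boundary rays as well. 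The translated divisor is supported away from \(V(\sigma)\) except along rays of \(\Star(\sigma)\). Its restriction is therefore the torus-invariant divisor on \(V(\sigma)\) whose support function is \(h_{P-m_0}\) restricted to the cones containing \(\sigma\), viewed on \(N(\sigma)\). This is the standard description in \cite[Proposition~4.2.12 / Section~4.2]{CoxLittleSchenck2011}, or one can verify it directly on charts.

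Second, I identify the polytope of the restricted divisor as \(\face_{\mathcal{F}}(P)-m_0\subset \sigma^\perp\). A point \(m\in\sigma^\perp\) satisfies \(\langle m,u\rangle\ge h_{P-m_0}(u)\) for all rays of \(\Star(\sigma)\) exactly when \(m+m_0\in P\) lies on the face where all of \(\sigma\) attains its minimum. This uses that \(P\) is cut out by its facet inequalities and that locally near \(\face_{\mathcal{F}}(P)\) only the inequalities from rays in cones containing \(\sigma\) matter. The key input here is that the normal fan of \(P\) coarsens \(\Sigma_{[n]}\).

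Third, I apply \(\chi(\mathcal{L})=|P\cap M|\) for nef bundles on the complete toric variety \(V(\sigma)\). This gives
\[
g_P(\mathcal{F})=|(\face_{\mathcal{F}}(P)-m_0)\cap M(\sigma)|=|\face_{\mathcal{F}}(P)\cap M|,
\]
since translating by \(m_0\in M\) identifies \(\sigma^\perp\cap M\) shifted by \(m_0\) with the lattice points of the affine span of the face. The main obstacle is the second step: identifying the polytope of the restricted divisor with the face. This needs a careful local check that the restriction of the support function is computed by the face, which I would do through the chart \(U_\gamma\) for a maximal \(\gamma\supseteq\sigma\), where \(\mathcal{L}_P\) is trivialized by \(\chi^{m_\gamma}\) with \(m_\gamma\) the vertex of \(P\) for \(\gamma\).
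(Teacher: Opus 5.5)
Your proposal is correct and follows essentially the same route as the paper. Your lattice vertex \(m_0\) of \(\face_{\mathcal{F}}(P)\) plays the role of the paper's \(h^\ast_{P,\sigma}\in M\). The paper handles the step you flag as the main obstacle by comparing support functions directly: for a lift \(u\) in a cone \(\tau\supseteq\sigma\), the minimum \(\min_{m\in P}\langle m,u\rangle\) is attained on \(\face_\tau(P)\subseteq\face_{\mathcal{F}}(P)\). It then concludes with the lattice-point count and Demazure vanishing, as you do.

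One small slip: with \(D_P=\sum_\rho -h_P(u_\rho)D_\rho\) and \(\operatorname{div}(\chi^{m})=\sum_\rho\langle m,u_\rho\rangle D_\rho\), the divisor with vanishing coefficients on \(\sigma(1)\) and polytope \(P-m_0\) is \(D_P+\operatorname{div}(\chi^{m_0})\), not \(D_P-\operatorname{div}(\chi^{m_0})\).
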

\begin{proof}
    Let \(\sigma=\sigma_{\mathcal{F}}\) and \(Y:=V(\sigma)\subset X_{[n]}\). By definition,
    \[
        g_P(\mathcal{F})=\chi(\mathcal{L}_P|_Y).
    \]
    We need to show that the polytope corresponding to \(\mathcal{L}_P|_Y\) is \(\face_\mathcal{F}(P)\) up to a translation.

    Let \(D_P\) be the torus-invariant Cartier divisor corresponding to \(P\), so \(\mathcal{L}_P=\mathcal{O}_{X_{[n]}}(D_P)\). Let \(h_P|_\sigma\) be the restriction of \(h_P\) to \(\sigma\). Since \(h_P|_\sigma\) is linear, we can choose \(h_{P,\sigma}^\ast\in M\) such that
    \[
        h_P(u)=\langle h_{P,\sigma}^\ast,u\rangle \quad \text{for all }u\in \sigma.
    \]
    (Here we use \(M=\Hom(N,\Z)\) to identify integral linear functions with lattice elements.) The character lattice of the dense torus of \(Y=V(\sigma)\) is
    \[
        M(\sigma)=M\cap \sigma^\perp,
    \]
    and the fan of \(Y\) is the star fan of \(\Sigma_{[n]}\) in \(N(\sigma)_\R:=N_\R/(N_\sigma)_\R\). By \cite[Proposition~6.2.7]{CoxLittleSchenck2011}, the support function of \(D_P|_Y\) is
    \[
        h_P^\sigma(\bar u)=h_P(u)-\langle h_{P,\sigma}^\ast,u\rangle,\qquad \bar u\in N(\sigma)_\R,
    \]
    where \(u\) is a lift of \(\bar u\) chosen in a cone \(\tau\supseteq\sigma\) of \(\Sigma_{[n]}\). We claim that this is exactly the support function of the translated face
    \[
        \face_{\mathcal{F}}(P)-h_{P,\sigma}^\ast \subset M(\sigma)_\R.
    \]

    Let \(F=\face_{\mathcal{F}}(P)\). The support function of the above face is
    \[
        \begin{aligned}
            h_{F-h_{P,\sigma}^\ast}(\bar u) & =\min_{m\in F-h_{P,\sigma}^\ast}\langle m,\bar u \rangle             \\
                                            & =\min_{m\in F}\langle m,u\rangle-\langle h_{P,\sigma}^\ast,u \rangle \\
        \end{aligned}
    \]
    Since \(u\in\tau\), the minimum \(\min_{m\in P}\langle m,u \rangle \) is obtained on the face \(\face_\tau(P)\subset \face_\sigma(P)=F\). Therefore,
    \[
        \min_{m\in F}\langle m,u\rangle-\langle h_{P,\sigma}^\ast,u \rangle=\min_{m\in P}\langle m,u\rangle-\langle h_{P,\sigma}^\ast,u \rangle=h^\sigma_P(\bar u).
    \]

    Hence, by \cite[Proposition~4.3.3]{CoxLittleSchenck2011},
    \[
        h^0(Y,\mathcal{L}_P|_Y)=|(\face_{\mathcal{F}}(P)-h_{P,\sigma}^\ast)\cap M(\sigma)|.
    \]
    Since \(\face_{\mathcal{F}}(P)\subset h_{P,\sigma}^\ast+M(\sigma)_\R\),
    \[
        h^0(Y,\mathcal{L}_P|_Y)=|(\face_{\mathcal{F}}(P)-h_{P,\sigma}^\ast)\cap M|=|\face_{\mathcal{F}}(P)\cap M|.
    \]
    Since \(\mathcal{L}_P\) is nef on \(X_{[n]}\), its restriction to \(Y\) is nef. By Demazure vanishing \cite[Theorem~9.2.1]{CoxLittleSchenck2011} on the complete toric variety \(Y\),
    \[
        H^i(Y,\mathcal{L}_P|_Y)=0\quad (i>0).
    \]
    Therefore
    \[
        g_P(\mathcal{F})=\chi(\mathcal{L}_P|_Y)=h^0(Y,\mathcal{L}_P|_Y)=|\face_{\mathcal{F}}(P)\cap M|.
    \]
\end{proof}

\begin{corollary}\label{cor:simplicial-generator-gw}
    Let \(\emptyset\neq I\subseteq [n]\), \(i_0=\min(I)\), and set
    \[
        \Delta_I:=\operatorname{conv}\!\bigl(\{e_i-e_{i_0}:i\in I\}\bigr)\subset M_\R.
    \]
    Then \(\Delta_I\) is a generalized permutohedron. For a flag \(\mathcal{F}:\emptyset=F_0 \subsetneq F_1\subsetneq\cdots\subsetneq F_r\subsetneq F_{r+1} = [n]\),
    let \(t_I(\mathcal{F})\) be the smallest index \(t\) such that \(I\subset F_t\). Then
    \[
        g_{\Delta_I}(\mathcal{F})=|I\cap (F_{t_I(\mathcal{F})}\backslash F_{t_I(\mathcal{F})-1})|.
    \]
\end{corollary}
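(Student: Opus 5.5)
The plan is to apply \cref{lem:gw-generalized-permutohedra}, so the work splits into two parts. First, show that \(\Delta_I\) is a generalized permutohedron. Second, compute \(\face_{\mathcal{F}}(\Delta_I)\) and count its lattice points.

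\textbf{Generalized permutohedron.} Write \(\Delta_I\) as a translate of the standard simplex
\[
    \operatorname{conv}\{e_i : i\in I\} \quad\text{by}\quad -e_{i_0}.
\]
The translation puts it in \(M_\R\), because each \(e_i-e_{i_0}\) has coordinate sum zero. A translation does not change the normal fan, so it suffices to treat \(\operatorname{conv}\{e_i:i\in I\}\). Its support function at \(u=(u_1,\dots,u_n)\) is \(\min_{i\in I}u_i\). This is linear on every cone of \(\Sigma_{[n]}\): on the cone of an ordered set partition the coordinates are weakly ordered, so the minimizing index set is constant on the relative interior. Hence the normal fan coarsens \(\Sigma_{[n]}\). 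This is the standard fact that simplices \(\Delta_I\) are generalized permutohedra (cf.\ \cite{Postnikov2009}).

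\textbf{Identifying the face.} Pick \(u=\sum_{s=1}^r c_s e_{F_s}\) with all \(c_s>0\), which lies in the relative interior of \(\sigma_{\mathcal{F}}\). The coordinate \(u_i=\sum_{s:\, i\in F_s} c_s\) is strictly decreasing in the block index \(t\) with \(i\in F_t\setminus F_{t-1}\). Therefore \(\langle e_i-e_{i_0},u\rangle\), for \(i\in I\), is minimized exactly at those \(i\in I\) lying in the block of largest index meeting \(I\). That block is \(F_{t}\setminus F_{t-1}\) with \(t=t_I(\mathcal{F})\), the smallest \(t\) with \(I\subseteq F_t\). It follows that
\[
    \face_{\mathcal{F}}(\Delta_I)=\operatorname{conv}\{e_i-e_{i_0}: i\in I\cap(F_{t}\setminus F_{t-1})\}.
\]
One must check that the pairing is well defined on \(N\). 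Indeed \(\langle e_i-e_{i_0},\mathbf 1\rangle=0\), so the choice of representative does not matter.

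\textbf{Counting lattice points.} The face is a unimodular simplex whose vertices are distinct lattice points \(e_i-e_{i_0}\). A point of \(\operatorname{conv}\{e_i\}\) has nonnegative coordinates summing to \(1\), so its only lattice points are the vertices. Hence the face has exactly \(|I\cap(F_t\setminus F_{t-1})|\) lattice points, and \cref{lem:gw-generalized-permutohedra} gives the formula.

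The main care point is sign conventions. With the min-convention \(h_P(u)=\min\langle m,u\rangle\), one must confirm that the face picks the block with the \emph{smallest} \(u\)-coordinates, which is the last block meeting \(I\). That is what the stated formula says, and a quick check with \(n=2\) confirms the direction. The edge case \(t=r+1\) (that is, \(I\not\subseteq F_r\)) is handled by the convention \(F_{r+1}=[n]\), on which \(u\) is \(0\).
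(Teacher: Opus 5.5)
Your proposal is correct and follows essentially the same route as the paper: pick \(u=\sum_t \lambda_t e_{F_t}\) in the relative interior of \(\sigma_{\mathcal F}\), note that its coordinates are constant on blocks and strictly decreasing in the block index, use this both to get linearity of \(h_{\Delta_I}\) on each cone (hence the coarsening) and to identify \(\face_{\mathcal F}(\Delta_I)\), and then count the lattice points of that unimodular simplex via \cref{lem:gw-generalized-permutohedra}. The only cosmetic difference is that you first translate to \(\operatorname{conv}\{e_i : i\in I\}\) before checking linearity, whereas the paper works directly with \(\min_{i\in I}(u_i-u_{i_0})\).
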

\begin{proof}
    Write \(u=\sum_{t=1}^r\lambda_t e_{F_t}\in\sigma^\circ_{\mathcal{F}}\) with \(\lambda_t>0\). For \(i\in F_t\setminus F_{t-1}\),
    \[
        u_i:=\sum_{s\ge t}\lambda_s
    \]
    gives a representative of \(u\) in \(\R^n\) modulo the line \(\R(1,\dots,1)\). The values \(u_i\) are constant within each block and strictly decreasing as \(t\) increases. Therefore
    \[
        h_{\Delta_I}(u)=\min_{i\in I}(u_i-u_{i_0})
    \]
    is linear on \(\sigma_{\mathcal F}\), so the normal fan of \(\Delta_I\) coarsens \(\Sigma_{[n]}\). Thus \(\Delta_I\) is a generalized permutohedron. Moreover, for \(u\in\sigma^\circ_{\mathcal F}\), the minimum is achieved exactly by \(i\in I\cap (F_{t_I(\mathcal{F})}\backslash F_{t_I(\mathcal{F})-1})\), giving
    \[
        \face_{\mathcal{F}}(\Delta_I)=\operatorname{conv}\!\bigl(\{e_i-e_{i_0}:i\in I\cap (F_{t_I(\mathcal{F})}\backslash F_{t_I(\mathcal{F})-1})\bigr).
    \]

    Since \(\face_{\mathcal{F}}(\Delta_I)\) is a unimodular simplex, its only lattice points are its vertices, hence \[|\face_{\mathcal{F}}(\Delta_I)\cap M|=|I\cap (F_{t_I(\mathcal{F})}\backslash F_{t_I(\mathcal{F})-1})|.\]
\end{proof}

\section{The Grothendieck ring of matroids and Grothendieck weights}
In this section, we introduce the $K$-ring of matroids and Grothendieck weights on matroidal fans. Throughout this section, $\mathsf{M}$ denotes a \emph{loopless} matroid on $E=[n]$ of rank $r$.

The unaugmented $K$-ring of loopless matroids was introduced in \cite{LARSON2024109554}.
\begin{definition}[{\cite[Theorem~5.2]{LARSON2024109554}}]
    For a loopless matroid $\mathsf{M}$ on $E$, its $K$-ring $K(\mathsf{M})$ is generated by the symbols $x_F$ for each nonempty proper flat $F$ of $\mathsf{M}$, subject to the following relations:
    \begin{itemize}
        \item $x_Fx_G=0$ if $F$ and $G$ are incomparable;
        \item For any two elements $i,j\in E$, we have
        \[
            \prod_{i\not\in F}(1-x_F)=\prod_{j\not\in F}(1-x_F).
        \]
    \end{itemize}
\end{definition}

For each matroid $\mathsf{M}$, one associates a matroidal fan $\Sigma_{\mathsf{M}}$ (see, e.g., \cite[Section~3]{ArdilaKlivans2006}), whose rays are
\[
    \Sigma_{\mathsf{M}}(1)=\{\rho_F\mid F \text{ is a nonempty proper flat}\},
\]
and whose $k$-dimensional cones are labeled by flags of nonempty proper flats
\[\mathcal{F}:\emptyset=F_0\subsetneq F_1\subsetneq\cdots\subsetneq F_k\subsetneq F_{k+1}=E.\]
The number $k$ is called the length of the flag, denoted by $\ell(\mathcal{F})$. We also identify $\Sigma_{\mathsf{M}}$ with the set of flags of nonempty proper flats of $\mathsf{M}$.

By \cite[Theorem~5.2]{LARSON2024109554}, the ring \(K(\mathsf{M})\) is canonically isomorphic to \(K(X_{\Sigma_{\mathsf{M}}})\). Under this identification, the linear relation in the definition may be written as
\[
    \prod_{i\not\in F,j\in F}(1-x_F)=\prod_{j\not\in F,i\in F}(1-x_F),
\]
which is the toric linear relation corresponding to \(e_i-e_j\in M\). We also use the Euler characteristic map \(\chi:K(\mathsf{M})\to\Z\) and the perfect pairing on \(K(\mathsf{M})\) constructed in \cite[Section~1.5]{LARSON2024109554}:
\[
    \langle a,b \rangle = \chi(ab)
\]
for \(a,b\in K(\mathsf{M})\).

For an element $a\in K(\mathsf{M})$, we define the associated Grothendieck weight \(g_a\) as in the case of toric varieties. For a flag of flats $\mathcal{F}$, set
\[
    g_a(\mathcal{F})=\chi(ax_{\mathcal{F}}).
\]
For a flag of flats $\mathcal{F}=\emptyset\subsetneq F_1\subsetneq\cdots\subsetneq F_k \subsetneq E$, we write $x_{\mathcal{F}}=x_{F_1}x_{F_2}\cdots x_{F_k}$, and $x_\emptyset=1$.

\begin{definition}\label{def:GW-on-matroids}
    A map $g:\Sigma_{\mathsf{M}}\to \Z$ is a Grothendieck weight on $\Sigma_{\mathsf{M}}$ if for every linear relation among the $x_{\mathcal{F}}$
    \[
        \sum_{\mathcal{F}} c_{\mathcal{F}} x_{\mathcal{F}} = 0,
    \]
    one has
    \[
        \sum_{\mathcal{F}} c_{\mathcal{F}} g(\mathcal{F})=0.
    \]
\end{definition}

The abelian group of Grothendieck weights on $\Sigma_{\mathsf{M}}$ is denoted by $\GW(\mathsf{M})$.

\begin{proposition}
    For a loopless matroid $\mathsf{M}$ on $E=[n]$, the map defined by
    \[
        \Phi:K(\mathsf{M})\to \GW(\mathsf{M}),\qquad \Phi: a\mapsto g_a
    \]
    is an isomorphism of abelian groups.
\end{proposition}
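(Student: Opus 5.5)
The plan is to copy the proof of \cref{prop:K-bijection-GW} verbatim, replacing the toric inputs by their matroidal analogues from \cite{LARSON2024109554}. The two ingredients needed are: (a) the monomials \(x_{\mathcal{F}}\), as \(\mathcal{F}\) ranges over flags of nonempty proper flats, span \(K(\mathsf{M})\) as an abelian group; and (b) the pairing \(\langle a,b\rangle=\chi(ab)\) on \(K(\mathsf{M})\) is perfect over \(\Z\).

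For (a), I would argue directly from the presentation. Every element of \(K(\mathsf{M})\) is a \(\Z\)-combination of monomials \(\prod_F x_F^{a_F}\). By the first relation, any monomial involving two incomparable flats vanishes, so only monomials supported on a chain of flats survive. To remove powers \(x_F^a\) with \(a\ge2\), I would use the identification \(K(\mathsf{M})\cong K(X_{\Sigma_{\mathsf{M}}})\) from \cite[Theorem~5.2]{LARSON2024109554}. Under it, \(x_F=[\mathcal{O}_{V(\rho_F)}]\), and the self-intersection formula \(x_F^2=x_F(1-[\mathcal{O}(-V(\rho_F))|_{V(\rho_F)}])\) expresses \(x_F^2\) through \(x_F\) times a line bundle. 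Using the linear relation attached to some \(e_i-e_j\) with \(i\in F\not\ni j\), that line bundle can be rewritten in terms of \(x_G\) for flats \(G\) comparable to or incomparable with \(F\). Induction on the total degree then reduces every chain monomial to square-free ones \(x_{\mathcal{F}}\).

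Alternatively, and more cleanly, I would simply cite the spanning statement from \cite{LARSON2024109554}, where \(K(\mathsf{M})\) is shown to be spanned by the classes \(x_{\mathcal{F}}\), as in the toric case via \cref{thm:K_ring_toric}. This square-power reduction is the step I expect to be the main obstacle if done by hand, so I would rely on the reference.

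For (b), \cite[Section~1.5]{LARSON2024109554} establishes that \(\chi\) induces a perfect pairing on \(K(\mathsf{M})\). With (a) and (b) in hand, the rest is formal, exactly as before.

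\textbf{Injectivity.} If \(g_a=0\), then \(\chi(a x_{\mathcal{F}})=0\) for all \(\mathcal{F}\). By (a) this gives \(\chi(ab)=0\) for all \(b\), so \(a=0\) by (b).

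\textbf{Well-definedness.} The map \(a\mapsto g_a\) lands in \(\GW(\mathsf{M})\): if \(\sum c_{\mathcal{F}}x_{\mathcal{F}}=0\), then \(\sum c_{\mathcal{F}}g_a(\mathcal{F})=\chi\bigl(a\sum c_{\mathcal{F}}x_{\mathcal{F}}\bigr)=0\).

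\textbf{Surjectivity.} Given \(g\in\GW(\mathsf{M})\), define \(\lambda_g\bigl(\sum c_{\mathcal{F}}x_{\mathcal{F}}\bigr)=\sum c_{\mathcal{F}}g(\mathcal{F})\). This is well defined because \(g\) kills all relations, and it is defined on all of \(K(\mathsf{M})\) by (a). Perfectness (b) gives \(a\) with \(\lambda_g=\chi(a\,\cdot)\), and evaluating at \(x_{\mathcal{F}}\) yields \(g=g_a\).
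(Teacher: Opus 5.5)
Your proposal is correct and matches the paper, whose proof simply says it is identical to that of \cref{prop:K-bijection-GW}, using the same two inputs you isolate: spanning of $K(\mathsf{M})$ by the classes $x_{\mathcal{F}}$, and perfectness of the Euler pairing from Larson--Li--Payne--Proudfoot. For the spanning step you need not attempt the square-power reduction by hand, since it follows from the toric case through the surjection $K(X_{[n]})\twoheadrightarrow K(\mathsf{M})$ that kills $x_S$ for non-flats $S$; the paper uses this surjection in the proof of \cref{prop:K-balancing-for-matroid}.
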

\begin{proof}
    The proof is identical to that of \cref{prop:K-bijection-GW}.
\end{proof}

We say that a flag of flats \(\mathcal{G}\) is \(\{i,j\}\)-neutral if every flat in \(\mathcal{G}\) either contains both \(i\) and \(j\) or contains neither.

\matroidBalancing*
\begin{proof}
    Let \(J\subset K(X_{[n]})\) be the ideal generated by the classes \(x_S\), where \(S\subsetneq E\) is a nonempty proper subset that is not a flat of \(\mathsf{M}\). Comparing the presentation of \(K(X_{[n]})\) in \cref{thm:K_ring_toric} with the presentation of \(K(\mathsf{M})\) above, we obtain a canonical isomorphism
    \[
        K(X_{[n]})/J\xrightarrow{\sim} K(\mathsf{M}),
    \]
    and we write \(\pi:K(X_{[n]})\twoheadrightarrow K(\mathsf{M})\) for the quotient map.

    We also note that every element of \(J\) is an integral linear combination of the classes \(x_{\mathcal{H}}\), where \(\mathcal{H}\) contains a non-flat subset. Indeed, for such a subset \(S\), let \(i_S:V(\rho_S)\hookrightarrow X_{[n]}\) be the corresponding invariant divisor. Since \(x_S=i_{S*}\mathcal{O}_{V(\rho_S)}\), the projection formula gives
    \[
        x_S\cdot a=i_{S*}i_S^*(a)
    \]
    for every \(a\in K(X_{[n]})\). As \(K(V(\rho_S))\) is additively generated by its invariant subvarieties, the ideal \((x_S)\) is contained in \(\mathrm{span}_\Z\{x_{\mathcal{H}}:S\in\mathcal{H}\}\). The reverse inclusion holds since \(x_{\mathcal{H}}=x_S\cdot x_{\mathcal{H}\setminus\{S\}}\in(x_S)\) for any \(S\in\mathcal{H}\). Summing over all non-flat \(S\) gives the claim.

    Let \(\widetilde{g}:\Sigma_{[n]}\to\Z\) be the zero-extension of \(g\). We claim that \(g\in \GW(\mathsf{M})\) if and only if \(\widetilde{g}\in \GW(\Sigma_{[n]})\). Suppose first that \(\widetilde{g}\in \GW(\Sigma_{[n]})\), and let
    \[
        \sum_{\mathcal{F}} c_{\mathcal{F}}x_{\mathcal{F}}=0
    \]
    be a linear relation in \(K(\mathsf{M})\), where the sum is over flat flags. Viewing the same sum in \(K(X_{[n]})\), it lies in \(\ker(\pi)=J\). Hence, by the previous paragraph, there exist integers \(d_{\mathcal{H}}\) such that
    \[
        \sum_{\mathcal{F}} c_{\mathcal{F}}x_{\mathcal{F}}=\sum_{\mathcal{H}} d_{\mathcal{H}}x_{\mathcal{H}}
    \]
    in \(K(X_{[n]})\), where each \(\mathcal{H}\) contains a non-flat subset. Applying \(\widetilde{g}\) to the resulting relation
    \[
        \sum_{\mathcal{F}} c_{\mathcal{F}}x_{\mathcal{F}}-\sum_{\mathcal{H}} d_{\mathcal{H}}x_{\mathcal{H}}=0
    \]
    gives \(\sum_{\mathcal{F}} c_{\mathcal{F}}g(\mathcal{F})=0\), since \(\widetilde{g}(\mathcal{H})=0\) for every non-flat flag \(\mathcal{H}\). Thus \(g\in\GW(\mathsf{M})\). Conversely, if \(g\in\GW(\mathsf{M})\) and \(\sum_{\mathcal{K}} c_{\mathcal{K}}x_{\mathcal{K}}=0\) in \(K(X_{[n]})\), then applying \(\pi\) gives a relation \(\sum_{\mathcal{F}} c_{\mathcal{F}}x_{\mathcal{F}}=0\) in \(K(\mathsf{M})\), where the sum is over flat flags. Therefore
    \[
        \sum_{\mathcal{K}} c_{\mathcal{K}}\widetilde{g}(\mathcal{K})=\sum_{\mathcal{F}} c_{\mathcal{F}}g(\mathcal{F})=0,
    \]
    so \(\widetilde{g}\in\GW(\Sigma_{[n]})\).

    By \cref{thm:k-balancing-permutohedron}, \(\widetilde{g}\in \GW(\Sigma_{[n]})\) if and only if it satisfies the \(\{i,j\}\)-balancing condition for every \(\{i,j\}\)-neutral flag of subsets \(\mathcal{G}\). If \(\mathcal{G}\) contains a non-flat subset, then so does every refinement \(\mathcal{F}\sref\mathcal{G}\), hence \(\widetilde{g}(\mathcal{F})=0\) and both sides are zero. If \(\mathcal{G}\) is a flag of flats, then \(\widetilde{g}(\mathcal{F})=g(\mathcal{F})\) for flat refinements and \(0\) otherwise, so this is exactly the balancing condition in the statement.
\end{proof}

The element $1\in K(\mathsf{M})$ defines a Grothendieck weight \(\Delta_{\mathsf{M}}:\Sigma_{\mathsf{M}}\to\Z\). It maps every flag $\mathcal{F}$ to $1$ because
\[
    \chi(x_{\mathcal{F}})=1
\]
for every flag of flats \(\mathcal{F}\), by the defining property of the Euler characteristic map in \cite[Section~1.5]{LARSON2024109554}. This gives the following combinatorial identity.

\begin{corollary}
    For a loopless matroid $\mathsf{M}$,
    \[
        \sum_{\mathcal{F}\in\mathbf{S}_{ij}(\emptyset)}(-1)^{\ell(\mathcal{F})} = \sum_{\mathcal{F}\in\mathbf{S}_{ji}(\emptyset)}(-1)^{\ell(\mathcal{F})}.
    \]
\end{corollary}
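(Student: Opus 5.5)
This follows by applying \cref{prop:K-balancing-for-matroid} to the Grothendieck weight \(\Delta_{\mathsf{M}}\) associated with \(1\in K(\mathsf{M})\), taking the neutral flag to be the empty flag \(\mathcal{G}=\emptyset\).

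First, we check that \(\mathcal{G}=\emptyset\) is admissible. It contains no flats, so the condition that every flat of \(\mathcal{G}\) contains both or neither of \(i,j\) holds vacuously. Thus \(\emptyset\) is \(\{i,j\}\)-neutral for every pair \(i\ne j\).

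Next, we note that \(\Delta_{\mathsf{M}}\) is a Grothendieck weight. Indeed, \(\Delta_{\mathsf{M}}=\Phi(1)=g_1\), and by the preceding proposition \(\Phi\) maps \(K(\mathsf{M})\) into \(\GW(\mathsf{M})\). Its values are \(\Delta_{\mathsf{M}}(\mathcal{F})=\chi(x_{\mathcal{F}})=1\) for every flag of flats \(\mathcal{F}\), by the property of \(\chi\) from \cite[Section~1.5]{LARSON2024109554} recalled above.

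Applying the ``only if'' direction of \cref{prop:K-balancing-for-matroid} to \(g=\Delta_{\mathsf{M}}\) and \(\mathcal{G}=\emptyset\) gives
\[
    \sum_{\mathcal{F}\in\mathbf{S}_{ij}(\emptyset)}(-1)^{\ell(\mathcal{F})}\cdot 1=\sum_{\mathcal{F}\in\mathbf{S}_{ji}(\emptyset)}(-1)^{\ell(\mathcal{F})}\cdot 1.
\]
This is exactly the claimed identity. Here \(\mathbf{S}_{ij}(\emptyset)\) consists of the nonempty flags of nonempty proper flats in which every flat contains \(i\) but not \(j\).

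There is no real obstacle. The only point needing care is that the ``only if'' direction is genuinely available: it holds because each balancing identity in \cref{prop:K-balancing-for-matroid} comes from a linear relation among the \(x_{\mathcal{F}}\) in \(K(\mathsf{M})\), namely the image of \(x_\emptyset R_{e_i-e_j}\). Every Grothendieck weight annihilates such a relation by \cref{def:GW-on-matroids}.
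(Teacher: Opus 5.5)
Your proposal is correct and is essentially the paper's own argument: the corollary comes from applying the balancing condition of \cref{prop:K-balancing-for-matroid}, with the vacuously \(\{i,j\}\)-neutral empty flag \(\mathcal{G}=\emptyset\), to the Grothendieck weight \(\Delta_{\mathsf{M}}=\Phi(1)\), which is identically \(1\) because \(\chi(x_{\mathcal{F}})=1\). Your remark that only the easy ``only if'' direction is needed is also accurate, since that direction is just the expanded relation \(R_{e_i-e_j}=0\) in \(K(\mathsf{M})\).
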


The groups of Grothendieck weights $\GW(\mathsf{M})$ and $\GW(\Sigma_{[n]})$ are related by zero-extension.
\begin{proposition}
    The zero-extension map $\iota_*:\GW(\mathsf{M})\to \GW(\Sigma_{[n]})$ defined by $\iota_*(g)=\tilde{g}$, where
    \[
        \tilde{g}(\mathcal{F})=\begin{cases}
            g(\mathcal{F}),& \text{ if }\mathcal{F}\text{ is a flag of nonempty proper flats},\\
            0,& \text{ otherwise}
        \end{cases}
    \]
    is well-defined. Here $\mathcal{F}$ is a flag of nonempty proper subsets.
\end{proposition}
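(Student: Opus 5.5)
This is essentially already contained in the proof of \cref{prop:K-balancing-for-matroid}. There we showed that for any \(g:\Sigma_{\mathsf{M}}\to\Z\), \(g\in\GW(\mathsf{M})\) if and only if its zero-extension \(\widetilde g\in\GW(\Sigma_{[n]})\). The plan is to isolate the direction \(g\in\GW(\mathsf{M})\Rightarrow\widetilde g\in\GW(\Sigma_{[n]})\). I will rerun that argument self-containedly, since it is what well-definedness of \(\iota_*\) means.

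The proof uses the quotient map \(\pi:K(X_{[n]})\twoheadrightarrow K(\mathsf{M})\) with kernel \(J=(x_S: S\text{ not a flat})\). Concretely, \(\pi\) sends \(x_S\mapsto x_S\) when \(S\) is a flat and \(x_S\mapsto 0\) otherwise. Hence \(\pi(x_{\mathcal K})=x_{\mathcal K}\) if \(\mathcal K\) is a flag of flats, and \(\pi(x_{\mathcal K})=0\) if \(\mathcal K\) contains a non-flat.

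The steps are as follows.
\begin{enumerate}
    \item Start from a relation \(\sum_{\mathcal K}c_{\mathcal K}x_{\mathcal K}=0\) in \(K(X_{[n]})\).
    \item Apply the ring homomorphism \(\pi\). Every non-flat flag is killed, which leaves \(\sum_{\mathcal F\text{ flat}}c_{\mathcal F}x_{\mathcal F}=0\) in \(K(\mathsf{M})\).
    \item Since \(g\in\GW(\mathsf{M})\), \cref{def:GW-on-matroids} gives \(\sum_{\mathcal F}c_{\mathcal F}g(\mathcal F)=0\).
    \item This sum equals \(\sum_{\mathcal K}c_{\mathcal K}\widetilde g(\mathcal K)\), because \(\widetilde g\) vanishes on non-flat flags. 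Hence \(\widetilde g\in\GW(\Sigma_{[n]})\).
    \item Linearity of \(\iota_*\) is immediate from the definition of zero-extension.
\end{enumerate}

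The only point needing care is that \(\pi\) is a well-defined ring map with the stated effect on the \(x_S\). This comes from comparing the presentation in \cref{thm:K_ring_toric} with that of \(K(\mathsf{M})\), as in the proof of \cref{prop:K-balancing-for-matroid}.

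Setting \(x_S=0\) for non-flats turns the toric linear relation for \(e_i-e_j\) into exactly the matroid relation. Setting \(x_S=0\) for non-flats also sends Stanley–Reisner monomials on incomparable sets either to \(0\) or to products \(x_Fx_G\) of incomparable flats, which also vanish in \(K(\mathsf{M})\).

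I expect no genuine obstacle. One alternative would be to argue via \cref{thm:k-balancing-permutohedron}: check the balancing condition for \(\widetilde g\) directly, using that a refinement of a flag containing a non-flat again contains that non-flat. I will not take this route, since the direct argument above already suffices.
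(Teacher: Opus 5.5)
Your proposal is correct and follows the paper's route: the paper's proof just cites \cref{prop:K-balancing-for-matroid}, and the converse direction in that proof is exactly your argument (apply $\pi$ to a relation in $K(X_{[n]})$, note that the non-flat flags are killed, and use the definition of $\GW(\mathsf{M})$). One small tidy-up: checking only the $e_i-e_j$ relations does not cover every generator of $I_{\mathrm{lin}}$, but this costs nothing, because $\pi$ is restriction to the open subvariety $X_{\Sigma_{\mathsf{M}}}\subseteq X_{[n]}$ followed by the isomorphism $K(X_{\Sigma_{\mathsf{M}}})\cong K(\mathsf{M})$, and $V(\rho_S)\cap X_{\Sigma_{\mathsf{M}}}=\emptyset$ whenever $S$ is not a flat.
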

\begin{proof}
    This is immediate from \cref{prop:K-balancing-for-matroid}.
\end{proof}

We also need the following geometric interpretation of $K(\mathsf{M})$ in the realizable case.

Assume that \(\mathsf{M}\) is realizable over \(\C\), and let \(L\subset \C^E\) be a linear subspace that realizes \(\mathsf{M}\). Set
\[
    U_L=\PP(L\cap (\C^*)^E),
\]
and let \(W_L\) be the closure of \(U_L\) in \(X_{[n]}\). This is the wonderful compactification associated with the realization \(L\).

\begin{proposition}[{\cite[Proposition~1.6]{LARSON2024109554}}]\label{prop:K-matroid-wonderful}
    Let \(\mathsf{M}\) be a loopless realizable matroid, and let \(L\subset \C^E\) be a realization of \(\mathsf{M}\). Then \(W_L\) is contained in \(X_{\Sigma_{\mathsf{M}}}\), and the natural inclusion
    \[
        W_L\hookrightarrow X_{\Sigma_{\mathsf{M}}}
    \]
    induces an isomorphism
    \[
        K(X_{\Sigma_{\mathsf{M}}})\xrightarrow{\sim} K(W_L).
    \]
    Consequently, combining this with \cite[Theorem~5.2]{LARSON2024109554}, there is a canonical isomorphism
    \[
        K(\mathsf{M})\xrightarrow{\sim} K(W_L).
    \]
\end{proposition}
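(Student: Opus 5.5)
The plan has two parts: first show that \(W_L\) lies in the open toric subvariety \(X_{\Sigma_{\mathsf{M}}}\subset X_{[n]}\), then compare the two \(K\)-rings through the restriction \(i^*\colon K(X_{\Sigma_{\mathsf{M}}})\to K(W_L)\) by passing to Chow groups.

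\emph{Containment.} For the containment I would use tropical geometry. The tropicalization of \(U_L\subset (\C^*)^E/\C^*\) is the Bergman fan of \(\mathsf{M}\), whose support is \(|\Sigma_{\mathsf{M}}|\). By Tevelev's criterion, the closure of \(U_L\) in \(X_{[n]}\) meets the orbit \(O(\sigma)\) if and only if \(\sigma^\circ\) meets \(\operatorname{trop}(U_L)\). A cone \(\sigma_{\mathcal{F}}\) of \(\Sigma_{[n]}\) whose flag contains a non-flat has relative interior disjoint from \(|\Sigma_{\mathsf{M}}|\). Hence \(W_L\) misses all orbits outside \(X_{\Sigma_{\mathsf{M}}}\). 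Moreover, \(W_L\) is a tropical (indeed schön) compactification, so \(W_L\cap V(\sigma_{\mathcal{F}})\) is, up to isomorphism, the product of the wonderful compactifications of the minors \(\mathsf{M}|F_1,\ \mathsf{M}|F_2/F_1,\dots\). In particular each such intersection is smooth, nonempty, irreducible, and transverse.

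\emph{Surjectivity.} Let \(i\colon W_L\hookrightarrow X_{\Sigma_{\mathsf{M}}}\) be the inclusion. Both varieties are smooth, so \(i^*\) exists and is compatible with the topological filtrations. Let \(\phi_Y\colon A^k(Y)\twoheadrightarrow \gr^k K(Y)\) denote the natural surjection \([V]\mapsto[\mathcal{O}_V]\). Transversality of the boundary strata gives \(i^*x_{\mathcal{F}}=[\mathcal{O}_{W_L\cap V(\sigma_{\mathcal{F}})}]\), and therefore
\[
    \gr(i^*)\circ\phi_{X_{\Sigma_{\mathsf{M}}}}=\phi_{W_L}\circ i^*_A,
\]
where \(i^*_A\) is the Chow restriction. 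By Feichtner--Yuzvinsky and de Concini--Procesi, \(i^*_A\colon A^*(X_{\Sigma_{\mathsf{M}}})\cong A^*(\Sigma_{\mathsf{M}})\to A^*(W_L)\) is an isomorphism of rings over \(\Z\). It follows that \(\gr(i^*)\) is surjective. Since the filtrations are finite, \(i^*\) is surjective.

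\emph{Injectivity.} This is the step I expect to be the main obstacle. My approach is to reduce it to a rank count plus torsion-freeness. The wonderful compactification \(W_L\) is an iterated blowup of \(\PP^{r-1}\) along smooth centers, namely the strict transforms of the linear subspaces attached to flats. By the blowup formula, \(K(W_L)\) is therefore a free abelian group. Its rank equals \(\operatorname{rank}A^*(W_L)\), because the Chern character is a rational isomorphism. The source \(K(X_{\Sigma_{\mathsf{M}}})\cong K(\mathsf{M})\) is torsion-free with rank \(\operatorname{rank}A^*(\Sigma_{\mathsf{M}})\); I would either quote the basis of \(K(\mathsf{M})\) from \cite[Theorem~5.2]{LARSON2024109554} or argue via the perfect Euler pairing. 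A surjection between free abelian groups of the same finite rank is an isomorphism, which gives injectivity. If torsion-freeness of the source is not available directly, I would instead compare \(\chi\)-pairings: \(\chi_{W_L}(i^*a\cdot i^*b)\) can be computed on \(X_{\Sigma_{\mathsf{M}}}\) from the orbit-closure classes, and unimodularity of this pairing on the \(x_{\mathcal{F}}\) would force \(\ker i^*=0\).

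Finally, composing \(i^*\) with the isomorphism \(K(\mathsf{M})\cong K(X_{\Sigma_{\mathsf{M}}})\) of \cite[Theorem~5.2]{LARSON2024109554} gives the canonical isomorphism \(K(\mathsf{M})\xrightarrow{\sim}K(W_L)\).
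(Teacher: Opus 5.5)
The paper gives no argument of its own here: it imports the statement from \cite[Proposition~1.6]{LARSON2024109554} and combines it with \cite[Theorem~5.2]{LARSON2024109554}. Your proposal is therefore a self-contained reconstruction, and it is sound. Compared with the bare citation, it makes the inputs explicit: Tevelev's lemma, the Feichtner--Yuzvinsky isomorphism of Chow rings, and the rational Riemann--Roch isomorphism.

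\begin{itemize}
\item \emph{Containment and surjectivity.} Tevelev's lemma together with \(\operatorname{trop}(U_L)=|\Sigma_{\mathsf{M}}|\) gives the containment. Transversality gives \(i^*x_{\mathcal{F}}=[\mathcal{O}_{W_L\cap V(\sigma_{\mathcal{F}})}]\). The Feichtner--Yuzvinsky isomorphism and the surjections \(\phi\) then give surjectivity of \(\gr(i^*)\), hence of \(i^*\).
\item \emph{Filtration compatibility.} You need not quote this as a general fact about smooth varieties. Since \(A^k(X_{\Sigma_{\mathsf{M}}})\) is spanned by orbit closures and \(\phi\) is onto, descending induction shows that \(F^kK(X_{\Sigma_{\mathsf{M}}})\) is spanned by the \(x_{\mathcal{F}}\) with \(\ell(\mathcal{F})\ge k\). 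These visibly land in \(F^kK(W_L)\).
\item \emph{Injectivity.} The step you flag as the main obstacle is already settled by the diagram you set up. The kernel of \(\phi_Y\colon A^k(Y)\to\gr^kK(Y)\) is always torsion, because \(\phi_Y\otimes\Q\) is an isomorphism. Since \(A^*(X_{\Sigma_{\mathsf{M}}})\cong A^*(W_L)\) is a free abelian group, both \(\phi\)'s are isomorphisms. Hence \(\gr(i^*)\) is an isomorphism in every degree. A filtered map of finitely filtered groups that is an isomorphism on the associated graded is itself an isomorphism.
\end{itemize}

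This gives injectivity directly, and freeness of \(K(X_{\Sigma_{\mathsf{M}}})\) as a by-product. So the rank count, the blowup formula, and any appeal to a basis of \(K(\mathsf{M})\) become unnecessary.

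You should drop the Euler-pairing fallback. The \(x_{\mathcal{F}}\) are linearly dependent, so ``unimodularity of the pairing on the \(x_{\mathcal{F}}\)'' has no meaning. Moreover, nondegeneracy of \((a,b)\mapsto\chi_{W_L}(i^*a\cdot i^*b)\) on \(K(X_{\Sigma_{\mathsf{M}}})\) is essentially the injectivity you are trying to prove.
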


In particular, in the realizable case we may freely regard an element of \(K(\mathsf{M})\) as a class on the wonderful model \(W_L\), and conversely.

\section{Motivic Chern classes of matroids}
Throughout this section, we assume $\mathsf{M}$ is a loopless matroid on $E=[n]$ of rank $r$. We first assume $\mathsf{M}$ is realizable over $\C$. Let $L\subset \C^E$ be a linear subspace that realizes $\mathsf{M}$. Set
\[
    U_L=\PP(L\cap (\C^*)^E),
\]
and let $W_L$ be the closure of \(U_L\) in $X_{[n]}$. Our goal is to give a combinatorial description of the motivic Chern class of $\mathsf{M}$, which we define below after reviewing the necessary background.

\subsection{Motivic Chern classes}
Following \cite{BrasseletSchurmannYokura2010}, for a complex algebraic variety \(X\) let \(K_0(\mathrm{var}/X)\) be the relative Grothendieck group of algebraic varieties over \(X\). It is generated by isomorphism classes \([f:Y\to X]\), subject to the additivity relation
\[
    [Y\to X]=[Z\to X]+[Y\setminus Z\to X]
\]
for every closed subvariety \(Z\subseteq Y\). A proper morphism \(p:X\to X'\) induces a pushforward
\[
    p_*:K_0(\mathrm{var}/X)\to K_0(\mathrm{var}/X'),
    \qquad
    [f:Y\to X]\mapsto [p\circ f:Y\to X'].
\]

Brasselet, Sch\"urmann, and Yokura constructed a natural transformation
\[
    mC_y:K_0(\mathrm{var}/X)\longrightarrow G_0(X)[y]
\]
commuting with proper pushforward and characterized by the normalization
\[
    mC_y([\mathrm{id}_X])=\lambda_y[\Omega_X]
\]
for smooth \(X\). Here \(G_0(X)\) denotes the Grothendieck group of coherent sheaves on \(X\). By the convention in \cref{subsec:toric-k-ring}, the targets used in this paper have \(G_0(X)\simeq K(X)\), and we write
\[
    \MC_y(Y\to X)\coloneq mC_y([Y\to X])\in K(X)[y].
\]
With this convention, additivity in \(K_0(\mathrm{var}/X)\) becomes
\[
    \MC_y(Y\to X)=\MC_y(Z\to X)+\MC_y(Y\setminus Z\to X),
\]
and proper functoriality becomes
\[
    p_*\MC_y(Y\to X)=\MC_y(Y\to X').
\]
In particular, if \(X\) is smooth, then
\[
    \MC_y(X\xrightarrow{\mathrm{id}} X)=\lambda_y[\Omega_X].
\]

To compare with classical characteristic classes, one usually passes from motivic Chern classes to the associated Hirzebruch class transformation
\[
    T_{y*}:=td_{(1+y)}\circ \MC_y:K_0(\mathrm{var}/X)\longrightarrow H_*(X)\otimes \Q[y],
\]
where \(td_{(1+y)}\) denotes the normalized Baum--Fulton--MacPherson Todd transformation; see \cite{BrasseletSchurmannYokura2010}. The specializations at \(y=-1,0,1\) recover the classical singular characteristic classes:
\[
    T_{-1,*}=c_{\mathrm{SM}}\otimes \Q,\qquad
    T_{0,*}=td_*,\qquad
    T_{1,*}=L_*.
\]
In particular, the relation between motivic Chern classes and the Chern--Schwartz--MacPherson class is mediated by \(T_{y*}\): one first applies \(td_{(1+y)}\) to \(\MC_y\), and then specializes at \(y=-1\).

\begin{definition}\label{def:motivic-Chern-realizable}
    The \emph{motivic Chern class} of a matroid $\mathsf{M}$ realizable over $\C$ is defined to be
    \[
        \MC_y(\mathsf{M})\coloneq \MC_y(U_L\to W_L)\in K(W_L)[y],
    \]
    where $L$ is any subspace in $\C^E$ realizing $\mathsf{M}$.
\end{definition}

The computations below determine the Grothendieck weight associated with \(\mathbb{D}_{W_L}(\MC_y(\mathsf{M}))\). The independence of the choice of realization \(L\) is explained at \cref{rmk:independence-of-L}.

\subsection{Motivic Chern class of the complement of an SNC divisor}

The boundary $W_L\setminus U_L$ is an SNC (simple normal crossing) divisor. We need the following result on the motivic Chern class of the complement of an SNC divisor.

\begin{theorem}[{\cite[Theorem~5.1 and Remark~5.2]{Weber2016}}]
    \label{thm:MCy-SNC-U}
    Let $X$ be smooth, $D=\bigcup_i D_i$ an SNC divisor, and $U=X\setminus D$. Then
    \begin{equation}
        \label{eq:MCy-log-formula}
        \MC_y(U\hookrightarrow X)
        \;=\;
        \lambda_y [\Omega_X(\log D)] \cdot \mathcal{O}_X(-D)
        \quad\in K(X)[y],
    \end{equation}
    where $\Omega_X(\log D)$ denotes the sheaf of logarithmic $1$-forms with poles along $D$.
\end{theorem}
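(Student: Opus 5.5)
The plan is to argue by induction on the number of components of \(D=D_1\cup\cdots\cup D_k\), using only additivity of \(\MC_y\), compatibility with proper pushforward, and the normalization \(\MC_y(\mathrm{id}_X)=\lambda_y[\Omega_X]\). When \(k=0\) we have \(D=\emptyset\), \(\Omega_X(\log D)=\Omega_X\) and \(\mathcal{O}_X(-D)=\mathcal{O}_X\), so \cref{eq:MCy-log-formula} is just the normalization.

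For the inductive step, set \(D'=D_1\cup\cdots\cup D_{k-1}\) and \(U'=X\setminus D'\). Then \(U=U'\setminus (U'\cap D_k)\). Write \(i\colon D_k\hookrightarrow X\) for the inclusion. Since \(D\) is SNC, \(D_k\) is smooth and \(D'|_{D_k}=\bigcup_{j<k}(D_j\cap D_k)\) is an SNC divisor on \(D_k\) with \(k-1\) components. Its complement is \(U'\cap D_k\).

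Additivity and proper pushforward along \(i\) give
\[
  \MC_y(U\hookrightarrow X)=\MC_y(U'\hookrightarrow X)-i_*\MC_y(U'\cap D_k\hookrightarrow D_k).
\]
The induction hypothesis applies on \(X\) to \(D'\), and on \(D_k\) to \(D'|_{D_k}\). After this, it remains to prove the identity
\[
  \lambda_y[\Omega_X(\log D)]\cdot[\mathcal{O}_X(-D')\otimes\mathcal{O}_X(-D_k)]
  =\lambda_y[\Omega_X(\log D')]\cdot[\mathcal{O}_X(-D')]
  -i_*\bigl(\lambda_y[\Omega_{D_k}(\log D'|_{D_k})]\cdot[\mathcal{O}_{D_k}(-D'|_{D_k})]\bigr)
\]
in \(K(X)[y]\).

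To get this identity I would use, for each \(p\ge0\), the short exact sequence
\[
  0\to\Omega^p_X(\log D)\otimes\mathcal{O}_X(-D_k)\to\Omega^p_X(\log D')\to i_*\Omega^p_{D_k}(\log D'|_{D_k})\to 0 .
\]
Here the second map is the restriction of log forms. Tensor this sequence with the line bundle \(\mathcal{O}_X(-D')\). The projection formula gives \(i_*(\,\cdot\,)\otimes\mathcal{O}_X(-D')=i_*(\,\cdot\,\otimes\mathcal{O}_{D_k}(-D'|_{D_k}))\). Multiplying by \(y^p\) and summing over \(p\) then yields exactly the required identity, because \(\mathcal{O}_X(-D)=\mathcal{O}_X(-D')\otimes\mathcal{O}_X(-D_k)\).

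The main obstacle is verifying exactness of this sequence, which is a local computation. Choose coordinates \(z_1,\dots,z_m\) with \(D_k=\{z_1=0\}\) and \(D'=\{z_2\cdots z_s=0\}\). Then \(\Omega^p_X(\log D')\) is free on wedges of \(dz_1\), \(dz_j/z_j\) for \(2\le j\le s\), and \(dz_j\) for \(j>s\). The sheaf \(\Omega^p_X(\log D)\) is the same with \(dz_1\) replaced by \(dz_1/z_1\). Multiplying by \(z_1\) shows that \(\Omega^p_X(\log D)(-D_k)\) is the subsheaf generated by the wedges containing \(dz_1\), together with \(z_1\) times the wedges without \(dz_1\). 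This is precisely the kernel of restriction to \(\{z_1=0\}\), which kills \(dz_1\) and sends the remaining generators to a basis of \(\Omega^p_{D_k}(\log D'|_{D_k})\). Surjectivity is clear from the same description. Once this local check is in place the induction closes and \cref{eq:MCy-log-formula} follows.
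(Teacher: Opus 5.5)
The paper gives no proof of this statement; it imports the formula from Weber, so there is no internal argument to compare against. Your proposal is a correct, self-contained proof that uses only the three defining properties of \(\MC_y\). The additivity/pushforward step, the reduction to an identity in \(K(X)[y]\), and the local verification of
\[
0\to\Omega^p_X(\log D)\otimes\mathcal{O}_X(-D_k)\to\Omega^p_X(\log D')\to i_*\Omega^p_{D_k}(\log D'|_{D_k})\to 0
\]
are all right. The projection-formula step is also legitimate: transversality gives \(D_k\not\subseteq D'\), hence \(i^*\mathcal{O}_X(-D')\cong\mathcal{O}_{D_k}(-D'|_{D_k})\).

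One bookkeeping point needs care. \(D_j\cap D_k\) can be empty or disconnected, so \(D'|_{D_k}\) need not have exactly \(k-1\) irreducible components. There are two easy fixes. You can phrase the induction for SNC divisors whose components are smooth but possibly disconnected; this is harmless because both sides of the formula depend only on the reduced divisor \(D\). Alternatively, add an outer induction on \(\dim X\), since the second use of the hypothesis is on the lower-dimensional \(D_k\).

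For comparison, the usual non-inductive derivation goes through the closed strata. Additivity gives \(\MC_y(U\hookrightarrow X)=\sum_{I}(-1)^{|I|}i_{I*}\lambda_y[\Omega_{D_I}]\). One then invokes exactness, for each \(p\), of the full complex \(0\to\Omega^p_X(\log D)(-D)\to\Omega^p_X\to\bigoplus_i\Omega^p_{D_i}\to\bigoplus_{i<j}\Omega^p_{D_i\cap D_j}\to\cdots\).

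Your induction is a telescoped version of this. Unwinding your recursion reproduces the same alternating sum, but each step only needs exactness of a three-term sequence, which is an immediate coordinate check. The stratum route gives the explicit alternating-sum expression in one stroke and is the form that generalizes to Hodge-theoretic treatments. Its cost is a longer exactness argument for the whole complex.
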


\begin{proposition}
    \label{prop:MCy-SNC-pairing}
    Let $X$ be a smooth proper variety, $D=\bigcup_{i=1}^m D_i$ an SNC divisor, and $U=X\setminus D$.
    For each $I\subseteq\{1,\ldots,m\}$, set $D_I=\bigcap_{i\in I}D_i$, let $i_I:D_I\hookrightarrow X$ be the closed embedding, let $D_I^\circ=D_I\setminus\bigcup_{j\notin I}D_j$, and let $N_{D_I/X}$ denote the normal bundle of $D_I$ in $X$. Then
    \begin{equation}
        \label{eq:MCy-pairing}
        \chi\bigl(\MC_y(U\to X)\cdot i_{I*}\det N_{D_I/X}\bigr)=(1+y)^{|I|}\chi_y(D_I^\circ),
    \end{equation}
    where $\chi_y$ denotes the Hirzebruch $\chi_y$-genus.
\end{proposition}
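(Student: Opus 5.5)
The plan is to reduce the pairing to a computation on \(D_I\) with the projection formula, and then use \cref{thm:MCy-SNC-U} twice: once on \(X\) and once on \(D_I\).

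First, the projection formula for the closed embedding \(i_I\) gives
\[
    \chi\bigl(\MC_y(U\to X)\cdot i_{I*}\det N_{D_I/X}\bigr)
    =\chi\bigl(D_I,\ i_I^*\MC_y(U\to X)\otimes \det N_{D_I/X}\bigr).
\]
By \cref{thm:MCy-SNC-U}, \(i_I^*\MC_y(U\to X)=\lambda_y[\Omega_X(\log D)|_{D_I}]\cdot \mathcal{O}_X(-D)|_{D_I}\).

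Since \(D\) is SNC, \(D_I\) is a transversal intersection of the \(D_i\), \(i\in I\). Hence
\[
    N_{D_I/X}\cong\bigoplus_{i\in I}\mathcal{O}_X(D_i)|_{D_I},
    \qquad
    \det N_{D_I/X}\cong\mathcal{O}_X\Bigl(\sum_{i\in I}D_i\Bigr)\Big|_{D_I}.
\]
Write \(D'_I:=\bigcup_{j\notin I}(D_j\cap D_I)\). This is an SNC divisor on the smooth proper variety \(D_I\), with complement \(D_I^\circ\). Splitting \(\mathcal{O}_X(-D)=\mathcal{O}_X(-\sum_{i\in I}D_i)\otimes\mathcal{O}_X(-\sum_{j\notin I}D_j)\), the factor over \(I\) cancels against \(\det N_{D_I/X}\). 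What remains is \(\mathcal{O}_{D_I}(-D'_I)\).

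The key geometric input, and the step I expect to need the most care, is the residue exact sequence on \(D_I\):
\[
    0\to\Omega_{D_I}(\log D'_I)\to\Omega_X(\log D)|_{D_I}\to\mathcal{O}_{D_I}^{\oplus|I|}\to0.
\]
Here the last map is given by the residues along \(D_i\), \(i\in I\). I would verify exactness in local coordinates \(z_1,\dots,z_d\) adapted to the SNC divisor, with \(D_i=\{z_i=0\}\). The restriction \(\Omega_X(\log D)|_{D_I}\) is free on \(dz_i/z_i\) for \(i\in I\), on \(dz_j/z_j\) for \(j\) indexing components of \(D'_I\), and on the remaining \(dz_k\). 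The kernel of the residue map is then visibly generated by the log forms of \((D_I,D'_I)\).

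Multiplicativity of \(\lambda_y\) on short exact sequences gives
\[
    \lambda_y[\Omega_X(\log D)|_{D_I}]=(1+y)^{|I|}\,\lambda_y[\Omega_{D_I}(\log D'_I)].
\]
Therefore the pairing equals
\[
    (1+y)^{|I|}\,\chi\bigl(D_I,\ \lambda_y[\Omega_{D_I}(\log D'_I)]\cdot\mathcal{O}_{D_I}(-D'_I)\bigr).
\]

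Now apply \cref{thm:MCy-SNC-U} on \(D_I\). The class inside \(\chi\) is \(\MC_y(D_I^\circ\to D_I)\). Pushing forward to a point and using proper functoriality of \(\MC_y\), its Euler characteristic is \(\MC_y(D_I^\circ\to\mathrm{pt})\). By the normalization this equals \(\chi_y(D_I^\circ)\): both sides are additive, and for a smooth proper variety \(Y\) they agree because \(\chi(\lambda_y\Omega_Y)=\chi_y(Y)\).

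When \(I=\emptyset\) the argument degenerates correctly: \(D_I=X\), \(\det N_{D_I/X}=\mathcal{O}_X\), and the statement is just \(\chi(\MC_y(U\to X))=\chi_y(U)\).
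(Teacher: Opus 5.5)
Your proposal is correct and follows the paper's proof essentially step for step. Both arguments use the projection formula, then apply \cref{thm:MCy-SNC-U} on \(X\) and on \(D_I\), split the normal bundle to cancel the line-bundle factors, use the residue exact sequence with multiplicativity of \(\lambda_y\), and finish with \(\chi(\MC_y(V\to Y))=\chi_y(V)\). You additionally spell out the local-coordinate check of the residue sequence and the justification of that last identity via pushforward to a point, both of which the paper takes as standard.
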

\begin{proof}
    By the projection formula,
    \[
        \chi\bigl(\MC_y(U\to X)\cdot i_{I*}\det N_{D_I/X}\bigr)
        =\chi\bigl(i_I^*\MC_y(U\to X)\cdot\det N_{D_I/X}\bigr).
    \]
    We claim that
    \begin{equation}
        \label{eq:claim-MCy-pullback}
        i_I^*\MC_y(U\to X)\cdot\det N_{D_I/X}=(1+y)^{|I|}\MC_y(D_I^\circ\to D_I).
    \end{equation}
    Granting \cref{eq:claim-MCy-pullback}, the proposition follows from the standard identity
    \[
        \chi\bigl(\MC_y(V\to Y)\bigr)=\chi_y(V)
    \]
    applied to $D_I^\circ\hookrightarrow D_I$.

    It remains to prove \cref{eq:claim-MCy-pullback}. Set $D|_{D_I}=\bigcup_{j\notin I}D_j|_{D_I}$, an SNC divisor on $D_I$ with $D_I^\circ=D_I\setminus D|_{D_I}$. Applying \cref{thm:MCy-SNC-U} gives
    \begin{align}
        \MC_y(U\to X)           & = \lambda_y[\Omega_X(\log D)]\cdot\mathcal{O}_X(-D), \label{eq:MCy-U}                        \\
        \MC_y(D_I^\circ\to D_I) & = \lambda_y[\Omega_{D_I}(\log D|_{D_I})]\cdot\mathcal{O}_{D_I}(-D|_{D_I}). \label{eq:MCy-DI}
    \end{align}
    Since $D$ is SNC, $N_{D_I/X}=\bigoplus_{i\in I}\mathcal{O}_X(D_i)|_{D_I}$, so $\det N_{D_I/X}=i_I^*\mathcal{O}_X(\sum_{i\in I}D_i)$. The line bundle factor in \cref{eq:MCy-U} therefore satisfies
    \[
        i_I^*\mathcal{O}_X(-D)\cdot\det N_{D_I/X}
        = i_I^*\mathcal{O}_X\bigl(-\textstyle\sum_{j\notin I}D_j\bigr)
        = \mathcal{O}_{D_I}(-D|_{D_I}).
    \]
    For the logarithmic cotangent bundle, the residue maps $\mathrm{Res}_{D_i}:\Omega_X(\log D)\to \mathcal{O}_{D_i}$ for $i\in I$, restricted to $D_I$, assemble into a short exact sequence
    \[
        0\to\Omega_{D_I}(\log D|_{D_I})\to i_I^*\Omega_X(\log D)
        \xrightarrow{\,\bigoplus_{i\in I}\mathrm{Res}_{D_i}\,}
        \mathcal{O}_{D_I}^{\oplus|I|}\to 0.
    \]
    In $K(D_I)$, this gives $[i_I^*\Omega_X(\log D)]=[\Omega_{D_I}(\log D|_{D_I})]+|I|[\mathcal{O}_{D_I}]$. Since $\lambda_y$ is multiplicative in $K$-theory and $\lambda_y[\mathcal{O}_{D_I}]=1+y$, this gives
    \begin{equation}
        \label{eq:lambda-split}
        \lambda_y[i_I^*\Omega_X(\log D)]=(1+y)^{|I|}\,\lambda_y[\Omega_{D_I}(\log D|_{D_I})].
    \end{equation}
    Combining \cref{eq:MCy-U,eq:lambda-split} and the line bundle computation,
    \[
        i_I^*\MC_y(U\to X)\cdot\det N_{D_I/X}
        =(1+y)^{|I|}\,\lambda_y[\Omega_{D_I}(\log D|_{D_I})]\cdot\mathcal{O}_{D_I}(-D|_{D_I}).
    \]
    By \cref{eq:MCy-DI}, the right-hand side equals $(1+y)^{|I|}\MC_y(D_I^\circ\to D_I)$, proving \cref{eq:claim-MCy-pullback}.
\end{proof}

By the adjunction formula,
\[
    \omega_{D_I}\cong i_I^*\omega_X\otimes \det N_{D_I/X}.
\]
This allows us to reformulate \cref{prop:MCy-SNC-pairing} in terms of normalized duality.

\begin{corollary}\label{cor:GW-for-MCy}
    Let \(n=\dim X\), and let \(\mathbb{D}_X:K(X)[y]\to K(X)[y]\) be the normalized duality operator defined coefficientwise by
    \[
        \mathbb{D}_X([\mathcal{F}])=[R\sHom(\mathcal{F},\omega_X[n])].
    \]
    Then, for every \(I\subseteq\{1,\ldots,m\}\),
    \[
        \chi\bigl(\mathbb{D}_X(\MC_y(U\to X))\cdot i_{I*}\mathcal{O}_{D_I}\bigr)=(-1-y)^{|I|}\chi_y(D_I^\circ).
    \]
\end{corollary}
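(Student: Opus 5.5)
The plan is to reduce \cref{cor:GW-for-MCy} to \cref{prop:MCy-SNC-pairing} using Grothendieck--Serre duality on the smooth proper variety \(X\). Two ingredients are needed: the adjunction identity \(\omega_{D_I}\cong i_I^*\omega_X\otimes\det N_{D_I/X}\), and the sign with which \(\mathbb{D}\) acts on the variable \(y\).

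The first step rewrites the left-hand side. For classes \(a,b\in K(X)\) on smooth proper \(X\), Serre duality gives
\[
    \chi(\mathbb{D}_X(a)\cdot b)=\chi\bigl(R\sHom(a\otimes b^\vee,\omega_X[n])\bigr)=\chi(a\cdot b^\vee),
\]
since \(\chi(R\sHom(\mathcal{E},\omega_X[n]))=\chi(\mathcal{E})\). The next task is to compute \((i_{I*}\mathcal{O}_{D_I})^\vee\). For a smooth closed subvariety \(i\colon Z\hookrightarrow X\) of codimension \(c\), Grothendieck duality for the closed embedding gives \(R\sHom(i_*\mathcal{O}_Z,\mathcal{O}_X)=i_*\det N_{Z/X}[-c]\). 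Hence in \(K(X)\) we have
\[
    (i_{I*}\mathcal{O}_{D_I})^\vee=(-1)^{|I|}\,i_{I*}\det N_{D_I/X},
\]
because \(\operatorname{codim}D_I=|I|\) when \(D\) is SNC. One could instead phrase this via adjunction, \(\omega_{D_I}\otimes i_I^*\omega_X^{-1}=\det N\), but the local computation with the Koszul complex is the cleanest route.

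The second step handles the variable \(y\). The operator \(\mathbb{D}_X\) acts coefficientwise, so \(\mathbb{D}_X(\sum a_k y^k)=\sum \mathbb{D}_X(a_k)y^k\). Combining this with the first step gives
\[
    \chi\bigl(\mathbb{D}_X(\MC_y)\cdot i_{I*}\mathcal{O}_{D_I}\bigr)=(-1)^{|I|}\chi\bigl(\MC_y\cdot i_{I*}\det N_{D_I/X}\bigr).
\]
By \cref{prop:MCy-SNC-pairing} the right-hand side equals \((-1)^{|I|}(1+y)^{|I|}\chi_y(D_I^\circ)=(-1-y)^{|I|}\chi_y(D_I^\circ)\), which is the claim.

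The main obstacle is sign bookkeeping. It must be checked that \(\chi(R\sHom(\mathcal{E},\omega_X[n]))=\chi(\mathcal{E})\) with exactly this normalization, where the shift by \([n]\) supplies the sign \((-1)^n\) that cancels the \((-1)^n\) from Serre duality. It must also be checked that the Koszul dual of \(\mathcal{O}_{D_I}\) carries the sign \((-1)^{|I|}\). I would verify both on a small example, such as a point in \(\PP^1\), before writing them up in general.
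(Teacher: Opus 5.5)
Your proof is correct. It differs from the paper's in where the duality is applied.

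The paper first uses the projection formula to rewrite the left-hand side as \(\chi\bigl(i_I^*\mathbb{D}_X(\MC_y(U\to X))\bigr)\). It then writes \(\mathbb{D}_X(\MC_y)=(-1)^n\MC_y^\vee\otimes\omega_X\), applies Serre duality on the stratum \(D_I\) (of dimension \(n-|I|\)), converts \(i_I^*\omega_X^{-1}\otimes\omega_{D_I}\) into \(\det N_{D_I/X}\) by adjunction, and pushes forward again.

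You instead apply Serre duality once on \(X\), in the form \(\chi(\mathbb{D}_X(a)\cdot b)=\chi(a\cdot b^\vee)\). All the geometry then goes into the single identity \((i_{I*}\mathcal{O}_{D_I})^\vee=(-1)^{|I|}\,i_{I*}\det N_{D_I/X}\) in \(K(X)\). Both signs you flagged are right.

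In the SNC setting this identity is even elementary. The Koszul resolution gives \([\mathcal{O}_{D_I}]=\prod_{i\in I}(1-[\mathcal{O}_X(-D_i)])\). Its dual is \(\prod_{i\in I}(1-[\mathcal{O}_X(D_i)])=(-1)^{|I|}[\mathcal{O}_X(\sum_{i\in I}D_i)]\cdot[\mathcal{O}_{D_I}]\). Finally, \(\det N_{D_I/X}=\mathcal{O}_X(\sum_{i\in I}D_i)|_{D_I}\), exactly as used in the proof of the preceding proposition.

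So your route isolates the sign \((-1)^{|I|}\) in one transparent \(K\)-theoretic computation and never needs Serre duality on \(D_I\). The paper's route reuses the adjunction formula stated just before the corollary and mirrors the restrict-to-\(D_I\) structure of the preceding proof. Both are readings of the same Grothendieck duality fact \(i_I^!\,\omega_X[n]\cong\omega_{D_I}[n-|I|]\).
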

\begin{proof}
    Using the identity
    \[
        i_{I*}i_I^*(a)=a\cdot i_{I*}\mathcal{O}_{D_I}
    \]
    in \(K(X)\), we get
    \[
        \chi\bigl(\mathbb{D}_X(\MC_y(U\to X))\cdot i_{I*}\mathcal{O}_{D_I}\bigr)
        =\chi\bigl(i_{I*}i_I^*\mathbb{D}_X(\MC_y(U\to X))\bigr)
        =\chi\bigl(i_I^*\mathbb{D}_X(\MC_y(U\to X))\bigr).
    \]
    Since \(X\) is smooth of dimension \(n\), we have
    \[
        \mathbb{D}_X(\MC_y(U\to X))=(-1)^n\,\MC_y(U\to X)^\vee\otimes \omega_X
    \]
    in \(K(X)[y]\). Therefore
    \[
        \chi\bigl(i_I^*\mathbb{D}_X(\MC_y(U\to X))\bigr)
        =(-1)^n\chi\bigl(i_I^*\MC_y(U\to X)^\vee\otimes i_I^*\omega_X\bigr).
    \]
    Since $D_I$ is smooth, every K-class on $D_I$ is represented by a perfect complex and Serre duality holds in $K(D_I)$. Applying it on $D_I$, of dimension $n-|I|$, we obtain
    \[
        \chi\bigl(i_I^*\mathbb{D}_X(\MC_y(U\to X))\bigr)
        =(-1)^n(-1)^{n-|I|}\chi\bigl(i_I^*\MC_y(U\to X)\otimes i_I^*\omega_X^{-1}\otimes \omega_{D_I}\bigr).
    \]
    By the adjunction formula, \(i_I^*\omega_X^{-1}\otimes \omega_{D_I}\cong \det N_{D_I/X}\). Hence
    \[
        \chi\bigl(\mathbb{D}_X(\MC_y(U\to X))\cdot i_{I*}\mathcal{O}_{D_I}\bigr)
        =(-1)^{|I|}\chi\bigl(i_I^*\MC_y(U\to X)\otimes \det N_{D_I/X}\bigr).
    \]
    Applying the projection formula in the opposite direction,
    \[
        \chi\bigl(\mathbb{D}_X(\MC_y(U\to X))\cdot i_{I*}\mathcal{O}_{D_I}\bigr)
        =(-1)^{|I|}\chi\bigl(\MC_y(U\to X)\cdot i_{I*}\det N_{D_I/X}\bigr).
    \]
    By \cref{prop:MCy-SNC-pairing}, the right-hand side equals $(-1)^{|I|}(1+y)^{|I|}\chi_y(D_I^\circ)=(-1-y)^{|I|}\chi_y(D_I^\circ)$, as desired.
\end{proof}

\subsection{Motivic Chern classes of realizable matroids}
We now apply \cref{cor:GW-for-MCy} to the wonderful compactification \(W_L\). For each nonempty proper flat \(F\) of \(\mathsf{M}\), let \(D_F\subset W_L\) be the corresponding irreducible boundary divisor. If
\[
    \mathcal{F}=\emptyset\subsetneq F_1\subsetneq \cdots \subsetneq F_k\subsetneq E
\]
is a flag of flats, we write
\[
    D_{\mathcal{F}}=D_{F_1}\cap \cdots \cap D_{F_k}
    \qquad\text{and}\qquad
    D_{\mathcal{F}}^\circ
    =D_{\mathcal{F}}\setminus \bigcup_{G\notin\{F_1,\ldots,F_k\}}D_G.
\]
Then \(W_L\setminus U_L=\bigcup_F D_F\) is an SNC divisor, and \(D_{\mathcal{F}}=W_L\cap V(\sigma_{\mathcal{F}})\), so this notation is compatible with \cref{prop:MCy-SNC-pairing}.

We use the successive-minor notation introduced in the introduction. Here and later in the paper, \(\mathsf{M}|F\) denotes the restriction of \(\mathsf{M}\) to the flat \(F\), and \(\mathsf{M}/F\) denotes the contraction.

\motivicChernWeight*
\begin{proof}
    Let \(\mathcal{F}=\emptyset\subsetneq F_1\subsetneq \cdots \subsetneq F_k\subsetneq E\). Applying \cref{cor:GW-for-MCy} to \(X=W_L\), \(U=U_L\), and the boundary components \(D_{F_1},\ldots,D_{F_k}\),
    \[
        g_{\mathsf{M}}^{\mathbb{D}}(\mathcal{F})=(-1-y)^k\,\chi_y(D_{\mathcal{F}}^\circ).
    \]
    The wonderful compactification gives a canonical isomorphism
    \[
        D_{\mathcal{F}}^\circ \;\cong\; U_{\mathsf{M}|F_1}\times U_{\mathsf{M}|F_2/F_1}\times\cdots\times U_{\mathsf{M}/F_k},
    \]
    where each factor \(U_{\mathsf{N}}\) is the complement of a realizing hyperplane arrangement for \(\mathsf{N}\). Since \(\chi_y\) is multiplicative under products, and for each minor \(\mathsf{N}\) we have \(\chi_y(U_{\mathsf{N}})=\overline{\chi}_{\mathsf{N}}(-y)\) by \cite[Corollary~2.2]{Aluffi2013}, we obtain
    \[
        \chi_y(D_{\mathcal{F}}^\circ)=\overline{\chi}_{\mathsf{M}}(-y)[\mathcal{F}].
    \]
    The stated formula follows from \(\chi_{\mathsf{N}}(t)=(t-1)\overline{\chi}_{\mathsf{N}}(t)\) applied to each of the \(k+1\) factors in \(\chi_{\mathsf{M}}(-y)[\mathcal{F}]\).
\end{proof}

\begin{corollary}\label{cor:independence-of-L}
    The function \(g_{\mathsf{M}}^{\mathbb{D}}\colon \Sigma_{\mathsf{M}}\to \Z[y]\) depends only on the matroid \(\mathsf{M}\), and not on the choice of realization \(L\).
\end{corollary}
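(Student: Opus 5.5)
This is essentially a restatement of \cref{prop:GW-dual-MCy-matroid}, so the plan is to read off independence directly from the formula proved there. Fix a realization \(L\) of \(\mathsf{M}\). By \cref{prop:K-matroid-wonderful} we identify \(K(W_L)\) with \(K(\mathsf{M})\). The point to check is that this identification carries \(x_{\mathcal{F}}\in K(\mathsf{M})\) to \([\mathcal{O}_{D_{\mathcal{F}}}]\in K(W_L)\). This holds because \(D_{\mathcal{F}}=W_L\cap V(\sigma_{\mathcal{F}})\) is a transverse intersection: the boundary of \(W_L\) is SNC and meets each orbit closure properly. The restriction of \([\mathcal{O}_{V(\sigma_{\mathcal{F}})}]\) to \(W_L\) is therefore \([\mathcal{O}_{D_{\mathcal{F}}}]\). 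Under this identification, the Euler characteristic on \(K(\mathsf{M})\) corresponds to \(\chi\) on \(W_L\), as in the construction of \cite{LARSON2024109554}. Consequently \(g^{\mathbb{D}}_{\mathsf{M}}(\mathcal{F})=\chi(\mathbb{D}(\MC_y(\mathsf{M}))\cdot x_{\mathcal{F}})\) is exactly the quantity computed in \cref{cor:GW-for-MCy} with \(I=\{F_1,\dots,F_k\}\).

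Next I would invoke \cref{prop:GW-dual-MCy-matroid}. It gives \(g^{\mathbb{D}}_{\mathsf{M}}(\mathcal{F})=\chi_{\mathsf{M}}(-y)[\mathcal{F}]/(-1-y)\). The right-hand side is built from characteristic polynomials of the minors \(\mathsf{M}|F_1,\ \mathsf{M}|F_2/F_1,\dots,\mathsf{M}/F_k\), and these are matroid invariants. So the value at every flag of flats is independent of \(L\). The index set \(\Sigma_{\mathsf{M}}\), the flags of flats, is also determined by \(\mathsf{M}\) alone.

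Finally, to upgrade this to a statement about the class \(\mathbb{D}(\MC_y(\mathsf{M}))\in K(\mathsf{M})[y]\), I would apply the isomorphism \(\Phi:K(\mathsf{M})\to\GW(\mathsf{M})\), extended coefficientwise in \(y\). Since the weight is independent of \(L\), so is the class. Because \(\mathbb{D}\) is an involution on \(K(W_L)\), the same holds for \(\MC_y(\mathsf{M})\) itself.

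The only genuinely delicate step is the compatibility in the first paragraph. One must check that the two identifications (via \(X_{\Sigma_{\mathsf{M}}}\) and via \(W_L\)) respect both \(x_{\mathcal{F}}\mapsto[\mathcal{O}_{D_{\mathcal{F}}}]\) and \(\chi\), so that the abstract weight on \(\Sigma_{\mathsf{M}}\) is literally the geometric pairing. I would handle this by citing \cite[Proposition~1.6]{LARSON2024109554} together with the transversality of \(W_L\) to the torus orbits.
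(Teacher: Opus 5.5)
Your proposal is correct and follows the paper: the corollary is read off directly from the explicit formula \(\chi_{\mathsf{M}}(-y)[\mathcal{F}]/(-1-y)\) of \cref{prop:GW-dual-MCy-matroid}, and your first paragraph makes explicit the identification \(x_{\mathcal{F}}\leftrightarrow[\mathcal{O}_{D_{\mathcal{F}}}]\) (and the compatibility of \(\chi\)) that the paper uses implicitly through \(D_{\mathcal{F}}=W_L\cap V(\sigma_{\mathcal{F}})\). Your last step goes beyond the statement (it is the paper's \cref{rmk:independence-of-L}), and there involutivity of \(\mathbb{D}\) alone is not enough: you also need \(\mathbb{D}_{W_L}\) to correspond to an \(L\)-independent operator on \(K(\mathsf{M})\), which holds because dualization is intrinsic and \([\omega_{W_L}]\) corresponds to the matroidal canonical class \(\omega_{\mathsf{M}}\) of \cite{LARSON2024109554}.
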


\begin{remark}\label{rmk:independence-of-L}
    If one identifies \(K(W_L)\) with the matroid \(K\)-ring \(K(\mathsf{M})\) via Grothendieck weights as discussed in the previous section, then the preceding corollary shows that \(\mathbb{D}_{W_L}(\MC_y(\mathsf{M}))\), and hence also \(\MC_y(\mathsf{M})\), is independent of the choice of realization \(L\).
\end{remark}

\subsection{Motivic Chern classes of general matroids}
Next, we define \(\MC_y(\mathsf{M})\) in the non-realizable case. We use \cref{prop:GW-dual-MCy-matroid} to define the motivic Chern class of a general matroid. Our goal is to prove the following theorem.

\motivicChernGeneral*

We first need two combinatorial lemmas. For a matroid $\mathsf{N}$, we write $L(\mathsf{N})$ for its lattice of flats, and $\beta(\mathsf{N})$ for Crapo's beta invariant \((-1)^{\rk \mathsf{N}-1}\overline{\chi}_{\mathsf{N}}(1)\).

\begin{lemma}\label{lem:pointed-convolution}
    For every loopless matroid \(\mathsf{N}\) on $E=[n]$ of rank $r$ and every \(i\in E\),
    \[
        \overline{\chi}_{\mathsf{N}}(t)
        =
        \sum_{\substack{F\in L(\mathsf{N})\\ i\in F}}
        (-1)^{\rk(F)-1}\beta(\mathsf{N}|F)\,\chi_{\mathsf{N}/F}(t).
    \]
\end{lemma}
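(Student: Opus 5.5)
The plan is to prove the identity by a Möbius-function computation in the lattice of flats \(L(\mathsf{N})\). The main tool is a ``pointed'' expansion of the reduced characteristic polynomial that singles out flats avoiding \(i\), together with the convolution identity \(\sum_{F\in[A,H]}\mu(F,H)=\delta_{A,H}\). Throughout I use \(\chi_{\mathsf{N}}(t)=\sum_{G\in L(\mathsf{N})}\mu(\hat 0,G)\,t^{r-\rk G}\), \(\chi_{\mathsf{N}}=(t-1)\overline{\chi}_{\mathsf{N}}\), and the analogous formulas for the minors \(\mathsf{N}|F\) and \(\mathsf{N}/F\). The intervals are \([\hat 0,F]\cong L(\mathsf{N}|F)\) and \([F,\hat 1]\cong L(\mathsf{N}/F)\). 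Since \(\mathsf{N}\) is loopless, \(\hat 0=\emptyset\) and \(a\coloneqq\operatorname{cl}(i)\) is an atom.

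\emph{Step 1 (Weisner form).} I will first show that for any loopless \(\mathsf{N}\) of rank \(r\ge1\) and any \(i\in E\),
\[
    \overline{\chi}_{\mathsf{N}}(t)=\sum_{G\in L(\mathsf{N}),\, i\notin G}\mu(\hat 0,G)\,t^{r-1-\rk G}.
\]
To verify this, multiply the right-hand side by \(t-1\). This gives \(\sum_{G\not\ni i}\mu(\hat 0,G)t^{r-\rk G}-\sum_{G\not\ni i}\mu(\hat 0,G)t^{r-\rk(G\vee a)}\), using \(\rk(G\vee a)=\rk G+1\) when \(i\notin G\). Group the second sum by \(H=G\vee a\), which ranges over flats containing \(i\). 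Weisner's theorem gives \(\sum_{G:\,G\vee a=H}\mu(\hat 0,G)=0\) for every \(H\ge a\). The only \(G\ni i\) with \(G\vee a=H\) is \(G=H\), so \(\sum_{G\not\ni i,\,G\vee a=H}\mu(\hat 0,G)=-\mu(\hat 0,H)\). Hence the product equals \(\sum_{G\not\ni i}\mu(\hat 0,G)t^{r-\rk G}+\sum_{H\ni i}\mu(\hat 0,H)t^{r-\rk H}=\chi_{\mathsf{N}}(t)\).

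\emph{Step 2 (expand and swap).} For \(F\ni i\), the definition of \(\beta\) gives \((-1)^{\rk F-1}\beta(\mathsf{N}|F)=\overline{\chi}_{\mathsf{N}|F}(1)\). Step 1 applied to \(\mathsf{N}|F\), which is loopless and contains \(i\), gives \(\overline{\chi}_{\mathsf{N}|F}(1)=\sum_{G\le F,\,i\notin G}\mu(\hat 0,G)\). Also \(\chi_{\mathsf{N}/F}(t)=\sum_{H\ge F}\mu(F,H)t^{r-\rk H}\). Substituting both, the right-hand side of the lemma becomes
\[
    \sum_{G\not\ni i}\mu(\hat 0,G)\sum_{H\ge G\vee a}t^{r-\rk H}\sum_{F\in[G\vee a,\,H]}\mu(F,H).
\]
Here the condition ``\(F\ni i\) and \(F\ge G\)'' has been rewritten as \(F\ge G\vee a\).

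\emph{Step 3 (collapse).} The innermost sum is \(\delta_{H,\,G\vee a}\). So the expression reduces to \(\sum_{G\not\ni i}\mu(\hat 0,G)t^{r-\rk(G\vee a)}=\sum_{G\not\ni i}\mu(\hat 0,G)t^{r-1-\rk G}\), which equals \(\overline{\chi}_{\mathsf{N}}(t)\) by Step 1.

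The main obstacle is Step 1, specifically the bookkeeping in applying Weisner's theorem: the flat \(H=a\) itself and the rank shift \(\rk(G\vee a)=\rk G+1\) must be handled correctly. The rest is mechanical, except for checking the sign conventions linking \(\beta\) to \(\overline{\chi}(1)\) and the identifications of intervals with minors.
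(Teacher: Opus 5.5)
Your proposal is correct and follows essentially the same route as the paper: write \((-1)^{\rk F-1}\beta(\mathsf{N}|F)=\overline{\chi}_{\mathsf{N}|F}(1)\) as a sum of \(\mu(\emptyset,G)\) over flats \(G\subseteq F\) with \(i\notin G\), expand \(\chi_{\mathsf{N}/F}\), swap the order of summation, and collapse the inner sum over \(F\in[\overline{G\cup\{i\}},H]\) to \(\delta_{H,\overline{G\cup\{i\}}}\). The only difference is that you derive the pointed expansion \(\overline{\chi}_{\mathsf{N}}(t)=\sum_{i\notin G}\mu(\emptyset,G)\,t^{r-1-\rk G}\) from Weisner's theorem (Step 1, which checks out), whereas the paper simply cites this expansion as a consequence of Weisner's theorem.
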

\begin{proof}
    It suffices to expand the definitions. By definition of the characteristic polynomial, see, for example, \cite[Section~7.3]{Oxley2011},
    \[
        \chi_{\mathsf{N}/F}(t)=\sum_{F\subseteq G}\mu(F,G)t^{r-\rk(G)},
    \]
    and by applying Weisner's theorem to \(\mathsf{N}|F\), see, for example, \cite[Section~7.4]{Oxley2011},
    \[
        (-1)^{\rk(F)-1}\beta(\mathsf{N}|F)=\overline{\chi}_{\mathsf{N}|F}(1)=\sum_{\substack{H\in L(\mathsf{N})\\H\subseteq F,\ i\not\in H}}\mu(\emptyset,H).
    \]
    Therefore the right-hand side is
    \[
        \sum_{\substack{H\subseteq F\subseteq G\\i\not\in H,\ i\in F}}\mu(\emptyset,H)\mu(F,G)t^{r-\rk(G)}.
    \]
    Rearranging the summation gives
    \[
        \sum_{\substack{H\subseteq G\\i\not\in H}}\mu(\emptyset,H)t^{r-\rk(G)}
        \left(\sum_{\substack{F:\,\overline{H\cup\{i\}}\subseteq F\subseteq G}}\mu(F,G)\right),
    \]
    where \(\overline{H\cup\{i\}}\) denotes the closure of \(H\cup\{i\}\). By the defining property of the M\"obius function on a poset, see, for example, \cite[Section~3.7]{StanleyEC1}, the inner sum is zero unless \(G=\overline{H\cup\{i\}}\), in which case it is equal to \(1\). Hence the above summation simplifies to
    \[
        \sum_{i\not\in H}\mu(\emptyset,H)t^{r-\rk(\overline{H\cup\{i\}})}.
    \]
    Since \(H\) is a flat and \(i\not\in H\), we have \(\rk(\overline{H\cup\{i\}})=\rk(H)+1\). Thus the previous expression is equal to
    \[
        \sum_{i\not\in H}\mu(\emptyset,H)t^{r-\rk(H)-1},
    \]
    which is exactly the pointed expression for \(\overline{\chi}_{\mathsf{N}}(t)\) given by Weisner's theorem.
\end{proof}

\begin{lemma}\label{lem:Psi-formula}
    For every loopless matroid \(\mathsf{N}\) on $E=[n]$ of rank $r$ and every \(i\in E\),
    \[
        \sum_{\substack{\mathcal{F}\\i\in H,\forall H\in \mathcal{F}}}(-1)^{\ell(\mathcal{F})} \chi_{\mathsf{N}}(t)[\mathcal{F}]=(-1)^{r-1}\beta(\mathsf{N})(t-1).
    \]
    Note that we allow the empty flag in the summation.
\end{lemma}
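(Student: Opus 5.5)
The plan is to prove the identity by induction on the rank \(r\) of \(\mathsf{N}\). The engine is a recursion obtained by splitting off the \emph{largest} flat of each flag, which is then matched against \cref{lem:pointed-convolution}. For a loopless matroid \(\mathsf{N}\) and \(i\in E\), write \(\Psi_i(\mathsf{N})\) for the left-hand side of the statement.

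\emph{Base case.} When \(r=1\), the only flag of nonempty proper flats is the empty flag. Hence \(\Psi_i(\mathsf{N})=\chi_{\mathsf{N}}(t)=t-1\). On the other side, \(\beta(\mathsf{N})=\overline{\chi}_{\mathsf{N}}(1)=1\), so the two sides agree.

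\emph{Recursion.} Split the sum according to whether the flag is empty. Let \(\mathcal{F}=(F_1\subsetneq\cdots\subsetneq F_k)\) be a nonempty flag with \(i\in F_j\) for every \(j\), and set \(F=F_k\). Then \(F\) is a nonempty proper flat containing \(i\). The truncated flag \((F_1,\dots,F_{k-1})\) is a flag of nonempty proper flats of the restriction \(\mathsf{N}|F\), each containing \(i\). Conversely, every such pair (a flat \(F\) and a flag in \(\mathsf{N}|F\)) arises from exactly one \(\mathcal{F}\). Flats of \(\mathsf{N}|F\) are exactly the flats of \(\mathsf{N}\) contained in \(F\), and \(\mathsf{N}|F\) is loopless. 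The successive minors of \(\mathcal{F}\) are those of the truncated flag inside \(\mathsf{N}|F\), followed by \(\mathsf{N}/F\). Therefore
\[
    \chi_{\mathsf{N}}(t)[\mathcal{F}]=\chi_{\mathsf{N}|F}(t)[(F_1,\dots,F_{k-1})]\cdot\chi_{\mathsf{N}/F}(t).
\]
The length drops by one, which contributes a sign \(-1\). We obtain
\[
    \Psi_i(\mathsf{N})=\chi_{\mathsf{N}}(t)-\sum_{\substack{F\in L(\mathsf{N}),\ F\neq E\\ i\in F}}\Psi_i(\mathsf{N}|F)\,\chi_{\mathsf{N}/F}(t).
\]

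\emph{Inductive step.} Each \(\mathsf{N}|F\) with \(F\) proper has rank \(\rk F<r\), so by induction \(\Psi_i(\mathsf{N}|F)=(-1)^{\rk F-1}\beta(\mathsf{N}|F)(t-1)\). It therefore suffices to show
\[
    \chi_{\mathsf{N}}(t)=\sum_{\substack{F\in L(\mathsf{N})\\ i\in F}}(-1)^{\rk F-1}\beta(\mathsf{N}|F)(t-1)\,\chi_{\mathsf{N}/F}(t),
\]
where the sum now includes the term \(F=E\). That term equals \((-1)^{r-1}\beta(\mathsf{N})(t-1)\), because \(\chi_{\mathsf{N}/E}=1\). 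Moving it to the other side of the recursion gives exactly the claimed value of \(\Psi_i(\mathsf{N})\).

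The displayed identity is \cref{lem:pointed-convolution} multiplied by \(t-1\), using \(\chi_{\mathsf{N}}(t)=(t-1)\overline{\chi}_{\mathsf{N}}(t)\). The main point of care is bookkeeping rather than a real obstacle. One must check that the bijection between flags and pairs (flat \(F\), flag in \(\mathsf{N}|F\)) respects both the containment condition on \(i\) and the sign. One must also confirm that the \(F=E\) term of \cref{lem:pointed-convolution} corresponds exactly to \(\Psi_i(\mathsf{N})\) itself.
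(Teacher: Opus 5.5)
Your proposal is correct and follows essentially the same argument as the paper. Both proofs induct on rank with base case \(r=1\), split off the largest flat \(F_k\) to obtain a recursion over \(\mathsf{N}|F\) and \(\mathsf{N}/F\), and finish with \cref{lem:pointed-convolution} after separating the \(F=E\) term and using \(\chi_{\mathsf{N}}(t)=(t-1)\overline{\chi}_{\mathsf{N}}(t)\).
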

\begin{proof}
    We prove the lemma by induction on the rank \(r\). If \(r=1\), then \(\mathsf{N}\) has no nonempty proper flats, so the only flag contributing to the summation is the empty flag. Hence the left-hand side is
    \[
        \chi_{\mathsf{N}}(t)=t-1.
    \]
    On the other hand, \(\overline{\chi}_{\mathsf{N}}(t)=1\), so \(\beta(\mathsf{N})=1\). Therefore the desired identity holds in this case.

    Assume now \(r\ge 2\). Separating the empty flag from the summation, we obtain
    \[
        \sum_{\substack{\mathcal{F}\\i\in H,\forall H\in \mathcal{F}}}(-1)^{\ell(\mathcal{F})} \chi_{\mathsf{N}}(t)[\mathcal{F}]
        =
        \chi_{\mathsf{N}}(t)
        +
        \sum_{\substack{\mathcal{F}\neq\emptyset\\i\in H,\forall H\in \mathcal{F}}}(-1)^{\ell(\mathcal{F})} \chi_{\mathsf{N}}(t)[\mathcal{F}].
    \]

    To further simplify the second summation, let \(F_k\) be the largest proper flat in \(\mathcal{F}\). Since \(i\in H\) for all \(H\in \mathcal{F}\), in particular \(i\in F_k\). Therefore, we separate out \(F_k\) in the summation:
    \begin{align*}
        & \sum_{\substack{\mathcal{F}\neq\emptyset\\i\in H,\forall H\in \mathcal{F}}}(-1)^{\ell(\mathcal{F})} \chi_{\mathsf{N}}(t)[\mathcal{F}] \\
        ={} & \sum_{\substack{\mathcal{F}:F_1\subsetneq\cdots\subsetneq F_k,k>0 \\i\in H,\forall H\in \mathcal{F}}}(-1)^k \chi_{\mathsf{N}|F_1}(t)\cdots \chi_{\mathsf{N}|F_k/F_{k-1}}(t)\chi_{\mathsf{N}/F_k}(t)\\
        ={} & -\sum_{\substack{F\in L(\mathsf{N})\\i\in F,\ F\neq E}}\left(\sum_{\substack{\mathcal{F}'\\i\in H,\forall H\in \mathcal{F}'}}\!\!(-1)^{\ell(\mathcal{F}')} \chi_{\mathsf{N}|F}(t)[\mathcal{F}']\right)\chi_{\mathsf{N}/F}(t).
    \end{align*}
    Here \(\mathcal{F}'\) ranges over all flags of nonempty proper flats of \(\mathsf{N}|F\), including the empty flag. Applying the induction hypothesis to \(\mathsf{N}|F\), we obtain
    \[
        \sum_{\substack{\mathcal{F}\neq\emptyset\\i\in H,\forall H\in \mathcal{F}}}(-1)^{\ell(\mathcal{F})} \chi_{\mathsf{N}}(t)[\mathcal{F}]
        =
        -(t-1)\sum_{\substack{F\in L(\mathsf{N})\\i\in F,\ F\neq E}}(-1)^{\rk(F)-1}\beta(\mathsf{N}|F)\chi_{\mathsf{N}/F}(t).
    \]

    By \cref{lem:pointed-convolution},
    \[
        \overline{\chi}_{\mathsf{N}}(t)
        =
        \sum_{\substack{F\in L(\mathsf{N})\\ i\in F}}
        (-1)^{\rk(F)-1}\beta(\mathsf{N}|F)\,\chi_{\mathsf{N}/F}(t).
    \]
    Separating the term \(F=E\), we get
    \[
        \sum_{\substack{F\in L(\mathsf{N})\\i\in F,\ F\neq E}}(-1)^{\rk(F)-1}\beta(\mathsf{N}|F)\chi_{\mathsf{N}/F}(t)
        =
        \overline{\chi}_{\mathsf{N}}(t)-(-1)^{r-1}\beta(\mathsf{N}),
    \]
    since \(\chi_{\mathsf{N}/E}(t)=1\). Therefore,
    \begin{align*}
        \sum_{\substack{\mathcal{F}\\i\in H,\forall H\in \mathcal{F}}}(-1)^{\ell(\mathcal{F})} \chi_{\mathsf{N}}(t)[\mathcal{F}]
        &=
        \chi_{\mathsf{N}}(t)-(t-1)\bigl(\overline{\chi}_{\mathsf{N}}(t)-(-1)^{r-1}\beta(\mathsf{N})\bigr) \\
        &=
        (-1)^{r-1}\beta(\mathsf{N})(t-1),
    \end{align*}
    where the last equality follows from \(\chi_{\mathsf{N}}(t)=(t-1)\overline{\chi}_{\mathsf{N}}(t)\).
\end{proof}

\begin{proof}[Proof of \cref{thm:motivic-Chern-non-realizable}]
    Let \(t=-y\). We prove the theorem by verifying the $K$-balancing condition of \cref{prop:K-balancing-for-matroid}. Since \(t-1=-1-y\) is a nonzero element of the integral domain \(\Z[y]\), a \(\Z[y]\)-valued function satisfies the balancing condition if and only if its product with \(t-1\) does. It therefore suffices to check the balancing condition for \((t-1)g_{\mathsf{M}}^{\mathbb{D}}=\chi_{\mathsf{M}}(t)[\cdot]\).

    We first prove the balancing condition for \(\mathcal{G}=\emptyset\). In this case, we need to show that for every loopless matroid \(\mathsf{M}\) on \(E=[n]\) and every pair \(i,j\in E\),
    \begin{equation}\label{eq:Aij-symmetry}
        A_{i,j}(\mathsf{M})=A_{j,i}(\mathsf{M}),
    \end{equation}
    where we set
    \[
        A_{i,j}(\mathsf{M})
        \coloneq
        \sum_{\substack{i\in H,\;j\notin H\\ \forall H\in\mathcal{F}}}
        (-1)^{\ell(\mathcal{F})}\chi_{\mathsf{M}}(t)[\mathcal{F}].
    \]
    Here the sum ranges over nonempty flags \(\mathcal{F}\) of nonempty proper flats of \(\mathsf{M}\).
    To further simplify the summation, we note that the condition \(j\not\in H\) for all \(H\in \mathcal{F}\) is equivalent to saying $j\not\in F_k$, where $F_k$ is the largest proper flat in $\mathcal{F}$. Therefore, we separate out $F_k$ in the summation:
    \begin{align*}
        A_{i,j}(\mathsf{M}) & =\sum_{\substack{\mathcal{F}:F_1\subsetneq\cdots\subsetneq F_k,k>0 \\i\in F_1,j\not\in F_k}}(-1)^k \chi_{\mathsf{M}|F_1}(t)\cdots \chi_{\mathsf{M}|F_{k}/F_{k-1}}(t)\chi_{\mathsf{M}/F_k}(t)\\
                            & =\sum_{\substack{F_k\in L(\mathsf{M})\\i\in F_k,\ j\not\in F_k}}\left(\sum_{\substack{\mathcal{F}':F_1\subsetneq\cdots\subsetneq F_{k-1}\\i\in H,\forall H\in \mathcal{F}'}}(-1)^k \chi_{\mathsf{M}|F_1}(t)\cdots \chi_{\mathsf{M}|F_k/F_{k-1}}(t)\right)\chi_{\mathsf{M}/F_k}(t)\\
                            & =\sum_{\substack{F_k\in L(\mathsf{M})\\i\in F_k,\ j\not\in F_k}}\sum_{\substack{\mathcal{F}'\\i\in H,\forall H\in \mathcal{F}'}}-(-1)^{\ell(\mathcal{F}')} \chi_{\mathsf{M}|F_k}(t)[\mathcal{F}']\chi_{\mathsf{M}/F_k}(t).
    \end{align*}

    Applying \cref{lem:Psi-formula} to \(\mathsf{N}=\mathsf{M}|F_k\), and noting that \(\mathsf{M}|F_j/F_{j-1}=(\mathsf{M}|F_k)|F_j/F_{j-1}\), we obtain
    \[
        A_{i,j}(\mathsf{M})=-(t-1)\sum_{\substack{F\in L(\mathsf{M})\\i\in F,\ j\not\in F}}(-1)^{\rk(F)-1}\beta(\mathsf{M}|F)\chi_{\mathsf{M}/F}(t).
    \]
    Here \(F=F_k\) ranges over all flats of \(\mathsf{M}\) containing \(i\) and not containing \(j\).
    
    Furthermore, 
    \begin{align*}
        & \sum_{\substack{F\in L(\mathsf{M})\\i\in F,\ j\not\in F}}(-1)^{\rk(F)-1}\beta(\mathsf{M}|F)\chi_{\mathsf{M}/F}(t) \\
        = & \sum_{\substack{F\in L(\mathsf{M})\\i\in F}}(-1)^{\rk(F)-1}\beta(\mathsf{M}|F)\chi_{\mathsf{M}/F}(t) 
        -
        \sum_{\substack{F\in L(\mathsf{M})\\i,j\in F}}(-1)^{\rk(F)-1}\beta(\mathsf{M}|F)\chi_{\mathsf{M}/F}(t).
    \end{align*}
    By \cref{lem:pointed-convolution}, the first summation is
    \(
        \overline{\chi}_{\mathsf{M}}(t),
    \)
    hence independent of \(i\). Therefore the right-hand side is symmetric in \(i\) and \(j\), and we obtain \(A_{i,j}=A_{j,i}\).
    
    For a general \(\{i,j\}\)-neutral flag \(\mathcal{G}\), write
    \[
        \emptyset=G_0\subsetneq G_1\subsetneq\cdots\subsetneq G_m\subsetneq G_{m+1}=E.
    \]
    Since \(\mathcal{G}\) is \(\{i,j\}\)-neutral, there is a unique index \(s\) such that \(i,j\not\in G_s\) and \(i,j\in G_{s+1}\). Set
    \[
        \mathsf{N}= (\mathsf{M}|G_{s+1})/G_s.
    \]
    Then the interval \([G_s,G_{s+1}]\subseteq L(\mathsf{M})\) is naturally identified with \(L(\mathsf{N})\). Under this identification, every refinement \(\mathcal{F}\sref \mathcal{G}\) contributing to the left-hand side of the balancing condition in \cref{prop:K-balancing-for-matroid} is obtained by inserting a nonempty flag \(\mathcal{H}\) of nonempty proper flats of \(\mathsf{N}\) such that \(i\in H\) and \(j\not\in H\) for every \(H\in \mathcal{H}\). Moreover,
    \[
        (-1)^{\ell(\mathcal{F})}\chi_{\mathsf{M}}(t)[\mathcal{F}]
        =
        C_{\mathcal{G}}\,(-1)^{\ell(\mathcal{H})}\chi_{\mathsf{N}}(t)[\mathcal{H}],
    \]
    where \(C_{\mathcal{G}}\) is the product of the factors \(\chi_{(\mathsf{M}|G_{a+1})/G_a}(t)\) coming from the intervals of \(\mathcal{G}\) other than \([G_s,G_{s+1}]\), multiplied by \((-1)^{\ell(\mathcal{G})}\). Hence the left-hand side of the balancing condition is equal to \(C_{\mathcal{G}}A_{i,j}(\mathsf{N})\), and similarly the right-hand side is equal to \(C_{\mathcal{G}}A_{j,i}(\mathsf{N})\). By the empty-flag case proved above, these two quantities are equal. Therefore \(\chi_{\mathsf{M}}(-y)[\cdot]\), and hence also \(g_{\mathsf{M}}^{\mathbb{D}}\), satisfies the balancing condition of \cref{prop:K-balancing-for-matroid}.
\end{proof}

We are in a position to define the motivic Chern class of a loopless matroid $\mathsf{M}$. Recall that in \cite[Theorem~6.2]{LARSON2024109554}, the Serre duality operator is also defined in \(K(\mathsf{M})\) for a general loopless \(\mathsf{M}\): \(\mathbb{D}(\zeta)=\omega_M \zeta^\vee\). Since \(\Phi:K(\mathsf{M})\xrightarrow{\sim}\GW(\Sigma_{\mathsf{M}})\) is an isomorphism and \(\mathbb{D}\) is invertible on \(K(\mathsf{M})\), there is a unique element whose Grothendieck weight after applying \(\mathbb{D}\) is \(g_{\mathsf{M}}^{\mathbb{D}}\).

\begin{definition}\label{def:motivic-Chern-general}
    For a loopless matroid \(\mathsf{M}\), the motivic Chern class is the unique element \(\MC_y(\mathsf{M})\in K(\mathsf{M})\) such that the Grothendieck weight of \(\mathbb{D}(\MC_y(\mathsf{M}))\) is \(g_{\mathsf{M}}^{\mathbb{D}}\).
\end{definition}

By \cref{prop:GW-dual-MCy-matroid}, this definition coincides with \cref{def:motivic-Chern-realizable} when $\mathsf{M}$ is realizable.

\subsection{Specialization to CSM classes}
In this subsection, we show that the specialization of \(\MC_y(\mathsf{M})\) at \(y=-1\) recovers the Chern--Schwartz--MacPherson classes of matroids. Since the varieties appearing here are smooth and proper, we view the Hirzebruch class transformation
\[
    T_{y*}=td_{(1+y)}\circ \MC_y
\]
as taking values in the rational Chow group \(A_*(-)_\Q[y]\). For \(\beta\in K(X)\), write
\[
    \ch_{(1+y)}(\beta)\coloneq \sum_{p\ge 0}(1+y)^p \ch^p(\beta)\in A^*(X)_\Q[y].
\]
For \(\alpha\in K(X)[y]\), write
\[
    td_*(\alpha)=\sum_{p\ge 0} td_p(\alpha),
\]
where \(td_p(\alpha)\) denotes the \(A_p(X)\)-component of \(td_*(\alpha)\), and set
\[
    td_{(1+y)}(\alpha)\coloneq \sum_{p\ge 0}(1+y)^{-p}td_p(\alpha).
\]
Thus \(T_{y*}(U\to X)=td_{(1+y)}(\MC_y(U\to X))\).

\begin{proposition}\label{prop:CSM-SNC-pairing}
    Let \(X\) be a smooth proper variety, \(D=\bigcup_{i=1}^m D_i\) an SNC divisor, and \(U=X\setminus D\). For each \(I\subseteq \{1,\ldots,m\}\), set \(D_I=\bigcap_{i\in I}D_i\), let \(i_I:D_I\hookrightarrow X\) be the closed embedding, and let \(D_I^\circ=D_I\setminus \bigcup_{j\notin I}D_j\). Then
    \[
        \int_X ch_{(1+y)}\bigl(i_{I*}\det N_{D_I/X}\bigr)\cap T_{y*}(U\to X)
        =(1+y)^{|I|}\chi_y(D_I^\circ).
    \]
    In particular,
    \[
        \int_X [D_I]\cap c_{\mathrm{SM}}(1_U)=\chi(D_I^\circ).
    \]
\end{proposition}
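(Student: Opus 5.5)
The plan is to deduce the first identity from \cref{prop:MCy-SNC-pairing} by a Hirzebruch--Riemann--Roch argument, and then obtain the CSM statement by specializing at \(y=-1\).

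\textbf{Step 1: rewrite the \(K\)-theoretic pairing.} By HRR on the smooth proper \(X\), for \(a,b\in K(X)\) we have
\[
  \chi(a\cdot b)=\int_X \ch(a)\cap td_*(b),
\]
where \(td_*(b)=\ch(b)\,td(T_X)\cap[X]\) is the Baum--Fulton--MacPherson Todd class. Applying this with \(a=i_{I*}\det N_{D_I/X}\) and \(b=\MC_y(U\to X)\), \cref{prop:MCy-SNC-pairing} becomes
\[
  \int_X \ch\bigl(i_{I*}\det N_{D_I/X}\bigr)\cap td_*\bigl(\MC_y(U\to X)\bigr)=(1+y)^{|I|}\chi_y(D_I^\circ).
\]

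\textbf{Step 2: insert the \((1+y)\)-rescaling.} The degree-zero part of the pairing only sees terms where \(\ch^p(a)\) meets \(td_p(b)\). Now \(\ch_{(1+y)}\) multiplies \(\ch^p\) by \((1+y)^p\), while \(td_{(1+y)}\) multiplies \(td_p\) by \((1+y)^{-p}\). These factors cancel in each \(\ch^p\cap td_p\) term, so
\[
  \int_X \ch_{(1+y)}(a)\cap td_{(1+y)}(b)=\int_X \ch(a)\cap td_*(b).
\]
Since \(T_{y*}(U\to X)=td_{(1+y)}(\MC_y(U\to X))\), this proves the first identity. This step is formal bookkeeping. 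The one point to check is that the identity holds coefficientwise in \(\Q[y]\); I would verify this via \(y\)-linearity, or by noting that both sides are polynomials in \(y\) (the rescaling introduces possibly negative powers of \(1+y\), but these cancel).

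\textbf{Step 3: specialize at \(y=-1\).} On the right, \(\chi_{-1}(D_I^\circ)=\chi(D_I^\circ)\), giving \((1+y)^{|I|}\chi(D_I^\circ)\) at \(y=-1\). On the left, \(T_{-1*}=c_{\mathrm{SM}}\otimes\Q\). Also \(\ch_{(1+y)}(a)\to\ch^0(a)\), which vanishes here because \(i_{I*}\det N\) is supported in codimension \(|I|\). So the left side vanishes to order \(|I|\) as well, and both sides must be divided by \((1+y)^{|I|}\) before specializing.

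To do this, use that the lowest-degree term of \(\ch(i_{I*}\det N)\) is \(\ch^{|I|}=[D_I]\), by GRR the leading term of a pushforward. Hence
\[
  \ch_{(1+y)}(a)=(1+y)^{|I|}\bigl([D_I]+O(1+y)\bigr).
\]
Dividing both sides by \((1+y)^{|I|}\) and setting \(y=-1\) yields
\[
  \int_X [D_I]\cap c_{\mathrm{SM}}(1_U)=\chi(D_I^\circ).
\]

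\textbf{Main obstacle.} The delicate point is this last specialization. One must know that \(T_{y*}\) is regular at \(y=-1\) with value \(c_{\mathrm{SM}}\) (this is from \cite{BrasseletSchurmannYokura2010}), so that the higher-order terms \(O(1+y)\) vanish after division. I would confirm this using the polynomiality of \(T_{y*}\) in \(y\) as proved in \cite{BrasseletSchurmannYokura2010}.
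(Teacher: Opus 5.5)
Your proposal is correct and follows the paper's proof essentially step for step. You use HRR to turn \cref{prop:MCy-SNC-pairing} into a Chow-theoretic pairing, and the $(1+y)^{\pm p}$ factors cancel degree by degree. GRR for the regular embedding $i_I$ then gives the factorization $(1+y)^{|I|}\bigl([D_I]+(1+y)\gamma_I(y)\bigr)$, after which you cancel $(1+y)^{|I|}$ in $\Q[y]$ and set $y=-1$ using $T_{-1*}=c_{\mathrm{SM}}\otimes\Q$ and $\chi_{-1}=\chi$.
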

\begin{proof}
    By Hirzebruch--Riemann--Roch,
    \[
        \chi(\alpha\cdot\beta)=\int_X \ch(\beta)\cap td_*(\alpha)
    \]
    for \(\alpha\in K(X)[y]\) and \(\beta\in K(X)\). Expanding by dimension gives
    \[
        \chi(\alpha\cdot\beta)=\sum_{p\ge 0}\int_X \ch^p(\beta)\cap td_p(\alpha).
    \]
    Therefore
    \begin{align*}
        \chi(\alpha\cdot\beta)
         &=\sum_{p\ge 0}\int_X (1+y)^p \ch^p(\beta)\cap (1+y)^{-p}td_p(\alpha) \\
         &=\int_X ch_{(1+y)}(\beta)\cap td_{(1+y)}(\alpha).
    \end{align*}
    Applying this to \(\alpha=\MC_y(U\to X)\) and \(\beta=i_{I*}\det N_{D_I/X}\), and then using \cref{prop:MCy-SNC-pairing}, we obtain
    \[
        \int_X ch_{(1+y)}\bigl(i_{I*}\det N_{D_I/X}\bigr)\cap T_{y*}(U\to X)
        =\chi\bigl(\MC_y(U\to X)\cdot i_{I*}\det N_{D_I/X}\bigr)
        =(1+y)^{|I|}\chi_y(D_I^\circ).
    \]

    To specialize at \(y=-1\), we apply Grothendieck--Riemann--Roch to the regular embedding \(i_I\):
    \[
        ch\bigl(i_{I*}\det N_{D_I/X}\bigr)
        =i_{I*}\bigl(\ch(\det N_{D_I/X})\,td(N_{D_I/X})^{-1}\bigr).
    \]
    Hence \(\ch^p(i_{I*}\det N_{D_I/X})=0\) for \(p<|I|\), and the codimension-\(|I|\) component is
    \[
        i_{I*}\bigl(\ch^0(\det N_{D_I/X})\,td^0(N_{D_I/X})^{-1}\bigr)=i_{I*}(1)=[D_I].
    \]
    Thus \((1+y)^{-|I|}ch_{(1+y)}\bigl(i_{I*}\det N_{D_I/X}\bigr)=[D_I]+(1+y)\gamma_I(y)\) for some \(\gamma_I(y)\in A^*(X)_\Q[y]\). Dividing the displayed integral formula above by \((1+y)^{|I|}\) and substituting,
    \[
        \int_X \bigl([D_I]+(1+y)\gamma_I(y)\bigr)\cap T_{y*}(U\to X)=\chi_y(D_I^\circ).
    \]
    Setting \(y=-1\) and using \(T_{-1,*}=c_{\mathrm{SM}}\otimes\Q\) and \(\chi_{-1}=\chi\) (topological Euler characteristic) from \cite{BrasseletSchurmannYokura2010}, we obtain
    \[
        \int_X [D_I]\cap c_{\mathrm{SM}}(1_U)=\chi(D_I^\circ).\qedhere
    \]
\end{proof}

\csmSpecialization*
\begin{proof}
    Let \(L\) realize \(\mathsf{M}\), and let \(j:W_L\hookrightarrow X_{[n]}\) be the natural closed embedding into the complete permutohedral variety. Applying \cref{prop:CSM-SNC-pairing} to \(X=W_L\), \(U=U_L\), and the boundary components \(D_{F_1},\ldots,D_{F_k}\), we obtain
    \[
        \deg\!\bigl(c_{\mathrm{SM}}(1_{U_L})\cdot [D_{\mathcal{F}}]\bigr)=\chi(D_{\mathcal{F}}^\circ).
    \]
    Since \(D_{\mathcal{F}}=W_L\cap V(\sigma_{\mathcal{F}})\), we have \(j^*[V(\sigma_{\mathcal{F}})]=[D_{\mathcal{F}}]\). The projection formula then gives
    \[
        \deg\!\bigl(j_*c_{\mathrm{SM}}(1_{U_L})\cdot [V(\sigma_{\mathcal{F}})]\bigr)=\chi(D_{\mathcal{F}}^\circ).
    \]
    As before, the wonderful compactification gives
    \[
        D_{\mathcal{F}}^\circ \cong U_{\mathsf{M}|F_1}\times U_{\mathsf{M}|F_2/F_1}\times\cdots\times U_{\mathsf{M}/F_k}.
    \]
    Therefore
    \[
        \chi(D_{\mathcal{F}}^\circ)=\overline{\chi}_{\mathsf{M}}(1)[\mathcal{F}],
    \]
    because \(\chi(U_{\mathsf{N}})=\overline{\chi}_{\mathsf{N}}(1)\) for every minor \(\mathsf{N}\), which follows from \cite[Corollary~2.2]{Aluffi2013} at \(y=-1\). Finally, for every minor \(\mathsf{N}\),
    \[
        \overline{\chi}_{\mathsf{N}}(1)=(-1)^{\rk(\mathsf{N})-1}\beta(\mathsf{N}),
    \]
    and applying this to each factor in \(\overline{\chi}_{\mathsf{M}}(1)[\mathcal{F}]\) gives the beta-invariant formula. By the identification of Chow classes on \(X_{[n]}\) with Minkowski weights on \(\Sigma_{[n]}\), the degree computed above is \(\operatorname{csm}_k(\mathsf{M})(\mathcal{F})\), proving the formula.
\end{proof}

\section{Tautological bundles on the permutohedral variety}

In this section, we compute the Grothendieck weights of the tautological bundles associated with a matroid and derive a combinatorial identity for the Tutte polynomial.

We follow the notation and conventions of \cite{BergetEurSpinkTseng2023}. For a loopless matroid $\mathsf{M}$ of rank $r$ on $E = [n]$, Berget--Eur--Spink--Tseng define two $K$-classes on $X = X_{[n]}$: the \emph{tautological subbundle} $[\mathcal{S}_{\mathsf{M}}]$ of rank $r$ and the \emph{tautological quotient bundle} $[\mathcal{Q}_{\mathsf{M}}]$ of rank $n - r$. When $\mathsf{M}$ is realized by a linear subspace $L \subset \mathbb{C}^E$, these are the equivariant subbundle and quotient bundle of the trivial bundle $\underline{\mathbb{C}}^E$ determined by $L$.
 
Note that we are working in $K(X_{[n]})$, so the cones are labeled by flags of subsets rather than flags of flats.

We continue to use the successive-minor notation introduced in the introduction.

\tautologicalWeight*
\begin{proof}
    We first compute the total Euler characteristic. By \cite[Theorem~10.1]{BergetEurSpinkTseng2023}, there is a ring isomorphism $\zeta : K(X) \xrightarrow{\sim} A^\bullet(X)$ satisfying $\chi([\mathcal{E}]) = \deg_\alpha(\zeta([\mathcal{E}]))$, where $\alpha \in A^1(X)$ is the hyperplane class. Since $\zeta$ is a ring isomorphism and $\mathcal{S}_{\mathsf{M}}, \mathcal{Q}_{\mathsf{M}}$ have simple Chern roots in the sense of \cite[Proposition~10.5]{BergetEurSpinkTseng2023}, that proposition gives
    \[
        \zeta\!\left(\lambda_u(\mathcal{S}_{\mathsf{M}}^\vee)\right) = (u+1)^r\, c\!\left(\mathcal{S}_{\mathsf{M}}^\vee,\tfrac{u}{u+1}\right)
    \]
    and
    \[
        \zeta\!\left(\lambda_v(\mathcal{Q}_{\mathsf{M}}^\vee)\right) = (v+1)^{n-r}\, c\!\left(\mathcal{Q}_{\mathsf{M}}^\vee,\tfrac{v}{v+1}\right) = (v+1)^{n-r}\, c\!\left(\mathcal{Q}_{\mathsf{M}},-\tfrac{v}{v+1}\right).
    \]
    Therefore
    \[
        \chi\!\left(\lambda_u(\mathcal{S}_{\mathsf{M}}^\vee)\lambda_v(\mathcal{Q}_{\mathsf{M}}^\vee)\right)
        = (u+1)^r(v+1)^{n-r}
          \sum_{i+k+\ell = n-1}
          \left(\int_X \alpha^i c_k(\mathcal{S}_{\mathsf{M}}^\vee)\,c_\ell(\mathcal{Q}_{\mathsf{M}})\right)
          \left(\tfrac{u}{u+1}\right)^k\!\left(-\tfrac{v}{v+1}\right)^\ell.
    \]
    By \cite[Theorem~A]{BergetEurSpinkTseng2023}, this integral equals
    \begin{align*}
        (u+1)^r(v+1)^{n-r}\,t_{\mathsf{M}}\!\left(1,0,\tfrac{u}{u+1},-\tfrac{v}{v+1}\right)
        &= (u+1)^r(v+1)^{n-r}\left(\tfrac{u}{u+1}\right)^r\!\left(1-\tfrac{v}{v+1}\right)^{n-r}
           T_{\mathsf{M}}\!\left(\tfrac{u+1}{u},\,1+v\right) \\
        &= u^r T_{\mathsf{M}}\!\left(1+\tfrac{1}{u},\,1+v\right).
    \end{align*}
    For a general flag $\mathcal{F}$, the Grothendieck weight is $\chi(\lambda_u(\mathcal{S}_{\mathsf{M}}^\vee)\lambda_v(\mathcal{Q}_{\mathsf{M}}^\vee) \cdot x_{\mathcal{F}})$. By the minor decomposition property \cite[Propositions~5.2 and~5.3]{BergetEurSpinkTseng2023}, the tautological bundles restricted to $V(\sigma_{\mathcal{F}})$ decompose as products over the successive minors of $\mathcal{F}$, so applying the same computation to each minor gives
    \[
        u^r T_{\mathsf{M}}\!\left(1+\tfrac{1}{u},\,1+v\right)[\mathcal{F}].
    \]
\end{proof}

Expanding the Tutte polynomial as $u^r T_{\mathsf{M}}(1+1/u,\,1+v) = \sum_{A \subseteq E} u^{r(A)} v^{|A|-r(A)}$ and specializing $v=0$ and $u=0$, respectively, yields the following.

\begin{corollary}\label{cor:gw-taut-specializations}
    Let $\mathsf{M}$ be a loopless matroid on $E = [n]$.
    \begin{enumerate}
        \item Write $I_{\mathsf{M}}(u) = \sum_{I \text{ independent}} u^{|I|}$ for the $f$-polynomial of the independence complex of $\mathsf{M}$. The Grothendieck weight of $\lambda_u(\mathcal{S}_{\mathsf{M}}^\vee)$ is
        \[
            \mathcal{F} \mapsto I_{\mathsf{M}}(u)[\mathcal{F}].
        \]
        \item Write $\operatorname{loop}(\mathsf{N})$ for the number of loops of a matroid $\mathsf{N}$. The Grothendieck weight of $\lambda_v(\mathcal{Q}_{\mathsf{M}}^\vee)$ is
        \[
            \mathcal{F} \mapsto (1+v)^{\operatorname{loop}(\mathsf{M})}[\mathcal{F}].
        \]
        Note that even though $\mathsf{M}$ is loopless, the successive minors of $\mathcal{F}$ may have loops (via contraction), so this formula is nontrivial on non-flat flags.
    \end{enumerate}
\end{corollary}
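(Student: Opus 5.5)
The plan is to specialize \cref{prop:gw-tautological} at $v=0$ and at $u=0$. Since $\lambda_0$ of any class is $1\in K(X)[t]$, setting $v=0$ turns the product $\lambda_u(\mathcal{S}_{\mathsf{M}}^\vee)\lambda_v(\mathcal{Q}_{\mathsf{M}}^\vee)$ into $\lambda_u(\mathcal{S}_{\mathsf{M}}^\vee)$. Setting $u=0$ turns it into $\lambda_v(\mathcal{Q}_{\mathsf{M}}^\vee)$.

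The Grothendieck weight map $\alpha\mapsto g_\alpha$ is $\Z$-linear. It extends coefficientwise to polynomial coefficients and commutes with specialization of the formal variables. So it suffices to specialize the polynomial $u^rT_{\mathsf{M}'}(1+1/u,1+v)$ for each successive minor $\mathsf{M}'$ of $\mathcal{F}$. The product $[\mathcal{F}]$ is multiplicative in the factors, so specialization commutes with forming it.

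For each minor $\mathsf{N}$ of rank $r_{\mathsf{N}}$ on ground set $E'$, I use the corank–nullity expansion
\[
    u^{r_{\mathsf{N}}}T_{\mathsf{N}}(1+1/u,1+v)=\sum_{A\subseteq E'}u^{r(A)}v^{|A|-r(A)}.
\]
This follows directly from $T_{\mathsf{N}}(x,y)=\sum_A(x-1)^{r_{\mathsf{N}}-r(A)}(y-1)^{|A|-r(A)}$ by multiplying through by $u^{r_{\mathsf{N}}}$. The expansion is valid for any matroid, including minors with loops, which matters here because the flags range over arbitrary subsets. It is also a genuine polynomial in $u$ and $v$, so evaluating it at $u=0$ or $v=0$ is legitimate.

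\emph{Case $v=0$.} Only subsets with $|A|=r(A)$ survive, that is, the independent sets. This gives $\sum_{I\text{ indep}}u^{|I|}=I_{\mathsf{N}}(u)$.

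\emph{Case $u=0$.} Only subsets with $r(A)=0$ survive, that is, subsets of the set of loops. This gives $\sum_{A\subseteq\mathrm{loops}}v^{|A|}=(1+v)^{\operatorname{loop}(\mathsf{N})}$.

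Applying these to each factor of $[\mathcal{F}]$ gives both formulas. The only delicate point is checking that the $u=0$ evaluation is legitimate despite the $1/u$ in the formula. This is settled by the expansion above, which shows that $u^rT(1+1/u,1+v)$ is a polynomial in $u$ and $v$.
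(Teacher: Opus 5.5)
Your proposal is correct and matches the paper's argument. The paper also derives both formulas by writing $u^rT_{\mathsf{M}}(1+\tfrac{1}{u},1+v)=\sum_{A\subseteq E}u^{r(A)}v^{|A|-r(A)}$ and specializing \cref{prop:gw-tautological} at $v=0$ and at $u=0$. Your extra remarks make explicit what the paper leaves implicit: you apply the expansion to each successive minor with its own rank, you allow minors with loops, and you note that the expression is a genuine polynomial, so setting $u=0$ is legitimate.
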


Combining \cref{prop:gw-tautological,cor:gw-taut-specializations} with the product rule (\cref{cor:product-rule-general}), we obtain the following Tutte polynomial identity.

\tutteProduct*

\printbibliography

\end{document}